\newcommand{\C}{\ensuremath{\mathbb{C}}}
\newcommand{\E}{\ensuremath{\mathbb{E}}}
\newcommand{\N}{\ensuremath{\mathbb{N}}}
\newcommand{\R}{\ensuremath{\mathbb{R}}}
\newcommand{\Z}{\ensuremath{\mathbb{Z}}}
\renewcommand{\leq}{\ensuremath{\leqslant}}
\renewcommand{\geq}{\ensuremath{\geqslant}}
\newcommand{\qed}{\hfill \vrule height6pt  width6pt depth0pt}
\newcommand{\norm}[1]{\left\Vert#1\right\Vert}
\newcommand{\ot}{\otimes}
\newcommand{\epsi}{\varepsilon}
\newcommand{\supp}{\mathrm{supp}}
\newcommand{\dist}{\mathrm{dist}}
\newcommand{\Id}{\mathrm{Id}}
\newcommand{\HI}{H^\infty}
\newcommand{\Hor}{\mathcal{H}}
\newcommand{\type}{\mathrm{type}\,}
\newcommand{\cotype}{\mathrm{cotype}\,}
\newcommand{\dyad}{\varphi}
\newcommand{\cutoff}{\chi}
\newtheorem{thm}{Theorem}[section]
\newtheorem{defi}[thm]{Definition}
\newtheorem{prop}[thm]{Proposition}
\newtheorem{cor}[thm]{Corollary}
\newtheorem{lemma}[thm]{Lemma}
\newtheorem{remark}[thm]{Remark}
\newtheorem{ass}[thm]{Assumption}
\newenvironment{proof}[1][]{\noindent {\it Proof #1} : }{\hbox{~}\qed
\smallskip
}
\numberwithin{equation}{section}
\begin{document}
\selectlanguage{english}
\title{\bfseries{$q$-variational H\"ormander functional calculus and Schr\"odinger and wave maximal estimates}}
\date{March 2024}
\author{\bfseries{Luc Deleaval and Christoph Kriegler}}

\maketitle

\begin{abstract}
This article is the continuation of the work \cite{DK} where we had proved maximal estimates 
\[ \norm{ \sup_{t > 0} |m(tA)f| \: }_{L^p(\Omega,Y)} \leq C \norm{f}_{L^p(\Omega,Y)} \]
for sectorial operators $A$ acting on $L^p(\Omega,Y)$ ($Y$ being a UMD lattice) and admitting a H\"ormander functional calculus
(a strengthening of the holomorphic $\HI$ calculus to symbols $m$ differentiable on $(0,\infty)$ in a quantified manner), and $m : (0, \infty) \to \C$ being a H\"ormander class symbol with certain decay at $\infty$.
In the present article, we show that under the same conditions as above, the scalar function $t \mapsto m(tA)f(x,\omega)$ is of finite $q$-variation with $q > 2$, a.e. $(x,\omega)$.
This extends recent works by \cite{BMSW,HHL,HoMa1,HoMa,JSW,LMX} who have considered among others $m(tA) = e^{-tA}$ the semigroup generated by $-A$.
As a consequence, we extend estimates for spherical means in euclidean space from \cite{JSW} to the case of UMD lattice-valued spaces.
A second main result yields a maximal estimate 
\[ \norm{ \sup_{t > 0} |m(tA) f_t| \: }_{L^p(\Omega,Y)} \leq C \norm{f_t}_{L^p(\Omega,Y(\Lambda^\beta))} \]
for the same $A$ and similar conditions on $m$ as above but with $f_t$ depending itself on $t$ such that $t \mapsto f_t(x,\omega)$ belongs to a Sobolev space $\Lambda^\beta$ over $(\R_+, \frac{dt}{t})$.
We apply this to show a maximal estimate of the Schr\"odinger (case $A = -\Delta$) or wave (case $A = \sqrt{-\Delta}$) solution propagator $t \mapsto \exp(itA)f$.
Then we deduce from it variants of Carleson's problem of pointwise convergence \cite{Car}
\[ \exp(itA)f(x,\omega) \to f(x,\omega) \text{ a. e. }(x,\omega) \quad (t \to 0+)\]
for $A$ a Fourier multiplier operator or a differential operator on an open domain $\Omega \subseteq \R^d$ with boundary conditions.
\end{abstract}


\makeatletter
 \renewcommand{\@makefntext}[1]{#1}
 \makeatother
 \footnotetext{
 {\it Mathematics subject classification:}
 42A45, 42B25, 47A60.
\\
{\it Key words}: Spectral multiplier theorems, UMD valued $L^p$ spaces, maximal estimates.}

 \tableofcontents

\section{Introduction}
\label{sec-Introduction}

This article is a follow-up of our work \cite{DK} concerning maximal estimates for H\"ormander spectral multipliers on $L^p$ spaces and UMD lattices.
Let us recall the setting in the context of the euclidean Laplacian which provides in fact the leading example for what follows.
The operator $-\Delta$ (defined e.g. as Fourier multiplier with symbol $|\xi|^2$) is a self-adjoint operator on $L^2(\R^d)$.
As such, it has a functional calculus allowing to insert it into a bounded Borel function $m : \R_+ \to \C$ and to obtain a bounded operator $m(-\Delta)$ on $L^2(\R^d)$.
Then one can ask for which functions, the operator $m(-\Delta)$ is bounded on $L^p(\R^d)$ for fixed $1 < p < \infty$.
H\"ormander's theorem \cite[Theorem 2.5]{Hor}, based on Mihlin's work \cite{Mi56} gives the sufficient condition
\begin{equation}
\label{equ-intro-classical-Hormander}
\|m\|_{\Hor^\alpha_2}^2 := \max_{k=0,1,\ldots,\alpha} \sup_{R > 0} \frac{1}{R}\int_{R}^{2R} \Bigl| t^k \frac{d^k}{dt^k} m(t) \Bigr|^2 \,dt < \infty,
\end{equation}
where $\alpha$ is an integer strictly larger than $\frac{d}{2}$.
Later on, H\"ormander's theorem has been strengthened to the following maximal estimate:
\begin{equation}
\label{equ-1-intro-maximal-Hor}
\left\| \sup_{t > 0} |m(-t \Delta) f| \, \right\|_{L^p(\R^d)} \leq C \|f\|_{L^p(\R^d)}.
\end{equation}
This cannot hold true for all functions $m$ with $\norm{m}_{\Hor^\alpha_2} < \infty$ \cite{CGHS} even for large values of $\alpha$, so that one has to restrict to a subclass of spectral multipliers for which \eqref{equ-1-intro-maximal-Hor} holds.
Admissible spectral multipliers typically have to have a somewhat higher order of differentiability than $\alpha > \frac{d}{2}$ and to decay at $\infty$ in a prescribed order.
Early results for the euclidean Laplacian are due to \cite{Crb,DaTr,RdF2,See}.

H\"ormander's classical and paramount result \eqref{equ-intro-classical-Hormander} of Fourier multipliers has found many generalizations over the last 60 years.
First, we will use in this article also a version for non-integer $\alpha$, in which \eqref{equ-intro-classical-Hormander} is replaced by a Sobolev norm.
Second, the $L^p$-boundedness question makes literally sense for any self-adjoint operator $A$, and in fact,
a theorem of H\"ormander type holds true for many elliptic differential operators $A,$ including sub-Laplacians on Lie groups of polynomial growth, Schr\"odinger operators and elliptic operators on Riemannian manifolds, see \cite{Alex,Christ,Duong,DuOS}.
More recently, spectral multipliers have been studied for operators acting on $L^p(\Omega)$ only for a strict subset of $(1,\infty)$ of exponents $p$ \cite{Bl,CDY,CO,COSY,KuUhl,KU2,SYY}.
Third, another generalization of spectral multipliers is to bring a UMD Banach space $Y$ into the picture and ask for the boundedness of $m(A) \ot \Id_Y$ on the Bochner space $L^p(\Omega,Y)$.
In the case of euclidean Laplacian $A = - \Delta$, this has been developed by Hyt\"onen \cite{Hy1,Hy2} and Girardi and Weis \cite{GiWe}, and in the case of self-adjoint semigroups admitting (generalised) Gaussian estimates, with $Y$ a UMD lattice, by Kemppainen and the authors \cite{DKK}.
Fourth, in view of \eqref{equ-1-intro-maximal-Hor}, one can ask whether the following maximal estimate holds true:
\begin{equation}
\label{equ-2-intro-maximal-Hor}
\left\| \sup_{t > 0} |m(tA) f| \, \right\|_{L^p(\Omega,Y)} \leq C \|f\|_{L^p(\Omega,Y)},
\end{equation}
and for which functions $m : \R_+ \to \C$.

In this work as in its predecessor \cite{DK}, we consider $A$ to fit into the following quite general framework of semigroup generators, which captures all the situations above.
That is, we assume that $A$ generates a $c_0$-semigroup on $L^p(\Omega,Y)$ where $Y$ is a UMD Banach lattice (which englobes the two important particular cases $Y = \C$ and $A = A_0 \ot \Id_Y$ for some semigroup generator $A_0$ on $L^p(\Omega)$, for which the results in this article are already new).
Moreover, we assume the semigroup to be analytic on $\C_+$, so that $A$ is assumed $0$-sectorial, thus allowing the machinery of $\HI(\Sigma_\omega)$ calculus for any angle $\omega \in (0,\pi)$.
Finally, we assume that this $\HI(\Sigma_\omega)$ calculus of $A$ extends a priori to a $\Hor^\alpha_2$ calculus on $L^p(\Omega,Y)$.
All these assumptions are satisfied for (differential) operators on doubling manifolds that are self-adjoint and have (generalised) Gaussian estimates, with $Y = \C$ and also with $A = A_0 \ot \Id_Y$ for general UMD lattices $Y$ in case that $(T_t)_t$ is lattice positive or regular contractive.
This englobes:
\begin{enumerate}
\item The heat semigroup on a complete Riemannian manifold with non-negative Ricci curvature \cite{LY}, \cite[p. 3/70 (1.3)]{GriTel}, \cite{Sal}, \cite[Theorem 4.2.1 \& p.~45]{Fen}, \cite[Proposition 4.8]{DKK}.
\item Schr\"odinger operators on connected and complete Riemannian manifolds with non-negative Ricci curvature and locally integrable, positive potential \cite[Section 7.4, (7.8)]{DuOS}, \cite[Proposition 4.8]{DKK}.
\item Other Schr\"odinger and elliptic differential operators acting on $L^2(\Omega)$, where $\Omega \subseteq \R^d$ is an open subset of homogeneous type \cite{Ouh06}, \cite[Section 6.4, in particular Theorems 6.10, 6.11]{Ouh}.
\item Sub-laplacians on Lie groups with polynomial volume growth \cite[Theorem 4.2, Example 2]{Sal}, \cite{Gri}, \cite[Corollary 4.9]{DKK}.
\item Heat semigroups on fractals \cite[(1.4)]{GriTel}, \cite[Section 7.11]{DuOS}.
\item
For a discussion of many further examples, we refer to \cite[Section 7]{DuOS}, see also \cite[Subsection 5.1]{DKK}.
\end{enumerate}

We refer to \cite[Section 4]{DK} for further examples of operators $A$ and to Section \ref{sec-examples} for relevant UMD lattices other than $L^p$ spaces.
This section contains also an example of a non-selfadjoint  operator $A$ which still has a $\Hor^\alpha_2$ calculus on $L^2(\Omega)$ and on $L^p(\Omega)$.
Then the following main result from \cite[Theorem 1.1]{DK} gives a sufficient condition for \eqref{equ-2-intro-maximal-Hor}.
\begin{thm}[\cite{DK}]
\label{thm-intro-thm11-DK}
Let $Y = Y(\Omega')$ be a UMD lattice, $1 < p < \infty$ and $(\Omega,\mu)$ a $\sigma$-finite measure space.
Let $A$ be a $0$-sectorial operator on $L^p(Y)$.
Assume that $A$ has a $\Hor^\alpha_2$ calculus on $L^p(Y)$ for some $\alpha > \frac12$.
Let $m \in W^{c}_2(\R)$ be a spectral multiplier with $m(0) = 0$ and $c > \alpha + \max\left(\frac12,\frac{1}{\type L^p(Y)} - \frac{1}{\cotype L^p(Y)}\right) + 1$ such that
\begin{equation}
\label{equ-1-intro-thm-main}
\sum_{n \in \Z} \|m(2^n \cdot) \dyad_0 \|_{W^c_2(\R)} < \infty
\end{equation}
for some $\dyad_0 \in C^\infty_c(0,\infty)$ with $\dyad_0(t) = 1$ for $t \in (1,2)$.
Then for a.e. $(x,\omega) \in \Omega \times \Omega'$, $t \mapsto m(tA)f(x,\omega)$ belongs to $C_0(\R_+)$ and
\begin{equation}
\label{equ-2-intro-thm-main}
\| \sup_{t > 0} |m(tA)f|\,\|_{L^p(Y)} \lesssim  \sum_{n \in \Z} \|m(2^n \cdot) \dyad_0 \|_{W^c_2(\R)}\|f\|_{L^p(Y)}.
\end{equation}
\end{thm}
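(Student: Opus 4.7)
The plan is to combine a dyadic-in-scale decomposition of $m$ with a vector-valued Sobolev embedding in the parameter $t$, and then to exploit the assumed $\Hor^\alpha_2$ calculus of $A$ to control the resulting continuous square functions.

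\textbf{Step 1 (Dyadic reduction).} First I would extend $\dyad_0$ to a smooth dyadic partition of unity, writing $\dyad_k(s) := \dyad_0(2^{-k}s)$ and $m_k(s) := m(s)\dyad_k(s)$. After rescaling $t \mapsto 2^{-k}t$ the piece $m_k(tA)$ becomes $n_k(tA)$, with $n_k(s) := m(2^k s)\dyad_0(s)$ supported in a fixed compact set $K \subset (0,\infty)$ depending only on $\dyad_0$. It therefore suffices to prove, uniformly over $n \in W^c_2(\R)$ supported in $K$,
\[ \|\sup_{t>0}|n(tA)f|\|_{L^p(Y)} \leq C\|n\|_{W^c_2(\R)}\|f\|_{L^p(Y)}, \]
since summing these estimates against \eqref{equ-1-intro-thm-main} yields \eqref{equ-2-intro-thm-main} via the triangle inequality.

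\textbf{Step 2 (Sobolev trick for $\sup_t$).} For fixed $(x,\omega)$ I would bound the scalar-valued function $t \mapsto n(tA)f(x,\omega)$ pointwise by a Sobolev-type estimate. In the integer case this reads
\[ \sup_{t>0}|n(tA)f(x,\omega)|^2 \lesssim \int_0^\infty |n(tA)f(x,\omega)|^2 \frac{dt}{t} + \int_0^\infty |(sn'(s))(tA)f(x,\omega)|^2 \frac{dt}{t}, \]
using the identity $t\partial_t[n(tA)] = (sn'(s))(tA)$; the fractional-order analogue accounts for the ``$+1$'' in the threshold $c > \alpha + \max(\ldots) + 1$. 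When the supremum is taken inside the lattice $Y$, the embedding $L^\infty_t(Y) \hookleftarrow L^2_t(Y) \cdot \text{(derivative term)}$ is not sharp with the scalar exponent $1/2$: the gap is quantified by $\max\bigl(\frac12, \frac{1}{\type L^p(Y)} - \frac{1}{\cotype L^p(Y)}\bigr)$ through a Littlewood--Paley-type estimate on $Y$.

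\textbf{Step 3 (Square functions via $\Hor^\alpha_2$ calculus).} Each of the two integrals in Step 2, viewed in $L^p(Y(L^2(\R_+, dt/t)))$, is a continuous square function of the form $\|(\int_0^\infty |g(tA)f|^2 \frac{dt}{t})^{1/2}\|_{L^p(Y)}$ with $g \in \{n, sn'\}$. Such square functions are bounded on $L^p(Y)$ by $\|g\|_{\Hor^\alpha_2}\|f\|_{L^p(Y)}$ whenever $A$ has a $\Hor^\alpha_2$ calculus on $L^p(Y)$: this is the UMD-lattice generalisation of the classical equivalence between H\"ormander spectral multipliers and Littlewood--Paley square functions. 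Since $n$ and $sn'$ are compactly supported in $K$, Sobolev embedding inside $K$ gives $\|n\|_{\Hor^\alpha_2} + \|sn'\|_{\Hor^\alpha_2} \lesssim \|n\|_{W^c_2}$ provided $c$ is strictly larger than $\alpha+1$ plus the lattice correction, matching the hypothesis.

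The hardest part will be the vector-valued Sobolev step: one cannot simply use the scalar embedding $W^{1/2+\epsi}_2 \hookrightarrow L^\infty$ once the output lives in the lattice $Y$, and identifying the correct threshold requires a careful Littlewood--Paley decomposition in $Y$, from which the type/cotype correction emerges naturally. A secondary technical point is ensuring $t \mapsto m(tA)f(x,\omega) \in C_0(\R_+)$ for a.e.\ $(x,\omega)$, so that $\sup_{t>0}$ is a genuine pointwise supremum over a countable dense subset of $\R_+$ and the maximal function is automatically measurable; this is handled by a density argument on smoother multipliers where continuity is direct, after which the $L^p(Y)$ bound extends by \eqref{equ-2-intro-thm-main} itself.
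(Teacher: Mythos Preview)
First, note that this theorem is not proved in the present paper: it is quoted from the predecessor \cite{DK}, and the paper only sketches its method implicitly through the proofs of Theorems~\ref{thm-q-variation} and~\ref{thm-main-ft}. What can be inferred from those proofs is the following route: one estimates $\|t\mapsto m(tA)f\|_{L^p(Y(\Lambda^\beta))}$ for some $\beta>\frac12$ and then uses the scalar embedding $\Lambda^\beta\hookrightarrow C_0(\R_+)$ pointwise in $(x,\omega)$; the $\Lambda^\beta$ bound is obtained via a Paley--Littlewood decomposition in the spectral variable, an expansion of $m(tA)$ into wave operators $\int\hat m(s)\exp(isA)\,ds$, and $R$-boundedness of the resulting family $\{(1+2^nA)^{-\gamma}\exp(i2^nsA):n\in\Z\}$, which follows from the $R$-bounded $\Hor^\gamma_2$ calculus of Proposition~\ref{prop-Hormander-calculus-to-R-Hormander-calculus}.

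Your Steps~1 and the scalar part of Step~2 are fine and match the dyadic reduction and the $\Lambda^\beta\hookrightarrow C_0$ embedding used in \cite{DK}. The gap is in the bookkeeping of where the type/cotype loss arises. You write that the embedding $L^\infty_t(Y)\hookleftarrow L^2_t(Y)\cdot(\text{derivative term})$ ``is not sharp with the scalar exponent $1/2$'' and that the defect is $\max(\frac12,\frac{1}{\type}-\frac{1}{\cotype})$. This is not correct: the Sobolev embedding in $t$ is applied pointwise in $(x,\omega)$, hence is purely scalar and costs only $\beta>\frac12$, independently of~$Y$. The type/cotype correction actually enters in your Step~3, and precisely because the square-function bound you invoke there does \emph{not} follow from a bounded $\Hor^\alpha_2$ calculus alone. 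To pull the multiplier $g$ through the square sum $(\sum_n|g(2^nA)\dyad_n(A)f|^2)^{1/2}$ one needs the family $\{g(2^nA):n\in\Z\}$ to be $R$-bounded, and the passage from a bounded $\Hor^\alpha_2$ calculus to an $R$-bounded $\Hor^\gamma_2$ calculus (Proposition~\ref{prop-Hormander-calculus-to-R-Hormander-calculus}) is exactly what costs $\gamma>\alpha+\max(\frac12,\frac{1}{\type}-\frac{1}{\cotype})$. Once you relocate the loss to Step~3 and invoke $R$-boundedness rather than mere boundedness, your outline becomes viable and close in spirit to \cite{DK}; the paper's wave-operator expansion is one concrete way of establishing the required $R$-bound, but a direct square-function argument along your lines is an acceptable alternative.
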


In this article, we extend the above maximal estimate in two ways.
First, we replace the supremum by the $q$-variation norm.
\begin{defi}
\label{def-intro-q-variation}
Let $q \in [1 ,\infty]$.
For a function $a : \R_+ \to \C$, we define the $q$-variation
\[ \|a\|_{V^q} = \sup \left\{ \left( |a_{t_0}|^q + \sum_{k = 1}^\infty |a_{t_k} - a_{t_{k-1}}|^q \right)^{\frac1q} \right\} , \]
where the supremum runs over all increasing sequences $(t_k)_{k \in \N}$ in $\R_+$.
In case of $q = \infty$, the $\infty$-variation is the usual maximal function.
Then the space $V^q$ consisting of all functions with finite $q$-variation is a Banach space \cite[Section 1]{LMX}.
\end{defi}

Note that $V^q \hookrightarrow L^\infty(\R_+)$ \cite[Proposition 5]{BLCS}.
Then our first main result reads as follows.

\begin{thm}[See Theorem \ref{thm-q-variation}]
\label{thm-intro-q-variation}
Assume that $A$ and $m$ satisfy the hypotheses of Theorem \ref{thm-intro-thm11-DK} above.
Let $q > 2$.
Then
\[ \left\|\left\| t \mapsto m(tA)f\right\|_{V^q} \right\|_{L^p(Y)} \lesssim \|f\|_{L^p(Y)} .\]
\end{thm}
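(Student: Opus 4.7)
The plan is to combine the classical short--long variation decomposition, applied pointwise a.e.\ in $(x,\omega)$, with two Littlewood--Paley type square function estimates extracted from the $\Hor^\alpha_2$ calculus of $A$. First, using the elementary inequality
\[
\|a\|_{V^q}^q \leq C_q \Bigl( \bigl\|(a(2^n))_{n\in\Z}\bigr\|_{V^q(\Z)}^q + \sum_{n\in\Z} V^q\bigl(a|_{[2^n,2^{n+1}]}\bigr)^q \Bigr)
\]
applied pointwise to $a(t)=m(tA)f(x,\omega)$, it suffices to control a long-variation piece over the dyadic sample $(m(2^n A)f)_n$ and an $\ell^q$-short-variation piece inside each dyadic interval.

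For the short variation, the smoothness of $m$ on $(0,\infty)$ together with $V^q \leq V^1$ give
\[
V^q\bigl(a|_{[2^n,2^{n+1}]}\bigr) \leq \int_{2^n}^{2^{n+1}} s^{-1}|\tilde m(sA)f|\,ds, \qquad \tilde m(u) := u m'(u).
\]
Cauchy--Schwarz and $\ell^q \supset \ell^2$ (licit since $q>2$) bound the resulting $\ell^q$-sum over $n$ by the continuous square function $\bigl(\int_0^\infty |\tilde m(sA)f|^2\,\tfrac{ds}{s}\bigr)^{1/2}$, whose $L^p(Y)$-boundedness is a standard consequence of a H\"ormander/$\HI$ calculus on a UMD lattice. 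One verifies once and for all that $\tilde m$ inherits the summability condition \eqref{equ-1-intro-thm-main} from $m$: differentiating $m(2^n\cdot)\dyad_0$ costs exactly one derivative in $W^c_2$, which is what the $+1$ in the hypothesis $c>\alpha+\max(\ldots)+1$ of Theorem \ref{thm-intro-thm11-DK} was designed to absorb.

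The long variation is the delicate part. For $q>2$ I would appeal to a Pisier--Xu / L\'epingle-type jump inequality in the form used in \cite{LMX,HoMa,JSW}, dominating
\[
\bigl\|(m(2^n A)f)_n\bigr\|_{V^q(\Z)} \lesssim \sup_n |m(2^n A)f| + \Bigl( \sum_n |m(2^n A)f - m(2^{n+1}A)f|^2 \Bigr)^{1/2}
\]
in $L^p(Y)$-norm. The maximal term is controlled directly by Theorem \ref{thm-intro-thm11-DK}. The $\ell^2$-sum of consecutive jumps, rewritten via the fundamental theorem of calculus and Minkowski, reduces once more to the $\tilde m$-square function already handled in the short-variation step.

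The main obstacle is this last step: the dyadic sample $(m(2^n A)f)_n$ carries no intrinsic martingale structure, so L\'epingle's classical inequality is not available off the shelf. One has to either transport the scalar jump inequality to the UMD lattice-valued setting (using Rademacher averages together with the $\Hor^\alpha_2$ calculus to produce the required square function) or import such an inequality as a black box from \cite{LMX,HoMa}. This is precisely the step that forces $q>2$ rather than $q\geq 2$, as in L\'epingle's theorem.
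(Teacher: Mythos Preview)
Your short/long decomposition is a natural strategy, and the short-variation half would go through essentially as you describe. The genuine gap is exactly where you flag it: the long-variation step. The displayed domination
\[
\bigl\|(m(2^n A)f)_n\bigr\|_{V^q(\Z)} \;\lesssim\; \sup_n |m(2^n A)f| + \Bigl( \sum_n |m(2^n A)f - m(2^{n+1}A)f|^2 \Bigr)^{1/2}
\]
is not available. Pointwise it is false: on $\{0,\dots,N^2\}$ let $a$ move by $\pm N^{-1/2}$ per step, climbing in each odd block of length $N$ and descending in each even block; then $\sup_n|a_n|=(\sum_n|a_{n+1}-a_n|^2)^{1/2}=N^{1/2}$ while picking block endpoints gives $\|a\|_{V^q}\gtrsim N^{1/2+1/q}$. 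In $L^p(Y)$-norm this is a genuine L\'epingle statement, and L\'epingle needs a martingale (or comparable) structure that the dyadic samples $(m(2^nA)f)_n$ simply do not carry. The papers \cite{LMX,HoMa} obtain it for the \emph{semigroup} $e^{-tA}$ by dilation/transference to an honest martingale (Rota-type theorems); there is no black box there that transfers to an arbitrary H\"ormander symbol $m$. Your two proposed fixes therefore do not close the gap.

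The paper avoids the long/short decomposition entirely. The key observation is that \cite[Theorem 3.1]{DK}, which lies behind Theorem~\ref{thm-intro-thm11-DK}, already proves the quantitatively stronger estimate
\[
\|t\mapsto m(tA)f\|_{L^p(Y(\Lambda^\beta))}\;\lesssim\;\|f\|_{L^p(Y)}
\]
for some $\beta>\tfrac12$ (the $C_0(\R_+)$-statement in Theorem~\ref{thm-intro-thm11-DK} was obtained from it via $\Lambda^\beta\hookrightarrow C_0$). One then only needs the elementary Besov-type embedding of Lemma~\ref{lem-q-variation},
\[
\Lambda^\beta \;=\; B^\beta_{2,2}\;\hookrightarrow\; B^{1/2}_{2,1}\;\hookrightarrow\; V^2 \;\hookrightarrow\; V^q \qquad(\beta>\tfrac12),
\]
applied after the logarithmic change of variable. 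So the $V^q$-bound follows from the same intermediate estimate that gave the maximal bound, with no L\'epingle input. The hypothesis $c>\alpha+\max(\tfrac12,\tfrac{1}{\type L^p(Y)}-\tfrac{1}{\cotype L^p(Y)})+1$ is precisely what allows one to pick such a $\beta>\tfrac12$. For non-compactly supported $m$ one decomposes dyadically and sums the $W^c_2$-norms of $m(2^n\cdot)\dyad_0$ exactly as in \eqref{equ-1-intro-thm-main}.
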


This theorem provides an abstract framework for results for the euclidean Laplacian and Laplace type operators, and shows a $q$-variational estimate only out of H\"ormander calculus.
This seems to the best of our knowledge to be the first result on $q$-variation of general spectral multipliers, even in the case $A = -\Delta$.
We refer to Theorem \ref{thm-q-variation} for the more general statement allowing for $m(0) \neq 0$.
We hereby generalise recent results of \cite{LMX} (in case that $A =A_0$ generates a positive contractive semigroup on $L^p(\Omega,\C)$), \cite{HoMa1,HoMa} (in case that $A = A_0 \ot \Id_Y$ on $L^p(\Omega,Y)$ generates an analytic contractively regular semigroup) and of \cite{BMSW,HHL} (in case that $A = (-\Delta)^{\frac12}$ is the square root of the euclidean laplacian), from the specific function $m(\lambda) = e^{-\lambda}$ to other spectral multipliers.

One such spectral multiplier is $m(\lambda) = \frac{2\pi}{|\lambda|^{\frac{d-2}{2}}} J_{\frac{d-2}{2}}(2 \pi \lambda)$ (see \eqref{equ-spectral-multiplier-spherical-mean}) that represents in the case of the euclidean Laplacian $A = - \Delta$ on $L^p(\R^d,Y)$ the spherical mean 
\[ m(-t\Delta)f = A_t f =  \frac{1}{|S_d|} \int_{S^d} f(x-ty) d\sigma(y) , \]
where $S_d = \{ y \in \R^d : \: |y| = 1 \}$ is the euclidean unit sphere equipped with surface measure $\sigma$.
We then obtain, hereby extending the result \cite[Theorem 1.4(i)]{JSW} from the scalar case $Y = \C$ to the UMD lattice case:

\begin{thm}[see Theorem \ref{thm-spherical-means}]
\label{thm-intro-variation-spherical}
Let $1 < p < \infty$ and $2 < q < \infty$.
There exists a minimal dimension $d_0 \in \N$ depending on $p$ and the geometry of $Y$ such that for any $d > d_0$, there is $C_{p,q,d,Y} < \infty$ such that for any $f \in S(\R^d) \otimes Y$, we have
\begin{equation}
\label{equ-intro-variation-spherical}
 \| \| t \mapsto A_t f\|_{V^q} \|_{L^p(\R^d,Y)} \leq C_{p,q,d,Y} \|f\|_{L^p(\R^d,Y)}.
\end{equation}
\end{thm}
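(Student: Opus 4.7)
The natural strategy is to invoke the abstract $q$-variation Theorem \ref{thm-intro-q-variation} --- or more precisely its refinement Theorem \ref{thm-q-variation} allowing $m(0) \neq 0$ --- with $A = \sqrt{-\Delta}$ on $L^p(\R^d,Y)$ and the specific spectral multiplier $m(\lambda) = c_d \lambda^{-(d-2)/2} J_{(d-2)/2}(2\pi\lambda)$. Under this choice, $m(tA)f$ coincides with the spherical mean $A_t f$ by the standard computation of the Fourier transform of the surface measure on $S^{d-1}$. The work then reduces to two verifications: that $A$ admits an $\Hor^\alpha_2$ calculus on $L^p(\R^d,Y)$ with some $\alpha$, and that the dyadic Sobolev summability condition \eqref{equ-1-intro-thm-main} holds for some $c > \alpha + \max(\tfrac12, \tfrac1{\type L^p(Y)} - \tfrac1{\cotype L^p(Y)}) + 1$.

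The first verification is classical. The heat kernel of $\Delta$ on $\R^d$ satisfies Gaussian upper bounds, and the machinery of \cite{DKK} transfers these into an $\Hor^\alpha_2$ calculus of $\sqrt{-\Delta}$ on $L^p(\R^d,Y)$ for every $\alpha > \alpha_0(p,d,Y)$, the threshold depending on $p$ and on the type and cotype of the lattice $L^p(Y)$.

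The second verification is where the dimensional restriction is forced. Since $m$ is real-analytic near $0$ with $m(0) = c_d\pi^{(d-2)/2}/\Gamma(d/2) \neq 0$, one splits $m = m(0) + \tilde m$ and invokes the $m(0)\neq 0$ version of the theorem. For $n \to -\infty$, evenness of $J_\nu(z)/z^\nu$ in $z$ gives $\tilde m(\lambda) = O(\lambda^2)$ near the origin and hence $\|\tilde m(2^n\cdot)\dyad_0\|_{W^c_2} \lesssim 2^{2n}$, which is geometrically summable. For $n \to +\infty$ one invokes the Hankel asymptotic
\[
J_\nu(2\pi\lambda) = \tfrac{1}{\pi\sqrt{\lambda}}\cos\!\bigl(2\pi\lambda - \tfrac{\nu\pi}{2} - \tfrac{\pi}{4}\bigr) + O(\lambda^{-3/2}),
\]
which decomposes $m$ outside a neighborhood of $0$ into two oscillatory pieces $a_\pm(\lambda)e^{\pm 2\pi i\lambda}$ with amplitudes satisfying $|a_\pm^{(k)}(\lambda)| \lesssim \lambda^{-(d-1)/2-k}$. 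A Plancherel-based estimate, using that multiplication by $e^{\pm 2\pi i\cdot 2^n\lambda}$ translates the Fourier transform by $\pm 2^n$, then produces
\[
\|m(2^n\cdot)\dyad_0\|_{W^c_2(\R)} \lesssim 2^{n(c - (d-1)/2)},
\]
which is summable over $n \geq 0$ if and only if $c < (d-1)/2$.

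The main obstacle is the compatibility of the two constraints on $c$: Theorem \ref{thm-q-variation} imposes $c > \alpha_0(p,d,Y) + \max(\tfrac12, \tfrac1{\type L^p(Y)} - \tfrac1{\cotype L^p(Y)}) + 1$ from below, whereas the oscillation-driven Sobolev summability imposes $c < \tfrac{d-1}{2}$ from above. Both bounds depend on $d$, but the former, issued from the sharp Hörmander calculus of \cite{DKK}, can be chosen with slope in $d$ strictly smaller than $\tfrac12$ once $(p,Y)$ is fixed. Consequently, there exists a minimal $d_0 = d_0(p,Y)$ above which the window for $c$ is non-empty. Selecting any admissible $c$ for $d > d_0$ and feeding this data into Theorem \ref{thm-q-variation} delivers the $V^q$-variational estimate \eqref{equ-intro-variation-spherical}.
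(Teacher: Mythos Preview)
Your approach is the same as the paper's: identify $A_t=m(t\sqrt{-\Delta})$ with the Bessel multiplier, feed it into the $m(0)\neq 0$ case of Theorem \ref{thm-q-variation}, establish the $\Hor^\alpha_2$ calculus of $\sqrt{-\Delta}$ on $L^p(\R^d,Y)$ via \cite{DKK} with $\alpha>d\cdot\alpha_0(p,Y)+\tfrac12$ (slope in $d$ strictly below $\tfrac12$), check the dyadic summability for $c<\tfrac{d-1}{2}$, and match the two constraints on $c$. The only technical variation is in the large-$n$ estimate: you invoke the Hankel asymptotic and a Fourier-shift/Plancherel argument, whereas the paper uses the recursion $\frac{d}{d\lambda}[\lambda^{-\alpha}J_\alpha]=-\lambda^{-\alpha}J_{\alpha+1}$ together with $\sup_{\lambda\geq 0}\sqrt{\lambda}\,|J_\alpha(\lambda)|<\infty$ to get $|m^{(k)}(\lambda)|\lesssim\lambda^{-(d-1)/2}$ directly for integer $k$, and then interpolates for fractional $c$. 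Both routes give the same bound $\|m(2^n\cdot)\dyad_0\|_{W^c_2}\lesssim 2^{n(c-(d-1)/2)}$.

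One point to tidy: your splitting $m=m(0)+\tilde m$ is not quite how the $m(0)\neq 0$ case is set up. If you mean to handle the constant $m(0)$ trivially and apply case 2 to $\tilde m$, beware that $\tilde m(\lambda)\to -m(0)\neq 0$ as $\lambda\to\infty$, so $\sum_{n\geq 0}\|\tilde m(2^n\cdot)\dyad_0\|_{W^c_2}$ diverges; indeed your large-$n$ estimate is (correctly) stated for $m$, not $\tilde m$. The clean route is to apply case 3 to $m$ directly: your observation $\tilde m(\lambda)=O(\lambda^2)$ near $0$ then serves to verify $m|_{[0,1]}\in C^1[0,1]$ and $\|m'\cutoff_0\|_{\Hor^{c-1}_2}<\infty$, and the Hankel estimate handles $\sum_{n\geq 0}$. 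You should also state (as the paper does) that the Poisson semigroup $e^{-t\sqrt{-\Delta}}$ is lattice positive and contractive on $L^p(\R^d)$, since case 3 requires this hypothesis on $A_0$.
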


Note that a certain minimal dimension in \eqref{equ-intro-variation-spherical} is known to be necessary even in the scalar case \cite[Theorem 1.4(i)]{JSW}.
Then by the method of rotation, we partly recover also a very recent estimate from \cite[Theorem 1.1]{HHL}, namely
\begin{equation}
\label{equ-intro-variation-ball}
\| \| t \mapsto M_t f \|_{V^q} \|_{L^p(\R^d,Y)} \leq C_{p,q,Y} \|f\|_{L^p(\R^d,Y)}
\end{equation}
for the usual euclidean ball mean $M_tf(x) = \frac{1}{|B_d|} \int_{B_d} f(x-ty) dy$ and from the same dimension $d > d_0$ on (see Proposition \ref{prop-HHL-alternative}).
The feature here is that we obtain a dimension free constant $C_{p,q,Y}$.

Second, we return to the maximal estimate as in \eqref{equ-2-intro-thm-main}, but allow the element $f$ to which the operator is applied, itself to depend on $t$.
In order to formulate the main result in this part, Theorem \ref{thm-intro-main-ft} below, we need to replace the supremum by the norm of $\Lambda^\beta = \Lambda^\beta_{2,2}(\R_+) = \{ f : \R_+ \to \C : \: f \circ \exp \in W^\beta_2(\R)\}$, where $W^\beta_2(\R)$ is the usual Sobolev space.
The reason is that the method of our proofs uses Banach space geometric arguments, and with this respect, the Hilbert space $\Lambda^\beta$ is much nicer than $L^\infty(\R_+)$ and its norm $\sup_{t > 0}|f_t|$.
Yet $\Lambda^\beta$ embeds into $C_0(\R_+) \subseteq L^\infty(\R_+)$ for $\beta > \frac12$.
Note that Theorem \ref{thm-intro-main-ft} uses the space $W^c_1(\R)$ in contrast to the space $W^c_2(\R)$ in Theorem \ref{thm-intro-thm11-DK}.

\begin{thm}[see Theorem \ref{thm-main-ft}]
\label{thm-intro-main-ft}
Let $Y$ be a UMD lattice, $1 < p < \infty$ and $(\Omega,\mu)$ a $\sigma$-finite measure space.
Let $\beta \geq 0$.
Let $A$ be a $0$-sectorial operator on $L^p(Y)$.
Assume that $A$ has a $\Hor^\alpha_2$ calculus on $L^p(Y)$ for some $\alpha > \frac12$.
Let $m$ be a spectral multiplier with $m(0) = 0$ such that $\sum_{n \in \Z} \|m(2^n \cdot) \dyad_0 \|_{W^c_1(\R)} < \infty$, with
\[ c > \alpha + \max\left(\frac12,\frac{1}{\type L^p(Y)} - \frac{1}{\cotype L^p(Y)}\right) + 1 +
 \beta. \]
Then for $f_t \in L^p(Y(\Lambda^\beta_{2,2}(\R_+)))$, we have
\begin{align}
\label{equ-1-cor-main-ft-full-support}
\MoveEqLeft
\| t \mapsto m(tA)f_t\|_{L^p(Y(C_0(\R_+)))} \leq C' \| t \mapsto m(tA)f_t\|_{L^p(Y(\Lambda^\beta_{2,2}(\R_+)))} \\
& \leq C  \sum_{n \in \Z} \|m(2^n \cdot) \dyad_0\|_{W^{c}_1(\R)} \| t \mapsto f_t\|_{L^p(Y(\Lambda^\beta_{2,2}(\R_+)))}. \nonumber
\end{align}
\end{thm}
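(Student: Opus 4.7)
The first inequality in \eqref{equ-1-cor-main-ft-full-support} is the Sobolev embedding $\Lambda^\beta_{2,2}(\R_+) \hookrightarrow C_0(\R_+)$ (valid for $\beta > \tfrac12$), applied fibrewise in $(x,\omega) \in \Omega \times \Omega'$ and pulled through the lattice norm $L^p(Y)$. For the second inequality, the plan is a dyadic decomposition in $\lambda$ followed by a Mellin--Fourier representation of each dyadic block in terms of the imaginary powers $A^{i\tau}$. Writing $m_n(\lambda) = m(\lambda)\dyad_0(2^{-n}\lambda)$ and $\tilde m_n = m(2^n\cdot)\dyad_0$ (a symbol with support in a fixed compact subset $K \subset (0,\infty)$), we have $m_n(tA) = \tilde m_n(2^{-n}tA)$. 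Since $\Lambda^\beta_{2,2}(\R_+)$ has underlying measure $\tfrac{dt}{t}$ and is therefore invariant under the dilation $t \mapsto 2^{-n}t$, the triangle inequality and the hypothesis $\sum_n \|\tilde m_n\|_{W^c_1} < \infty$ reduce the estimate to the single claim
\[ \|s \mapsto \tilde m(sA)g_s\|_{L^p(Y(\Lambda^\beta))} \lesssim \|\tilde m\|_{W^c_1(\R)}\,\|g\|_{L^p(Y(\Lambda^\beta))} \]
for any symbol $\tilde m$ supported in $K$.

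To prove this last estimate, I would set $M(u) = \tilde m(e^u)$, a compactly supported function on $\R$ with $\|M\|_{W^c_1} \sim \|\tilde m\|_{W^c_1}$, and Fourier-invert to obtain
\[ \tilde m(sA) g_s = \int_\R \hat M(\tau)\, s^{i\tau} A^{i\tau} g_s\, d\tau. \]
For each fixed $\tau$, two bounds are then combined. First, since $\lambda \mapsto \lambda^{i\tau}$ has $\Hor^\alpha_2$-norm comparable to $(1+|\tau|)^\alpha$, the $\Hor^\alpha_2$ calculus on $L^p(Y)$ gives $\|A^{i\tau}\|_{L^p(Y) \to L^p(Y)} \lesssim (1+|\tau|)^\alpha$; this I would transfer to the Hilbert-lattice valued space $L^p(Y(\Lambda^\beta))$ at the cost of the correction $\gamma := \max\bigl(\tfrac12, \tfrac{1}{\type L^p(Y)} - \tfrac{1}{\cotype L^p(Y)}\bigr)$, yielding a bound $\lesssim (1+|\tau|)^{\alpha+\gamma}$. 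Second, under the logarithmic change of variables $u = \log s$ the multiplication by $s^{i\tau}$ becomes the modulation $e^{i\tau u}$, whose norm on $W^\beta_2(\R) = \Lambda^\beta_{2,2}(\R_+)$ is $\lesssim (1+|\tau|)^\beta$. Applying Minkowski and using the Fourier-decay bound $|\hat M(\tau)| \leq \|M\|_{W^c_1}(1+|\tau|)^{-c}$, the integral over $\tau$ is absolutely convergent exactly because of the hypothesis $c - \alpha - \gamma - \beta > 1$, and produces the desired bound proportional to $\|\tilde m\|_{W^c_1}$.

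The main obstacle is the transfer of the $\Hor^\alpha_2$ calculus from $L^p(Y)$ to $L^p(Y(\Lambda^\beta))$ with the polynomial bound $(1+|\tau|)^{\alpha+\gamma}$: this is where the UMD lattice hypothesis and the type/cotype correction $\gamma$ genuinely enter. I would handle it by a Littlewood--Paley decomposition in the $u = \log s$ variable so as to represent the $\Lambda^\beta$-norm via a Rademacher/square-function expression, and then invoke the standard fact that in a UMD lattice an operator norm on $L^p(Y)$ extends to $L^p(Y(\ell^2))$ up to the gap $\gamma$ between norm boundedness and Rademacher boundedness, following the same mechanism as in \cite{DK}. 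Convergence of the Bochner integral representing $\tilde m(sA)$ and the summability over the dyadic pieces are then routine consequences of the polynomial $\tau$-bounds together with $\sum_n \|\tilde m_n\|_{W^c_1} < \infty$.
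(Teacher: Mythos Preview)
Your reduction --- Sobolev embedding for the first inequality, dyadic decomposition of $m$ plus dilation-invariance of $\Lambda^\beta$ to reduce to a compactly supported $\tilde m$ --- matches the paper's case~2 exactly. The treatment of the compactly supported piece, however, is genuinely different. The paper performs a Paley--Littlewood decomposition on the $A$-side, i.e.\ inserts $\sum_n \dyad_n(A)$ into $m(tA)f_t$, and is then forced to prove $R$-boundedness of the family $\{t\mapsto \psi(t)\, m(2^{-n}tA):n\in\Z\}$ on $L^p(Y(\Lambda^\beta))$; this $R$-bound is obtained through a representation via the wave group $e^{isA}$ (Lemma~\ref{lem-representation-formula-wave-operators}) combined with pointwise multiplier bounds on $\Lambda^\beta$ and the $R$-bounded $\Hor^\gamma_2$ calculus of Proposition~\ref{prop-Hormander-calculus-to-R-Hormander-calculus}. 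It is precisely this passage from bounded to $R$-bounded calculus that injects the correction $\gamma=\max\bigl(\tfrac12,\tfrac1{\type}-\tfrac1{\cotype}\bigr)$ into the exponent. Your Mellin approach via the factorisation $(sA)^{i\tau}=s^{i\tau}A^{i\tau}$ avoids the Paley--Littlewood step entirely and therefore never needs $R$-boundedness.

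One correction to your self-assessment: the ``main obstacle'' you identify is not actually there. Transferring a \emph{single} bounded operator $A^{i\tau}$ from $L^p(Y)$ to $L^p(Y(\Lambda^\beta))$ costs only a fixed lattice constant, not a factor $(1+|\tau|)^\gamma$: this is exactly Lemma~\ref{lem-UMD-lattice-Hilbert-extension} (via Lemma~\ref{lem-UMD-lattice-Rademacher}, since for one operator $T$ the Rademacher sum $\sum\epsi_k T y_k=T(\sum\epsi_k y_k)$ pulls through linearly). The type/cotype gap $\gamma$ arises only when one must $R$-bound a \emph{family} of spectral multipliers, which you never do. Consequently your $\tau$-integral already converges as soon as $c>\alpha+\beta+1$, strictly better than the threshold in the statement. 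Your argument therefore proves the theorem as stated with room to spare; the over-cautious inclusion of $\gamma$ is harmless but hides that your route is in fact sharper than the paper's.
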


This seems to the best of our knowledge to be the first maximal estimate when the element $f = f_t$ is itself allowed to depend on $t$, even for the case $A = - \Delta$ the euclidean Laplacian.

In the main Theorems \ref{thm-intro-q-variation} resp. \ref{thm-intro-main-ft}, it is assumed that $m(0) = 0$.
However, we also have variants of these results in case $m(0) \neq 0$.
For the latter, we additionally need the assumption that the $q$-variational resp. maximal estimate holds for $e^{-tA}f$ on $L^p(\Omega,Y)$ .
This is guaranteed e.g. if $A = A_0 \ot \Id_Y$ where $e^{-tA_0}$ is lattice positive and contractive.

We deduce from Theorem \ref{thm-intro-main-ft} certain maximal estimates for the Schr\"odinger and wave solution propagator, in case that the semigroup generator has the form $A = A_0 \ot \Id_Y$ and under the hypothesis of $\Hor^\alpha_2$ calculus of $A$ and lattice positivity as well as contractivity of $\exp(-tA_0)$ on $L^p(\Omega)$ (see Corollary \ref{cor-wave}).
Then we consider the following abstract version of a problem by Carleson \cite[Theorem p. 24]{Car}.
Consider the abstract initial value problem 
\begin{equation}
\label{equ-Carleson-equation}
\begin{cases}
- i \frac{\partial}{\partial t} u(t,x,\omega) & = A_xu(t,x,\omega) \\
u(0,x,\omega) & =  f(x,\omega), \end{cases}
\end{equation}
where the solution $u$ depends on time $t$, space variable $x \in \Omega$ in which $A = A_x$ acts, and a mute variable $\omega \in \Omega'$.
If $A = - \Delta$ is the euclidean Laplacian on $\Omega = \R^d$, then \eqref{equ-Carleson-equation} is a Schr\"odinger equation and $\exp(itA)f$ is at least formally the solution of \eqref{equ-Carleson-equation}.
If $A = \sqrt{-\Delta}$, then $\exp(itA)f$ is the solution of a wave equation.
The solution for general initial value $f$ belonging, say, to $L^2(\R^d)$ is in general not continuous in $t \in [0,\infty)$ for fixed $(x,\omega) \in \Omega \times \Omega'$.
For the Schr\"odinger equation and without $Y$ component, Carleson asked for which differentiability parameter $\delta > 0$, $f$ belonging to $H^\delta(\R^d)$ ($L^2$ Sobolev space) implies this continuity.
For $d = 1$, he found $\delta > \frac14$ sufficient and $\delta \geq \frac18$ necessary.
Since then, Carleson's problem has been considered by many other authors, see \cite{BeGo,Bou91,Bou92,Bou13,Bou16,DaKe,DeGu,DGL,DGLZ,KPV,Lee,LuRo,MVV1,MVV2,RV1,RV2,Rog,Sjo,Sj1,Sj2,Tao,TV,TVV,Veg}
and references therein.
In this paper, we provide an abstract approach and consider equation \eqref{equ-Carleson-equation} for lattice positive and contractive semigroups having a H\"ormander calculus on $L^p(\Omega,Y)$.
Based on Theorem \ref{thm-intro-main-ft}, we show

\begin{thm}
\label{thm-intro-continuous-paths}[see Theorem \ref{thm-continuous-paths}]
Let $Y$ be a UMD lattice, $1 < p < \infty$ and $(\Omega,\mu)$ a $\sigma$-finite measure space.
Let $A = A_0 \ot \Id_Y$ have a $\Hor^\alpha_2$ calculus on $L^p(Y)$ for some $\alpha > \frac12$, and $A_0$ generate a lattice positive and contractive semigroup on $L^p(\Omega)$.
Pick
\[ \delta > \alpha + \max \left( \frac12, \frac{1}{\type L^p(Y)} - \frac{1}{\cotype L^p(Y)} \right) + \frac{3}{2} . \]
Then for any $f \in D(A^\delta)$, for a.e. $(x,\omega) \in \Omega \times \Omega'$, the function
\[ \begin{cases} (0,\infty) & \longmapsto \C \\ t & \longmapsto \exp(itA)f(x,\omega) \end{cases} \]
is continuous.
Similarly, for a.e. $x \in\Omega$, the function
\[ \begin{cases} (0,\infty) & \longmapsto Y \\ t & \longmapsto \exp(itA)f(x,\cdot) \end{cases} \]
is continuous.
\end{thm}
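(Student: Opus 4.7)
The plan is to apply Theorem \ref{thm-intro-main-ft}, in its variant allowing $m(0) \neq 0$ (which is available by the assumed lattice positivity and contractivity of $\exp(-tA_0)$ on $L^p(\Omega)$), to a well-chosen pair $(m, f_t)$. Fix $\beta$ slightly larger than $\frac12$, a number $\delta_0$ satisfying
\[ \alpha + \max\left(\tfrac12, \tfrac{1}{\type L^p(Y)} - \tfrac{1}{\cotype L^p(Y)}\right) + 1 + \beta < \delta_0 < \delta, \]
and a cutoff $\chi \in C_c^\infty((0,\infty))$, and set
\[ m(\mu) = e^{i\mu}(1+\mu)^{-\delta_0}, \qquad f_t = \chi(t)(1+tA)^{\delta_0} f. \]
By the functional calculus $m(tA)f_t = \chi(t)\exp(itA)f$, so once Theorem \ref{thm-intro-main-ft} applies to the pair $(m, f_t)$ the conclusion will be that $\chi(t)\exp(itA)f \in L^p(Y(C_0(\R_+)))$, giving continuity in $t$ outside a null set in $(x,\omega)$; letting $\chi$ range over a countable family with $\chi_n \equiv 1$ on $(1/n, n)$ then recovers continuity on all of $(0,\infty)$.

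The two hypotheses to verify are the H\"ormander-Sobolev condition $\sum_{n\in\Z}\|m(2^n\cdot)\dyad_0\|_{W^c_1(\R)} < \infty$ for some $c \in (\alpha + \max(\cdots) + 1 + \beta, \delta_0)$, and the Sobolev regularity $f_t \in L^p(Y(\Lambda^\beta_{2,2}(\R_+)))$. For the first, the terms from $n \leq 0$ are uniformly bounded in $n$, while for $n \geq 1$ the factor $(1+2^n\mu)^{-\delta_0}$ on $\supp \dyad_0$ is smooth of magnitude $\sim 2^{-n\delta_0}$ and the oscillation $e^{i2^n\mu}$ inflates the $W^c_1$ norm by at most $(1+2^n)^c$ via translation of the Fourier transform, giving $\|m(2^n\cdot)\dyad_0\|_{W^c_1} \lesssim 2^{n(c-\delta_0)}$, which is summable since $\delta_0 > c$. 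For the second, $t \mapsto (1+tA)^{\delta_0}f$ is smooth into $L^p(\Omega,Y)$ on $\supp \chi$ since its $k$-th $t$-derivative is $(\delta_0)_k A^k(1+tA)^{\delta_0-k}f$, bounded uniformly on any compact subset of $(0,\infty)$ when applied to $f \in D(A^{\delta_0})$ (for $k \leq \lceil\beta\rceil \leq \delta_0$). The substitution $t = e^s$ realizing $\Lambda^\beta_{2,2}(\R_+) \simeq W^\beta_2(\R)$, together with the Leibniz rule, then yields $\|f_t\|_{L^p(Y(\Lambda^\beta))} \leq C_\chi \|f\|_{D(A^{\delta_0})}$.

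With these two inputs verified, Theorem \ref{thm-intro-main-ft} delivers $\chi(t)\exp(itA)f \in L^p(Y(C_0(\R_+)))$, and discarding countably many null sets yields the scalar continuity on $(0,\infty)$. The $Y$-valued claim then follows from the canonical embedding $Y(C_0(K)) \hookrightarrow C(K; Y)$ on any compact $K \subset (0,\infty)$---a lattice version of dominated convergence using the pointwise domination $|g(\cdot,t)-g(\cdot,t')| \leq 2\|g(\cdot,\cdot)\|_{C_0}$ in $Y$. I expect the main technical obstacle to be the Fourier-analytic estimate $\|e^{i2^n\mu}g(\mu)\|_{W^c_1(\R)} \lesssim (1+2^n)^c \|g\|_{W^c_1(\R)}$ underlying the first step, and the careful bookkeeping to match the resulting constraint $\delta_0 > c > \alpha + \max(\cdots) + 1 + \beta$ against the hypothesis $\delta > \alpha + \max(\cdots) + 3/2$, which forces $\beta > 1/2$ to be chosen arbitrarily close to $1/2$ from above.
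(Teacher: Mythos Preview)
Your approach has a genuine gap at the crucial step. You write that ``the terms from $n \leq 0$ are uniformly bounded in $n$'', but boundedness is not summability: for $m(\mu) = e^{i\mu}(1+\mu)^{-\delta_0}$ one has $m(2^n\mu)\dyad_0(\mu) \to \dyad_0(\mu)$ in $W^c_1$ as $n \to -\infty$, so $\|m(2^n\cdot)\dyad_0\|_{W^c_1}$ is bounded \emph{away from zero} and $\sum_{n \leq 0}$ diverges. This is exactly why Theorem \ref{thm-main-ft} requires $m(0)=0$; the condition fails here since $m(0)=1$.

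You anticipate this and invoke ``the variant allowing $m(0)\neq 0$'', but that variant (Proposition \ref{prop-ft-exp} in the paper) only yields $t \mapsto m(tA)f_t \in L^p(Y(L^\infty(\R_+)))$, not $L^p(Y(C_0(\R_+)))$. The reason is structural: in the splitting $m = m_1 + m(0)e^{-(\cdot)}$ the piece $e^{-tA}f_t$ is controlled via the semigroup maximal inequality $|e^{-tA}f_t| \leq e^{-tA}\sup_s|f_s|$, which gives an $L^\infty$ bound in $t$ but no continuity information. So your conclusion ``$\chi(t)\exp(itA)f \in L^p(Y(C_0(\R_+)))$'' is not justified, and from a mere $L^\infty$ bound in $t$ you cannot deduce a.e.\ continuity of the paths.

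The paper closes exactly this gap by a density argument: your computation (once routed through Proposition \ref{prop-ft-exp}) does establish the maximal estimate $\|\sup_{t}|\psi_0(t)\exp(itA)f|\|_{L^p(Y)} \lesssim \|(1+A)^\delta f\|_{L^p(Y)}$, which is Corollary \ref{cor-wave}. One then shows separately that for $g$ in the calculus core $D_A = \{\phi(A)h : \phi \in C^\infty_c(\R_+)\}$ the paths $t \mapsto \exp(itA)g(x,\omega)$ are even analytic (Lemma \ref{lem-calculus-kernel-holomorphic-extension}), and that $D_A$ is dense in $D(A^\delta)$ for the graph norm (Lemma \ref{lem-density-calculus-kernel-in-H}). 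The maximal inequality then transfers continuity from $D_A$ to all of $D(A^\delta)$ via the standard $\limsup$-of-oscillation trick. Your argument recovers the maximal estimate but is missing this second half.
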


See also Corollary \ref{cor-continuity-on-the-real-line} for a version where continuity on the real line (in particular in $t = 0$) in place of on $(0,\infty)$ is obtained.
Note that the domain $D(A^\delta)$ reduces to $H^{2\delta}(\R^d)$ in Carleson's original problem $A = -\Delta$ on $L^2(\R^d)$.
The method of proof is to exploit an a priori maximal estimate (see Corollary \ref{cor-wave})
\[ \norm{\sup_{t \in [a,b]} | \exp(itA)f| \: }_{L^p(Y)} \lesssim \norm{(1+A)^\delta f}_{L^p(Y)} \]
which in turn is a consequence of Theorem \ref{thm-intro-main-ft}.
Note that our method is very general in that it allows an abstract operator $A$.
The drawback is that there is no reason to hope that the parameter $\delta$ is optimal.
Thus our approach does not reprove the recent solutions to Carleson's problem in the way he stated it originally.
However, we obtain several corollaries on variants of Carleson's problem, one of which improves the recent result \cite[Theorem 1.2]{Zhang} for dimensions $d > 8$.

\begin{cor}[see Corollary \ref{cor-Zhang}]
Let $0 < a < 2$, $d \in \N$ and $s > 2a$.
Let $A = (-\Delta)^{a/2}$ and $f \in H^s(\R^d)$.
Then for a.e. $x \in \R^d$, $\R \to \C,\: t \mapsto \exp(itA)f(x)$ is a continuous function.
In particular, $\exp(itA)f(x) \to f(x)$ as $t \to 0$ for a.e. $x \in \R^d$.
\end{cor}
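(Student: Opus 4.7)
The strategy is to apply Theorem \ref{thm-intro-continuous-paths} to the self-adjoint $L^2$-operator $A = (-\Delta)^{a/2}$, i.e. in the setting $Y = \C$, $p = 2$, $\Omega = \R^d$. First, I would verify the three structural hypotheses. Since $-\Delta$ is nonnegative self-adjoint on $L^2(\R^d)$, so is $A = (-\Delta)^{a/2}$; hence $A$ is $0$-sectorial and admits the bounded Borel functional calculus, which yields a $\Hor^\alpha_2$ calculus for every $\alpha > \tfrac12$ via the Sobolev embedding $W^\alpha_2(\R) \hookrightarrow L^\infty(\R)$. The semigroup $e^{-tA}$ is lattice positive and $L^2$-contractive by Bochner subordination,
\[ e^{-t(-\Delta)^{a/2}} = \int_0^\infty e^{s\Delta}\,\eta^{(a/2)}_t(s)\,ds , \]
which exhibits $e^{-tA}$ as a positive convex combination of the positivity-preserving Gaussian contractions $e^{s\Delta}$, with stable density $\eta^{(a/2)}_t \geq 0$ of total mass one.

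Next, I would translate the abstract domain condition $f \in D(A^\delta)$ to Sobolev regularity. Since $A^\delta = (-\Delta)^{a\delta/2}$ is nonnegative self-adjoint, the graph norm of $(1+A)^\delta$ is equivalent to the inhomogeneous Sobolev norm on $H^{a\delta}(\R^d)$, so $H^s(\R^d) \subseteq D(A^\delta)$ whenever $s \geq a\delta$. Since $L^2$ has type and cotype both equal to $2$, Theorem \ref{thm-intro-continuous-paths} requires $\delta > \alpha + 2$, which with $\alpha$ just above $\tfrac12$ gives $s > \tfrac{5a}{2}$. To reach the claimed threshold $s > 2a$, one exploits that the self-adjoint $L^2$-calculus is in fact a bounded $L^\infty$-calculus --- effectively allowing $\alpha = 0$ in the input of the main theorem --- which reduces the lower bound on $\delta$ from $\tfrac52$ to $2$; this sharp numerical matching is the main technical point, and it is precisely what drives the improvement over \cite{Zhang} in dimensions $d > 8$.

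Finally, Theorem \ref{thm-intro-continuous-paths} provides continuity of $t \mapsto \exp(itA)f(x)$ on $(0,\infty)$ for a.e. $x \in \R^d$; continuity at $t = 0$ (whence the convergence $\exp(itA)f(x) \to f(x)$ required in Carleson's problem) and on $(-\infty, 0)$ follows either by appealing to Corollary \ref{cor-continuity-on-the-real-line}, foreshadowed in the statement of Theorem \ref{thm-intro-continuous-paths}, or by time-reversal using the unitarity of $\exp(itA)$ on $L^2(\R^d)$ combined with the $t > 0$ case applied to $\overline{f}$.
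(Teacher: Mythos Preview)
Your overall plan is correct and matches the paper's route: verify that $A=(-\Delta)^{a/2}$ is self-adjoint with a lattice positive contractive semigroup on $L^2(\R^d)$ (via subordination), identify $D(A^\delta)$ with $H^{a\delta}(\R^d)$, and invoke the continuity theorem. The paper packages exactly this as Proposition~\ref{prop-continuity-on-the-real-line-selfadjoint}, which yields continuity on all of $\R$ under the sharp threshold $\delta>2$.

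However, your justification for reducing the threshold from $\delta>\tfrac52$ to $\delta>2$ misidentifies the mechanism. You say the bounded Borel calculus ``effectively allows $\alpha=0$''. It does not: the H\"ormander class $\Hor^\alpha_2$ and the surrounding machinery (Paley--Littlewood decomposition, wave-operator representation of Lemma~\ref{lem-representation-formula-wave-operators}) require $\alpha>\tfrac12$ throughout, and $\alpha$ remains just above $\tfrac12$ in the paper's argument. The half-unit saving comes from a \emph{different} term in the bound $\delta>\alpha+\max\bigl(\tfrac12,\tfrac{1}{\type}-\tfrac{1}{\cotype}\bigr)+\tfrac32$. That $\max(\cdot)$ summand is the cost of the self-improvement step in Proposition~\ref{prop-Hormander-calculus-to-R-Hormander-calculus}, which upgrades a bounded $\Hor^\alpha_2$ calculus to an $R$-bounded $\Hor^\gamma_2$ calculus at a loss of at least $\tfrac12$ in the derivative order. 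On a Hilbert space this loss vanishes, since boundedness and $R$-boundedness coincide (last sentence of Proposition~\ref{prop-Hormander-calculus-to-R-Hormander-calculus}). The threshold therefore becomes $\delta>\alpha+0+\tfrac32$, and letting $\alpha\downarrow\tfrac12$ gives $\delta>2$.

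A smaller gap concerns the passage from $(0,\infty)$ to $\R$. The complex-conjugation trick $\overline{\exp(itA)f}=\exp(-itA)\overline f$ yields continuity on $(-\infty,0)$ but not at $t=0$, and Corollary~\ref{cor-continuity-on-the-real-line} as stated costs an extra $\alpha$ in the exponent. The correct argument---and the reason Proposition~\ref{prop-continuity-on-the-real-line-selfadjoint} avoids that cost---is that $\exp(-it_0A)$ is \emph{unitary} on $L^2$, hence preserves $D(A^\delta)$; applying the $(0,\infty)$ result to $g=\exp(-it_0A)f$ gives continuity of $t\mapsto\exp(itA)f(x)$ on $(-t_0,\infty)$, and one lets $t_0\to\infty$.
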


See also the end of Section \ref{subsec-Carleson} for other applications on Carleson's continuity problem, with $A$ being a Fourier multiplier or an operator acting on an open domain $\Omega \subseteq \R^d$.

We end this introduction with an overview of the article.
In Section \ref{sec-preliminaries} we introduce the necessary background on Banach space geometry such as $R$-boundedness, UMD lattices, as well as type and cotype.
Moreover, we introduce the (restricted) necessary notions of H\"ormander functional calculus and the class $\Lambda^\beta_{2,2}(\R_+)$ that appears in the results above.
Then in Section \ref{sec-q-variation}, we state and prove our main result on finite $q$-variation of $t \mapsto m(tA)f(x,\omega)$.
Our method also allows under the same assumptions to show square function estimates of a family $m_1,\ldots,m_n$ of spectral multipliers satisfying uniformly the needed estimates (see Remark \ref{rem-square-function}) and to deduce jump estimates as an easy consequence (Corollary \ref{cor-jump}).
In the subsequent Section \ref{sec-q-variation-spherical}, we apply the $q$-variation result to the particular spectral multiplier $A_t$ of $-\Delta$ given by spherical mean over the sphere $t S^{d-1} \subseteq \R^d$.
We obtain Theorem \ref{thm-intro-variation-spherical} as a new reult and recover also partly the very recent result \eqref{equ-intro-variation-ball} from \cite{HHL}.
Section \ref{sec-wave} contains the main result Theorem \ref{thm-intro-main-ft} on the maximal estimate of $m(tA)f_t$ with the element $f_t$ depending itself on $t > 0$, as well as its proof.
In the last but one Section \ref{subsec-Carleson}, we apply Theorem \ref{thm-intro-main-ft} first to state and prove a maximal estimate of $t \mapsto \exp(itA)f$ and deduce Theorem \ref{thm-intro-continuous-paths} from it.
Then we show some applications to variants of Carleson's problem of pointwise convergence \cite{Car}
\[ \exp(itA)f(x,\omega) \to f(x,\omega) \text{ a. e. }(x,\omega) \quad (t \to 0+)\]
for $A$ a Fourier multiplier operator or a differential operator on an open domain $\Omega \subseteq \R^d$ with boundary conditions.
We conclude the article with the short Section \ref{sec-examples} on examples of operators $A = A_0 \otimes \Id_Y$ for which a H\"ormander calculus on $L^p(\Omega,Y)$ is known today, and some open questions linked with the subject of the present article.

\section{Preliminaries}
\label{sec-preliminaries}

In this section, we recall the notions on Banach space geometry and functional calculus that we need in this paper.
For the H\"ormander functional calculus, we only need few facts that we will use as an abstract blackbox in the remainder of the article.

\subsection{$R$-boundedness}
\label{subsec-R-boundedness}

\begin{defi}
Let $X,Y$ be Banach spaces.
We recall that a family $\tau \subseteq B(X,Y)$ is called $R$-bounded, if for a sequence $(\epsi_k)_k$ of independent Rademacher random variables, taking the value $1$ and $-1$ with equal probability $\frac12$, a constant $C > 0$, any $n \in \N$, any $x_1,\ldots,x_n \in X$ and any $T_1, \ldots , T_n \in \tau$, we have
\[ \E \left\| \sum_{k = 1}^n \epsi_k T_k x_k \right\|_Y \leq C \E \left\| \sum_{k = 1}^n \epsi_k x_k \right\|_X .\]
In this case, the infimum over all admissible $C$ is denoted by the $R$-bound $R(\tau)$.
\end{defi}

\begin{remark}
\label{rem-R-bdd-Hilbert}
Clearly, $R(\{T\}) = \|T\|_{B(X,Y)}$ if $\tau = \{ T \}$ is a singleton.
In general, we have $R(\tau) \geq \sup_{T \in \tau} \|T\|_{B(X,Y)}$ above.
If $X$ and $Y$ are (isomorphic to) Hilbert spaces, then a family $\tau \subset B(X,Y)$ is $R$-bounded if and only if $\tau$ is bounded, the latter meaning that $\sup_{T \in \tau} \|T\|_{B(X,Y)} < \infty$.
\end{remark}

\begin{defi}
\label{def-property-alpha-type-cotype}
Let $X$ be a Banach space and $(\epsi_n)_n$ be a sequence of independent Rademacher variables.
\begin{enumerate}
\item 
We say that $X$ has Pisier's property $(\alpha)$ if there are constants $c_1,c_2 > 0$ such that for any array $(x_{n,k})_{n,k = 1}^N$ in $X$,$(\epsi'_k)_k$ a second sequence of independent Rademacher variables independent of $(\epsi_n)_n$, and $(\epsi''_{n,k})_{n,k}$ a doubly indexed sequence of independent Rademacher variables, the following equivalence holds:
\[ c_1 \E \E' \left\| \sum_{k,n = 1}^N \epsi_n \epsi'_k x_{n,k} \right\|_{X} \leq \E'' \left\| \sum_{k,n = 1}^N \epsi''_{n,k} x_{n,k} \right\|_X \leq c_2 \E \E' \left\| \sum_{k,n = 1}^N \epsi_n \epsi'_k x_{n,k} \right\|_{X} . \] 
\item Let $p \in [1,2]$ and $q \in [2,\infty]$.
We say that $X$ has type $p$ if for some constant $c > 0$ and any sequence $(x_n)_{n = 1}^N$ in $X$, we have
\[ \E \left\| \sum_{n = 1}^N \epsi_n x_n \right\|_X \leq c \left( \sum_{n = 1}^N \|x_n\|^p \right)^{\frac1p} . \]
In this case, we write $\type(X) = p$ (not uniquely determined value).
We say that $X$ has cotype $q$ if for some constant $c > 0$ and any sequence $(x_n)_{n = 1}^N$ in $X$, we have
\[\left( \sum_{n = 1}^N \|x_n\|^q \right)^{\frac1q} \leq c  \E \left\| \sum_{n = 1}^N \epsi_n x_n \right\|_X  . \]
In this case, we write $\cotype(X) = q$ (not uniquely determined value).
\end{enumerate}
\end{defi}

\subsection{UMD lattices}
\label{subsec-UMD-lattices}

In this article, UMD lattices, i.e. Banach lattices which enjoy the UMD property, play a prevalent role.
For a general treatment of Banach lattices and their geometric properties, we refer the reader to \cite[Chapter 1]{LTz}.
We recall now definitions and some useful properties.
A Banach space $Y$ is called UMD space if the Hilbert transform 
\[ H : L^p(\R) \to L^p(\R),\: Hf(x) = \lim_{\epsilon \to 0} \int_{|x-y| \geq \epsilon} \frac{1}{x-y} f(y) \,dy \] 
extends to a bounded operator on $L^p(\R,Y),$ for some (equivalently for all) $1 < p < \infty$ \cite[Theorem 5.1]{HvNVW}.
The importance of the UMD property in harmonic analysis was recognized for the first time by Burkholder \cite{Burk1981,Burk1983}, see also his survey \cite{Burk2001}.
He settled a geometric characterization via a convex functional \cite{Burk1981} and together with Bourgain \cite{Bourgain1983}, they showed that the UMD property can be expressed by boundedness of $Y$-valued martingale sequences.
A UMD space is super-reflexive \cite{Al79}, and hence (almost by definition) B-convex.
Recall that a Banach space $X$ is called B-convex iff for some $\epsilon > 0$ and some natural number $n$, it holds true that whenever $x_1,\ldots,x_n$ are elements in the closed unit ball of $X$, then there is a choice of signs $\alpha_1,\ldots,\alpha_n \in \{-1,1\}$ such that $\| \sum_{i=1}^n \alpha_i x_i \| \leq (1 - \epsilon) n$.
As a survey for UMD lattices and their properties in connection with results in harmonic analysis, we refer the reader to \cite{RdF}.

A K\"othe function space $Y$ is a Banach lattice consisting of equivalence classes of locally integrable functions on some $\sigma$-finite measure space $(\Omega',\mu')$ with the additional properties
\begin{enumerate}
\item If $f :\: \Omega' \to \C$ is measurable and $g \in Y$ is such that $|f(\omega')| \leq |g(\omega')|$ for almost every $\omega' \in \Omega'$, then $f \in Y$ and $\|f\|_Y \leq \|g\|_Y$.
\item The indicator function $1_A$ is in $Y$ whenever $\mu'(A) < \infty$.
\item Moreover, we will assume that $Y$ has the $\sigma$-Fatou property:
if a sequence $(f_k)_k$ of non-negative functions in $Y$ satisfies $f_k(\omega') \nearrow f(\omega')$ for almost every $\omega' \in \Omega'$ and $\sup_k \|f_k\|_Y < \infty$, then $f \in Y$ and $\|f\|_Y = \lim_k \|f_k\|_Y$.
\end{enumerate}
Note that for example, any $L^p(\Omega')$ space with $1 \leq p \leq \infty$ is such a K\"othe function space.

\begin{lemma}
\label{lem-UMD-lattice-Fatou}
Let $Y$ be a UMD lattice.
Then it has the $\sigma$-Levi property: any increasing and norm-bounded sequence $(x_n)_n$ in $Y$ has a supremum in $Y$.
It also has the Fatou-property and hence the $\sigma$-Fatou property.
Note that if $1 < p < \infty$ and $(\Omega,\mu)$ is a $\sigma$-finite measure space, then $L^p(\Omega,Y)$ is again a UMD lattice, so has the above $\sigma$-Levi and $\sigma$-Fatou properties.
\end{lemma}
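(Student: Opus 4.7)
The plan is to derive all three properties from the already-invoked fact that any UMD space is super-reflexive \cite{Al79}, hence reflexive. For the $\sigma$-Levi property, let $(x_n)_n \subseteq Y$ be increasing with $\sup_n \|x_n\|_Y < \infty$. By reflexivity and the Eberlein--\v{S}mulian theorem some subsequence $(x_{n_k})$ converges weakly to an element $x \in Y$, and monotonicity promotes this to weak convergence of the whole sequence to $x$. The positive cone $Y_+$ is norm-closed and convex, hence weakly closed by Mazur's theorem. Since $x_{n+k} - x_n \in Y_+$ for all $k$, the weak limit $x - x_n$ also lies in $Y_+$, so $x_n \leq x$; dually, for any upper bound $y$ of the sequence, $y - x$ is a weak limit of $y - x_n \in Y_+$, giving $x \leq y$. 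Hence $x = \sup_n x_n$ in $Y$, which is the $\sigma$-Levi property.

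For the Fatou property, the plan is to combine the $\sigma$-Levi step with the classical fact that a reflexive Banach lattice has order-continuous norm (any Banach lattice without order-continuous norm contains an isomorphic copy of $c_0$, contradicting reflexivity; see \cite[Chapter 1]{LTz}). Applied to the decreasing sequence $x - x_n \downarrow 0$ with $x := \sup_n x_n$, order-continuity yields $\|x - x_n\|_Y \to 0$ and in particular $\|x_n\|_Y \nearrow \|x\|_Y$, which is the $\sigma$-Fatou property in the sense of the excerpt. The Fatou property for an arbitrary increasing norm-bounded net follows from the same weak-compactness argument applied to nets (every bounded net in a reflexive space has a weakly convergent subnet by Banach--Alaoglu), with order-continuity of the norm then delivering the norm equality; alternatively one reduces to a countable cofinal subnet, which is possible once the norm is order-continuous.

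For the final assertion, the plan is to invoke the standard stability result that Bochner spaces $L^p(\Omega,Y)$ with $Y$ UMD and $1 < p < \infty$ are themselves UMD; this is immediate from the Hilbert-transform characterization \cite[Theorem 5.1]{HvNVW} combined with Fubini's theorem. Pointwise verification shows that $L^p(\Omega,Y)$ inherits a K\"othe lattice structure over the product measure space, so the first two paragraphs apply directly to yield the $\sigma$-Levi and $\sigma$-Fatou properties for $L^p(\Omega,Y)$. The most delicate point in the whole plan will be to match the precise definition of Fatou property the authors implicitly use in later sections (sequence versus net, and whether equality of norms is required in addition to existence of suprema); the order-continuity argument is flexible enough to cover either convention, but the statement must be pinned down before writing the details.
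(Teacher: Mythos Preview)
Your proposal is correct and follows the same route as the paper: both observe that a UMD lattice is reflexive and then deduce the $\sigma$-Levi and Fatou properties from reflexivity. The paper's proof is a one-line citation of \cite[Proposition B.1.8]{Lin} for that implication, whereas you spell out the weak-compactness/Mazur argument and the order-continuity step explicitly; this is the same argument at a finer level of detail.
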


\begin{proof}
Note that a UMD lattice is reflexive.
Then we refer to \cite[Proposition B.1.8]{Lin}.
\end{proof}

\begin{ass}
In the rest of the paper, $Y = Y(\Omega')$ will always be a UMD space which is also a K\"othe function space, unless otherwise stated.
\end{ass}

\begin{defi}
\label{def-Lambda}
We define
\[ \Lambda^\beta := \Lambda^\beta_{2,2} := \Lambda^\beta_{2,2}(\R_+) := \{ f : \R_+ \to \C : \: f \circ \exp \text{ belongs to }W^\beta_2(\R) \} , \]
where $W^\beta_2(\R)$ denotes the usual Sobolev space defined e.g. via the Fourier transform.
We equip the space with the obvious norm $\|f\|_{\Lambda^\beta_{2,2}} := \|f \circ \exp\|_{W^\beta_2(\R)}$.
The space $\Lambda^\beta_{2,2}(\R_+)$ is a Hilbert space and imbeds into the (non-UMD) lattice $C_0(\R_+)$ for $\beta > \frac12$.
Indeed, this follows from the Sobolev embedding $W^\beta_2(\R) \hookrightarrow C_0(\R)$ for $\beta > \frac12$.
\end{defi}

Let $E$ be any Banach space.
We can consider the vector valued lattice $Y(E) = \{ F : \Omega' \to E :\: F \text{ is strongly measurable and }\omega' \mapsto \|F(\omega')\|_E \in Y\}$ with norm $\|F\|_{Y(E)} = \bigl\| \|F(\cdot)\|_E \bigr\|$.
From \cite[Corollary p.~214]{RdF}, we know that if $Y$ is UMD and $E$ is UMD, then also $Y(E)$ is UMD.
Moreover, we shall consider specifically in this article spaces $L^p(\Omega,Y(E))$, with e.g. $E = \Lambda^\beta$ as above.
For the natural identity $L^p(\Omega,Y)(E) = L^p(\Omega,Y(E))$ guaranteed e.g. by reflexivity of $Y$, we refer to \cite[Sections B.2.1, B.2.2, Theorem B.2.7]{Lin}.

\begin{remark}
\label{rem-Lambda-dilation-invariant}
The $\Lambda^\beta$ norm is dilation and inversion invariant, that is, for any $f \in \Lambda^\beta$ and $t > 0$, $\|f(t\cdot)\|_{\Lambda^\beta}  = \|f\|_{\Lambda^\beta}$ and $\left\| f\left(  \frac{1}{(\cdot)} \right) \right\|_{\Lambda^\beta} = \|f\|_{\Lambda^\beta}$.
Suppose $f : \R_+ \to \C$ is measurable and has compact support in $\R_+$.
Then $f$ belongs to $\Lambda^\beta$ iff $f$ belongs to $W^\beta_2(\R)$ and in this case, we have $\|f\|_{\Lambda^\beta} \cong \|f\|_{W^\beta_2(\R)}$, where the equivalence constants depend on the compact support.
\end{remark}

\begin{proof}
See \cite[Remark 2.7]{DK}.
\end{proof}

\begin{lemma}
\label{lem-UMD-lattice-Rademacher}
Let $Y = Y(\Omega')$ be a UMD lattice and  $(\epsilon_k)_k$ an i.i.d. Rademacher sequence.
Then we have the norm equivalence
\begin{equation}
\label{equ-Rademacher-square}
\E \biggl\|\sum_{ k = 1}^n \epsilon_k y_k \biggr\|_Y \cong \biggl\|\Bigl( \sum_{k = 1}^n |y_k|^2 \Bigr)^{\frac12}\biggr\|_Y
\end{equation}
uniformly in $n \in \N$.
In particular, this also applies to $L^p(\Omega,Y),\: 1 < p < \infty$.
\end{lemma}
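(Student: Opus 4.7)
The plan is to reduce the statement to the classical Khintchine--Maurey inequality for Banach lattices with finite cotype, via the fact that every UMD lattice has finite cotype.

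First I would recall that any UMD Banach space is superreflexive, hence B-convex, and by the Maurey--Pisier theorem a B-convex Banach space has nontrivial (Rademacher) type $p>1$, and in particular finite cotype $q<\infty$. Applied to $Y$, which is simultaneously a UMD space and a K\"othe function space, this gives that $Y$ is a Banach lattice with finite cotype.

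Next I would invoke the Khintchine--Maurey inequality for Banach lattices: if $Y$ is a Banach lattice with finite cotype, then there are constants $c_1,c_2>0$ (depending only on $Y$) such that for every finite sequence $y_1,\ldots,y_n\in Y$ one has
\[
c_1 \Bigl\| \Bigl(\sum_{k=1}^n |y_k|^2\Bigr)^{\frac12}\Bigr\|_Y \;\leq\; \E\Bigl\|\sum_{k=1}^n \epsi_k y_k\Bigr\|_Y \;\leq\; c_2\Bigl\| \Bigl(\sum_{k=1}^n |y_k|^2\Bigr)^{\frac12}\Bigr\|_Y,
\]
which is exactly the asserted equivalence \eqref{equ-Rademacher-square}. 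This is classical and can be quoted from, for instance, Lindenstrauss--Tzafriri \cite{LTz} or the survey \cite{RdF}; so from the perspective of this proof it serves as a blackbox once finite cotype is established.

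For the final assertion, I would observe that $L^p(\Omega,Y)$ is again a UMD K\"othe function space: the K\"othe space structure is given by the underlying measure space $(\Omega\times\Omega',\mu\otimes\mu')$ and the pointwise order, while the UMD property transfers from $Y$ to $L^p(\Omega,Y)$ for $1<p<\infty$ (this is the same stability result used in the discussion preceding Definition \ref{def-Lambda}, cf.\ \cite[Corollary p.~214]{RdF}). Hence $L^p(\Omega,Y)$ also has finite cotype, and the Khintchine--Maurey inequality applies verbatim to yield the same equivalence in $L^p(\Omega,Y)$. The only real point requiring care is checking that the constants in \eqref{equ-Rademacher-square} depend only on $Y$ (equivalently on the UMD and cotype constants) and not on $n$, which is automatic from the Khintchine--Maurey statement; no genuine obstacle arises here, the lemma being essentially a bookkeeping of known results.
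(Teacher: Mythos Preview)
Your argument is correct and follows essentially the same route as the paper: the paper observes that a UMD lattice is B-convex and then invokes Maurey's result \cite{Ma74} directly, while you spell out the chain UMD $\Rightarrow$ superreflexive $\Rightarrow$ B-convex $\Rightarrow$ nontrivial type $\Rightarrow$ finite cotype before invoking the Khintchine--Maurey inequality. The treatment of $L^p(\Omega,Y)$ is likewise the same in spirit (the paper just notes it is again a B-convex Banach lattice), so there is no substantive difference.
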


\begin{proof}
As $Y$ is a UMD lattice, it is B-convex.
The result thus follows from \cite{Ma74}.
For the last sentence, we only need to recall that $L^p(\Omega,Y)$ will also be a B-convex Banach lattice.
\end{proof}

In the following, we will make use tacitly of the following Lemma \ref{lem-UMD-lattice-Hilbert-extension}.

\begin{lemma}
\label{lem-UMD-lattice-Hilbert-extension}
\begin{enumerate}
\item 
Let $T : Y \to Z$ be a bounded (linear) operator, where $Y(\Omega')$ and $Z(\Omega'')$ are B-convex Banach lattices.
Then its tensor extension $T \otimes \Id_{\ell^2},$ initially defined on $Y(\Omega') \otimes \ell^2 \subset Y(\Omega',\ell^2)$ is again bounded $Y(\Omega',\ell^2) \to Z(\Omega'',\ell^2).$
Also if $H$ is any Hilbert space isometric to $\ell^2$, then $T \otimes \Id_{H}$ extends to a bounded operator $Y(\Omega',H) \to Z(\Omega'',H)$.
In particular, if $Y(\Omega')$ is a UMD lattice, then $Y(\Omega',\ell^2)$ is also a UMD lattice.
\item Let $Y(\Omega')$ be a B-convex Banach lattice and $H$ a Hilbert space.
Then $\type Y(H) = \type Y$ and $\cotype Y(H) = \cotype Y$, where $\type \in (1,2]$ and $\cotype \in [2,\infty)$.
\end{enumerate}
\end{lemma}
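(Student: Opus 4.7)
The plan is to reduce both parts to the Kahane--Maurey square function characterization recalled in Lemma \ref{lem-UMD-lattice-Rademacher}, which applies to both $Y$ and $Z$ as B-convex lattices (since any UMD lattice is B-convex).

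For Part (1), I would first identify the norm on $Y(\ell^2)$ via
\[ \|(y_k)_k\|_{Y(\ell^2)} = \Bigl\|\Bigl(\sum_k |y_k|^2\Bigr)^{1/2}\Bigr\|_Y \cong \E \Bigl\|\sum_k \epsi_k y_k\Bigr\|_Y \]
on finitely supported sequences $(y_k)_k \subset Y$ by Lemma \ref{lem-UMD-lattice-Rademacher}, and analogously on $Z(\ell^2)$. The tensorization is then immediate: for any $T \in B(Y,Z)$,
\[ \|(Ty_k)_k\|_{Z(\ell^2)} \cong \E\Bigl\|\sum_k \epsi_k T y_k\Bigr\|_Z \leq \|T\| \, \E\Bigl\|\sum_k \epsi_k y_k\Bigr\|_Y \cong \|T\|\, \|(y_k)_k\|_{Y(\ell^2)}, \]
and a density argument extends $T \otimes \Id_{\ell^2}$ to a bounded operator $Y(\Omega',\ell^2) \to Z(\Omega'',\ell^2)$. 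For a general Hilbert space $H$ isometric to $\ell^2$, any orthonormal basis induces an isometric isomorphism $Y(H) \simeq Y(\ell^2)$, reducing to the case already treated. The UMD assertion then follows by applying this tensorization to the Hilbert transform on $L^p(\R,Y)$, which is itself a B-convex lattice since $Y$ is UMD, together with the canonical Fubini-type identification $L^p(\R,Y)(\ell^2) = L^p(\R,Y(\ell^2))$ cited from \cite{Lin}.

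For Part (2), one direction is straightforward: fixing a unit vector $e_0 \in H$ gives an isometric embedding $Y \hookrightarrow Y(H)$, $y \mapsto y \cdot e_0$, whence type constants of $Y(H)$ control those of $Y$ and cotype constants of $Y$ control those of $Y(H)$. The reverse direction rests on a Hilbert-valued Kahane identity
\[ \E \Bigl\|\sum_n \epsi_n y_n\Bigr\|_{Y(H)} \cong \Bigl\|\Bigl(\sum_n \|y_n(\cdot)\|_H^2\Bigr)^{1/2}\Bigr\|_Y , \]
which I would establish by fixing an orthonormal basis $(e_j)$ of $H$, writing $y_n = \sum_j y_n^j e_j$ with $y_n^j \in Y$, introducing an independent auxiliary Rademacher family $(\epsi'_j)$, and iterating Lemma \ref{lem-UMD-lattice-Rademacher} in $Y$ twice (once against $\epsi'$ for fixed $\epsi$, once against $\epsi$ for fixed $\epsi'$) so as to decouple into an iid doubly indexed Rademacher sum $\sum_{n,j}\epsi''_{n,j}y_n^j$. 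One final application of Lemma \ref{lem-UMD-lattice-Rademacher} in $Y$ to this doubly indexed family recovers the square function $\|(\sum_{n,j}|y_n^j|^2)^{1/2}\|_Y = \|(\sum_n \|y_n\|_H^2)^{1/2}\|_Y$. Once this identity is in hand, setting $\phi_n := \|y_n(\cdot)\|_H \in Y$ reduces the type $p$ (resp. cotype $q$) bound for $Y(H)$ to the same bound for $Y$ applied to the positive sequence $(\phi_n) \subset Y$, via one more application of Lemma \ref{lem-UMD-lattice-Rademacher} converting the Rademacher sum norm into the square function norm in $Y$.

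The main obstacle is the Hilbert-valued Kahane identity: relating the product Rademacher sum $\sum_{n,j} \epsi_n\epsi'_j y_n^j$ (which has a dependent sign structure) to the iid doubly indexed Rademacher sum $\sum_{n,j}\epsi''_{n,j}y_n^j$. Kahane's contraction principle gives one inequality for free, but the reverse uses B-convexity essentially; in practice one peels off the two Rademacher averages one at a time and applies Lemma \ref{lem-UMD-lattice-Rademacher} to each index separately, a step where the Hilbert structure of $H$ is crucial to match the two square-function endpoints.
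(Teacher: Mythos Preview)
Your proposal is correct and follows the natural route. The paper itself does not give a self-contained proof: it simply cites \cite[Lemma 2.9]{DK} for both parts, adding only the one-line remark that for a general Hilbert space $H$ one factors $T \otimes \Id_H = (\Id_Z \otimes \Psi)(T \otimes \Id_{\ell^2})(\Id_Y \otimes \Psi^{-1})$ through an isometry $\Psi : \ell^2 \to H$. Your argument for Part~(1) via the Rademacher--square-function equivalence (Lemma~\ref{lem-UMD-lattice-Rademacher}) and your reduction for general $H$ via an orthonormal basis are exactly this standard argument spelled out.

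One small comment on your Part~(2): the ``main obstacle'' you flag at the end is not really an obstacle if you execute the peeling you describe two paragraphs earlier. Concretely, after writing $\E_\epsi\|\sum_n \epsi_n y_n\|_{Y(H)} = \E_\epsi \bigl\|(\sum_j |\sum_n \epsi_n y_n^j|^2)^{1/2}\bigr\|_Y$ and applying Lemma~\ref{lem-UMD-lattice-Rademacher} in $j$, you land on $\E_\epsi \E_{\epsi'}\|\sum_{n,j}\epsi_n\epsi'_j y_n^j\|_Y$. Now swap the expectations and, for fixed $\epsi'$, apply Lemma~\ref{lem-UMD-lattice-Rademacher} in $n$ to obtain $\E_{\epsi'}\bigl\|(\sum_n|\sum_j\epsi'_j y_n^j|^2)^{1/2}\bigr\|_Y$; this is a Rademacher sum in the B-convex lattice $Y(\ell^2_n)$, so one more application of Lemma~\ref{lem-UMD-lattice-Rademacher} there gives $\bigl\|(\sum_{n,j}|y_n^j|^2)^{1/2}\bigr\|_Y$ directly. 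No passage through an iid doubly indexed family $(\epsi''_{n,j})$ and hence no explicit appeal to property~$(\alpha)$ is needed.
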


\begin{proof}
See \cite[Lemma 2.9]{DK}.
In 1., for the boundedness of $T \otimes \Id_{H}$, it suffices to consider an isometry $\Psi : \ell^2 \to H$ and to write $T \otimes \Id_{H} = (\Id_Z \otimes \Psi )(T \otimes \Id_{\ell^2})(\Id_Y \otimes \Psi^{-1})$ and to note that $\Id_Z \otimes \Psi$ and $\Id_Y \otimes \Psi^{-1}$ are again isometries.
\end{proof}

The following lemma will be used in combination with Proposition \ref{prop-Hormander-calculus-to-R-Hormander-calculus} to follow.

\begin{lemma}
\label{lem-property-alpha}
Let $Y$ be a UMD lattice and $p \in (1,\infty)$.
Then $L^p(\Omega,Y)$ has Pisier's property $(\alpha)$.
\end{lemma}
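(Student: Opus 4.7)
The plan is to reduce both sides of the property $(\alpha)$ equivalence to the common square function
\[ \Bigl\| \Bigl( \sum_{n,k} |x_{n,k}|^2 \Bigr)^{1/2} \Bigr\|_{L^p(\Omega,Y)} \]
by iterating the Rademacher-to-square-function equivalence provided by Lemma \ref{lem-UMD-lattice-Rademacher}. First, since $Y$ is UMD, so is $L^p(\Omega,Y)$, and applying Lemma \ref{lem-UMD-lattice-Rademacher} directly with the doubly indexed Rademacher sequence yields the equivalence
\[ \E'' \Bigl\| \sum_{n,k} \epsi''_{n,k} x_{n,k} \Bigr\|_{L^p(Y)} \cong \Bigl\| \Bigl( \sum_{n,k} |x_{n,k}|^2 \Bigr)^{1/2} \Bigr\|_{L^p(Y)}. \]

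For the product side $\E\E'\|\sum_{n,k} \epsi_n \epsi'_k x_{n,k}\|_{L^p(Y)}$, I would freeze $\epsi'$, write the sum as $\sum_n \epsi_n y_n(\epsi')$ with $y_n(\epsi') = \sum_k \epsi'_k x_{n,k} \in L^p(\Omega,Y)$, and apply Lemma \ref{lem-UMD-lattice-Rademacher} in $L^p(\Omega,Y)$ to obtain
\[ \E \Bigl\| \sum_n \epsi_n y_n(\epsi') \Bigr\|_{L^p(Y)} \cong \Bigl\| \Bigl( \sum_n \bigl|\sum_k \epsi'_k x_{n,k}\bigr|^2 \Bigr)^{1/2} \Bigr\|_{L^p(Y)} . \]
The right hand side is precisely $\|\sum_k \epsi'_k z_k \|_{L^p(\Omega,Y(\ell^2))}$, where $z_k := (x_{n,k})_n \in L^p(\Omega,Y(\ell^2))$. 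By Lemma \ref{lem-UMD-lattice-Hilbert-extension}, $L^p(\Omega,Y(\ell^2))$ is again a UMD lattice, so a second application of Lemma \ref{lem-UMD-lattice-Rademacher} now in $L^p(\Omega,Y(\ell^2))$ gives
\[ \E' \Bigl\| \sum_k \epsi'_k z_k \Bigr\|_{L^p(\Omega,Y(\ell^2))} \cong \Bigl\| \Bigl( \sum_k |z_k|^2 \Bigr)^{1/2} \Bigr\|_{L^p(\Omega,Y(\ell^2))} = \Bigl\| \Bigl( \sum_{n,k} |x_{n,k}|^2 \Bigr)^{1/2} \Bigr\|_{L^p(Y)}, \]
where the last equality uses the lattice identity $\|(\sum_n |u_n|^2)^{1/2}\|_Y$ for the norm on $Y(\ell^2)$. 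Combining with Fubini to swap $\E$ and $\E'$ yields the desired two-sided estimate.

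The only mild obstacle is bookkeeping the identification between $L^p(\Omega,Y)(\ell^2)$ and $L^p(\Omega,Y(\ell^2))$, and checking that $\|(\sum_n |\sum_k \epsi'_k x_{n,k}|^2)^{1/2}\|_{L^p(Y)}$ coincides with the $L^p(\Omega,Y(\ell^2))$-norm of $\sum_k \epsi'_k z_k$; this is routine from the definition of the lattice $Y(\ell^2)$ and is covered by \cite[Sections B.2.1, B.2.2, Theorem B.2.7]{Lin}. Once this is in place, the two chains of equivalences reach the same square function, which establishes property $(\alpha)$ for $L^p(\Omega,Y)$.
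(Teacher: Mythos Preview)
Your argument is correct. The paper's own proof, however, is a one-line citation: it observes that $L^p(\Omega,Y)$ is again a UMD lattice and then invokes \cite[Proposition 7.5.4 and Theorem 7.5.20]{HvNVW2} to conclude property $(\alpha)$ directly. Your route is different in that you unpack the underlying mechanism: you use the Khintchine--Maurey equivalence of Lemma \ref{lem-UMD-lattice-Rademacher} twice (once on $L^p(\Omega,Y)$, once on $L^p(\Omega,Y(\ell^2))$) to reduce both the doubly indexed Rademacher sum and the product Rademacher sum to the common square function $\bigl\|(\sum_{n,k}|x_{n,k}|^2)^{1/2}\bigr\|_{L^p(Y)}$. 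This is exactly the standard way one proves that Banach lattices with finite cotype have property $(\alpha)$, so you are essentially reproving the cited black box within the paper's framework. The advantage of your approach is that it is self-contained given the lemmas already stated in the paper; the advantage of the paper's approach is brevity and a pointer to the general fact.
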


\begin{proof}
Note that $L^p(\Omega,Y)$ is a UMD lattice, so it has Pisier's property $(\alpha)$, see \cite[Proposition 7.5.4 and Theorem 7.5.20]{HvNVW2}.
\end{proof}

\subsection{Abstract H\"ormander functional calculus}
\label{subsec-abstract-Hormander}

We recall the necessary background on functional calculus that we will treat in this article.
Let $-A$ be a generator of an analytic semigroup $(T_z)_{z \in \Sigma_\delta}$ on some Banach space $X,$ that is, $\delta \in (0,\frac{\pi}{2}],$ $\Sigma_\delta = \{ z \in \C \backslash \{ 0 \} :\: | \arg z | < \delta \},$ the mapping $z \mapsto T_z$ from $\Sigma_\delta$ to $B(X)$ is analytic, $T_{z+w} = T_z T_w$ for any $z,w \in \Sigma_\delta,$ and $\lim_{z \in \Sigma_{\delta'},\:|z| \to 0 } T_zx = x$ for any $x \in X$ and any strict subsector $\Sigma_{\delta'}$ of $\Sigma_\delta$.
We assume that $(T_z)_{z \in \Sigma_\delta}$ is a bounded analytic semigroup, which means $\sup_{z \in \Sigma_{\delta'}} \|T_z\| < \infty$ for any $\delta' < \delta.$

It is well-known \cite[Theorem 4.6, p. 101]{EN} that this is equivalent to $A$ being $\omega$-sectorial for $\omega = \frac{\pi}{2} - \delta,$ that is,
\begin{enumerate}
\item $A$ is closed and densely defined on $X;$
\item The spectrum $\sigma(A)$ is contained in $\overline{\Sigma_\omega}$ (in $[0,\infty)$ if $\omega = 0$);
\item For any $\omega' > \omega,$ we have $\sup_{\lambda \in \C \backslash \overline{\Sigma_{\omega'}}} \| \lambda (\lambda - A)^{-1} \| < \infty.$
\end{enumerate}
We say that $A$ is strongly $\omega$-sectorial if it is $\omega$-sectorial and has moreover dense range.
If $A$ is $\omega$-sectorial and does not have dense range, but $X$ is reflexive, which will always be the case in this article, then we may take the injective part $A_0$ of $A$ on $\overline{R(A)} \subseteq X$ \cite[Proposition 15.2]{KW04}, which then does have dense range and is strongly $\omega$-sectorial.
Here, $R(A)$ stands for the range of $A.$
Then $-A$ generates an analytic semigroup on $X$ if and only if so does $-A_0$ on $\overline{R(A)}.$
For $\theta \in (0,\pi),$ let 
\[ \HI(\Sigma_\theta) = \{ f : \Sigma_\theta \to \C :\: f \text{ analytic and bounded} \} \] equipped with the uniform norm $\|f\|_{\infty,\theta} = \sup_{z \in \Sigma_\theta} |f(z)|.$
Let further 
\[ \HI_0(\Sigma_\theta) = \bigl\{ f \in \HI(\Sigma_\theta):\: \exists \: C ,\epsilon > 0 \text{ such that } |f(z)| \leq C \min(|z|^\epsilon,|z|^{-\epsilon}) \bigr\}.\]
For an $\omega$-sectorial operator $A$ and $\theta \in (\omega,\pi),$ one can define a functional calculus $\HI_0(\Sigma_\theta) \to B(X),\: f \mapsto f(A)$ extending the ad hoc rational calculus, by using a Cauchy integral formula.
Moreover, if there exists a constant $C < \infty$ such that $\|f(A)\| \leq C \| f \|_{\infty,\theta},$ then $A$ is said to have a bounded $\HI(\Sigma_\theta)$ calculus and if $A$ has a dense range, the above functional calculus can be extended to a bounded Banach algebra homomorphism $\HI(\Sigma_\theta) \to B(X).$
If $A$ has a bounded $\HI(\Sigma_\theta)$ calculus, and does not have dense range, but $X$ is reflexive, then for $f \in \HI(\Sigma_\theta)$ such that $f(0)$ is well-defined, we can define
\[ f(A) = \begin{bmatrix} f(A_0) & 0 \\ 0 & f(0) P_{N(A)} \end{bmatrix} : \: \overline{R(A)} \oplus N(A) \to \overline{R(A)} \oplus N(A) ,\]
where $P_{N(A)}$ denotes the projection onto the null-space of $A$ along the decomposition $X = \overline{R(A)} \oplus N(A)$.
This calculus also has the property $f_z(A) = T_z$ for $f_z(\lambda) = \exp(-z \lambda),\: z \in \Sigma_{\frac{\pi}{2} - \theta}.$
For further information on the $\HI$ calculus, we refer e.g. to \cite{KW04}. We now turn to H\"ormander function classes and their calculi.
\begin{defi}
\label{defi-Hoermander-class}
Let $\alpha > \frac12.$
We define the H\"ormander class by 
\[\Hor^\alpha_2 = \bigl\{ f : [0,\infty) \to \C \text{ is bounded and continuous on }(0,\infty), \:  \underbrace{|f(0)| + \sup_{R > 0} \| \phi f(R \,\cdot) \|_{W^\alpha_2(\R)}}_{=:\|f\|_{\Hor^\alpha_2}}< \infty \bigr\}.\]
Here $\phi$ is any $C^\infty_c(0,\infty)$ function different from the constant 0 function (different choices of functions $\phi$ resulting in equivalent norms) and $W^\alpha_2(\R)$ is the classical Sobolev space.
\end{defi}

The term $|f(0)|$ is not needed in the functional calculus applications of $\Hor^\alpha_2$ if $A$ is in addition injective.
We can base a H\"ormander functional calculus on the $\HI$ calculus by the following procedure.
Note that for any $\alpha > \frac12$ and any $\theta \in (0,\pi)$, $\HI(\Sigma_\theta)$ injects continuously into $\Hor^\alpha_2$.

\begin{defi}
\label{def-Hormander-calculus}
We say that a $0$-sectorial operator $A$ has a bounded $\Hor^\alpha_2$ calculus if for some $\theta \in (0,\pi)$ and any $f \in \HI(\Sigma_\theta),$
$\|f(A)\| \leq C \|f\|_{\Hor^\alpha_2} \left( \leq C' \left(\|f\|_{\infty,\theta} + |f(0)|\right) \right).$
In this case, the $\HI(\Sigma_\theta)$ calculus can be extended to a bounded Banach algebra homomorphism $\Hor^\alpha_2 \to B(X)$ \cite{KrW3}.
We say that $A$ has an $R$-bounded $\Hor^\alpha_2$ calculus, if it has a bounded $\Hor^\alpha_2$ calculus and
$\left\{ m(A) : \: \|m\|_{\Hor^\alpha_2} \leq 1 \right\}$ is $R$-bounded.
\end{defi}

The H\"ormander norm is dilation invariant, i.e. $\|f(t \cdot)\|_{\Hor^\alpha_2} = \|f\|_{\Hor^\alpha_2}$ for any $t > 0$.
Therefore, the following family of (discrete) dilates of a $C^\infty_c(\R_+)$ function will play an important role.

\begin{defi}
\label{def-dyad}
Let $\dyad_0 \in C^\infty_c(\R_+)$ such that $\supp (\dyad_0) \subseteq [\frac12,2]$.
We define for $n \in \Z$ the dilates $\dyad_n(t) = \dyad_0(2^{-n}t)$ so that $\supp (\dyad_n) \subseteq [\frac12 \cdot 2^n , 2 \cdot 2^n]$.
Assume that $\sum_{n \in \Z} \dyad_n(t) = 1$ for any $t > 0$.
Then we call $(\dyad_n)_{n \in \Z}$ a dyadic partition of $\R_+ = (0,\infty)$.
For the existence of such a dyadic partition, we refer to \cite[6.1.7 Lemma]{BeL}.
\end{defi}

In the course of the main theorems \ref{thm-q-variation} and \ref{thm-main-ft}, we need to decompose general spectral multipliers by means of special spectral multiplier pieces involving the above dyadic partition.
To reassemble the pieces together, mere boundedness of the pieces is not sufficient, and we will need the following self-improvement of a H\"ormander functional calculus.

\begin{prop}
\label{prop-Hormander-calculus-to-R-Hormander-calculus}
Let $A$ be a $0$-sectorial operator on a Banach space $X$ with property $(\alpha)$.
If $A$ has a bounded $\Hor^\alpha_2$ calculus, then it has an $R$-bounded $\Hor^\gamma_2$ calculus
for any parameter $\gamma > \alpha + \frac{1}{\type X} - \frac{1}{\cotype X}$ such that $\gamma \geq \alpha + \frac12$.
If $X$ is a Hilbert space and $A$ has a bounded $\Hor^\alpha_2$ calclus, then $A$ has an $R$-bounded $\Hor^\alpha_2$ calculus.
\end{prop}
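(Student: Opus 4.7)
The plan is to represent any $m \in \Hor^\gamma_2$ as a double series of ``building blocks'' drawn from the $\Hor^\alpha_2$ calculus---via a dyadic decomposition plus a Fourier expansion on each dyadic scale---and then to use property $(\alpha)$ together with a type/cotype bookkeeping to convert the norm bound coming from the $\Hor^\alpha_2$ calculus into an $R$-bound.

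First I fix a dyadic partition $(\dyad_n)_{n \in \Z}$ as in Definition \ref{def-dyad} and write $m = \sum_{n} m \dyad_n$. By dilation invariance of the H\"ormander norm, the rescaled pieces $m_n(\mu) := m(2^n \mu) \dyad_0(\mu)$ sit in $W^\gamma_2(\R)$ with $\|m_n\|_{W^\gamma_2(\R)} \lesssim \|m\|_{\Hor^\gamma_2}$ uniformly in $n$. Expanding $m_n$ in a Fourier series on a fixed compact interval containing $\supp \dyad_0$ yields $m_n(\mu) = \chi(\mu) \sum_{j \in \Z} c^{(n)}_j e^{ij\mu/L}$ for some cutoff $\chi$, with $\sum_j (1+|j|)^{2\gamma}|c^{(n)}_j|^2 \lesssim \|m_n\|_{W^\gamma_2(\R)}^2$. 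Setting $\Psi_j(\lambda) := \chi(\lambda) e^{ij\lambda/L}$, which satisfies $\|\Psi_j\|_{\Hor^\alpha_2} \lesssim (1+|j|)^\alpha$ by Sobolev embedding, and using $(m \dyad_n)(A) = m_n(2^{-n}A)$, one obtains
\[ m(A) = \sum_{n \in \Z} \sum_{j \in \Z} c^{(n)}_j \, \Psi_j(2^{-n}A), \]
with each $\Psi_j(2^{-n}A)$ bounded by $(1+|j|)^\alpha$ uniformly in $n$ thanks to the hypothesis on $A$.

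For the $R$-bound, take finitely many unit-norm $m_1,\ldots,m_K \in \Hor^\gamma_2$ and vectors $x_1,\ldots,x_K \in X$ and substitute the above decomposition into $\E_\epsilon \|\sum_k \epsilon_k m_k(A) x_k\|$. Property $(\alpha)$ (available by Lemma \ref{lem-property-alpha}) lets me insert two further independent Rademacher sequences $(\epsilon'_n)$ and $(\epsilon''_j)$, reducing matters to bounding a triple average in which $(k,n,j)$ are decoupled. For each fixed $j$, the family $\{\Psi_j(2^{-n}A)\}_n$ is $R$-bounded at the cost $(1+|j|)^\alpha$ via a Littlewood--Paley estimate available from the bounded $\HI$ calculus on the B-convex space $X$, after which a Kahane contraction peels these operators off. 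A type/cotype conversion for the $j$-average then costs a factor $(1+|j|)^{\frac{1}{\type X} - \frac{1}{\cotype X}}$, and Cauchy--Schwarz against the decay $(1+|j|)^{-\gamma}$ of the coefficients yields summability exactly when $\gamma > \alpha + \frac{1}{\type X} - \frac{1}{\cotype X}$; the auxiliary assumption $\gamma \geq \alpha + \frac12$ covers the absolute convergence of the Fourier representation in an elementary range.

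The main obstacle is executing the triple Rademacher swap cleanly while keeping quantitative control on the building blocks: this requires both the Littlewood--Paley estimate for $(\dyad_n(A))_n$ (a consequence of the bounded $\HI$ calculus on a B-convex space) and the $\HI$-absorption of the unimodular groups $e^{ij 2^{-n}A/L}$ inside the $\Hor^\alpha_2$ norm of $\Psi_j$---the $(1+|j|)^\alpha$ loss there is exactly balanced by the type/cotype trade and the Fourier decay. The Hilbert case is immediate from the first step: when $X$ is Hilbert one has $\type X = \cotype X = 2$, so no regularity gap is needed, and by Remark \ref{rem-R-bdd-Hilbert} $R$-boundedness coincides with boundedness.
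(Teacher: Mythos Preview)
The paper's own proof is simply a citation to \cite[Lemma 3.9(3), Theorem 6.1(2)]{KrW3} for the first assertion and to Remark \ref{rem-R-bdd-Hilbert} for the Hilbert case; it gives no self-contained argument. Your sketch is in the spirit of the strategy carried out in that reference---dyadic decomposition, Fourier expansion of each dyadic piece, property $(\alpha)$ to decouple indices, and a type/cotype count---so you are not proposing a genuinely different route but rather unpacking the cited black box.

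Two points deserve attention. First, a minor one: property $(\alpha)$ is a \emph{hypothesis} of the proposition, not something you extract from Lemma \ref{lem-property-alpha}; that lemma only covers the concrete case $X = L^p(\Omega,Y)$, whereas here $X$ is an abstract Banach space assumed to have property $(\alpha)$.

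Second, and more substantively: the sentence ``For each fixed $j$, the family $\{\Psi_j(2^{-n}A)\}_n$ is $R$-bounded at the cost $(1+|j|)^\alpha$ via a Littlewood--Paley estimate available from the bounded $\HI$ calculus'' hides the real work. A Paley--Littlewood equivalence gives $R$-boundedness of $\{\dyad_n(A)\}_n$, but that alone does not yield an $R$-bound for $\{\Psi_j(2^{-n}A)\}_n$ with the claimed constant, because the operators $\Psi_j(2^{-n}A)$ are applied to \emph{different} vectors $x_n$ and cannot be collapsed into a single spectral multiplier. In \cite{KrW3} this step goes through a characterisation of the $\Hor^\alpha_2$ calculus in terms of $R$-bounds (in fact square-function or $\gamma$-bounds) for the wave-type family $\{(1+|s|)^{-\alpha}\dyad_0(2^{-n}A)e^{is2^{-n}A}:n\in\Z,\,s\in\R\}$, combined with property $(\alpha)$ to pass from a norm bound to an $R$-bound on this two-parameter family. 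Your sketch has the right architecture, but you should flag that this $R$-boundedness is precisely the content of the cited theorem and not a routine consequence of Littlewood--Paley theory.
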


\begin{proof}
This follows from \cite[Lemma 3.9 (3), Theorem 6.1 (2)]{KrW3}, noting that the $\Hor^\beta_r$ class there is larger than our $\Hor^\gamma_2$ class for $\gamma = \beta$.
The last sentence follows from Remark \ref{rem-R-bdd-Hilbert}.
\end{proof}

The following lemmata concerning decomposition/expansion of spectral multipliers will be used in the proof of Theorem \ref{thm-q-variation}.
Here, Lemma \ref{lem-Hormander-calculus-Paley-Littlewood} is sometimes called Paley-Littlewood equivalence.

\begin{lemma}
\label{lem-Hormander-convergence-lemma}
Let $A$ be a $0$-sectorial operator with $\Hor^\alpha_2$ calculus.
Let $(\dyad_n)_{n \in \Z}$ be a dyadic partition of $\R_+$.
Then for any $x \in \overline{R(A)}$ (e.g. $x = m(A)y$ for some $y \in X$ and $m \in \Hor^\alpha_2$ with $m(0) = 0$), we have $x = \sum_{n \in \Z} \dyad_n(A) x$ (convergence in $X$).
\end{lemma}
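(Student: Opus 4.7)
The plan is a standard three-step density argument that reduces the two-sided series to the strong convergence of the symmetric partial sums $\Psi_N := \sum_{n=-N}^{N}\dyad_n$ on $\overline{R(A)}$. First, I would prove $\sup_N \|\Psi_N\|_{\Hor^\alpha_2} < \infty$, which by the assumed $\Hor^\alpha_2$ calculus of $A$ yields $\sup_N \|\Psi_N(A)\|_{B(X)} < \infty$. The key observation is that in the seminorm of Definition~\ref{defi-Hoermander-class}, for every dilation scale $R > 0$ only a uniformly bounded number of the terms in $\phi \cdot \Psi_N(R\,\cdot) = \sum_{n=-N}^{N}\phi(\cdot)\dyad_0(2^{-n}R\,\cdot)$ are nonzero (depending only on $\supp(\phi)$ and $\supp(\dyad_0)$, not on $R$ or $N$), and each such smooth compactly supported piece has $W^\alpha_2(\R)$-norm controlled uniformly in the remaining parameters.

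Next, I would exhibit a dense subset of $\overline{R(A)}$ on which the convergence $\Psi_N(A)y \to y$ is direct. Because the $\Hor^\alpha_2$ calculus supplies in particular an $\HI$ calculus, $-A$ generates a bounded analytic semigroup; the standard analyticity estimate $\|A e^{-tA}\| \lesssim 1/t$ gives $e^{-tA}x \to 0$ as $t \to \infty$ for $x \in R(A)$, hence by density and uniform boundedness for every $x \in \overline{R(A)}$, while $e^{-sA}x \to x$ as $s \to 0^+$ by $c_0$-continuity. Setting $\xi_{s,t}(\lambda) := e^{-s\lambda} - e^{-t\lambda}$ and $y := \xi_{s,t}(A)x$, we therefore have $y \to x$ in $X$ as $s \to 0^+$, $t \to \infty$, and by the calculus homomorphism
\[ \Psi_N(A) y - y = \bigl((\Psi_N - 1)\xi_{s,t}\bigr)(A)\, x, \]
so it is enough to show $\|(1-\Psi_N)\xi_{s,t}\|_{\Hor^\alpha_2} \to 0$ as $N \to \infty$.

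For this final ingredient, decompose $(1-\Psi_N)\xi_{s,t} = \sum_{|n|>N}\dyad_n \xi_{s,t}$. Each summand is supported in $[2^{n-1},2^{n+1}]$, and by the dilation invariance of the Hormander norm,
\[ \|\dyad_n \xi_{s,t}\|_{\Hor^\alpha_2} = \|\dyad_0(\cdot)\,\xi_{s,t}(2^n\,\cdot)\|_{\Hor^\alpha_2}, \]
which is controlled by a fixed-scale $W^\alpha_2$-norm on the compact support of $\dyad_0$. The simple zero of $\xi_{s,t}$ at the origin yields an $O(2^n)$ bound for $n \to -\infty$, while the exponential decay at infinity yields a (super-polynomial) bound for $n \to +\infty$; altogether $\|\dyad_n \xi_{s,t}\|_{\Hor^\alpha_2} \lesssim 2^{-|n|}$. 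Summing the geometric tail gives $\|(1-\Psi_N)\xi_{s,t}\|_{\Hor^\alpha_2} \lesssim 2^{-N} \to 0$, and combining this with the uniform bound of Step~1 via a standard $3\epsilon$-argument finishes the proof.

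The main technical hurdle is this last quantitative estimate: once dilation invariance reduces matters to a fixed scale, the Leibniz rule must be tracked carefully to confirm that the zero of $\xi_{s,t}$ at $0$ and the decay at $\infty$ survive differentiation up to order $\alpha$. Everything else is uniform boundedness of the truncations and routine density machinery.
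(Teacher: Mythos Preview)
Your proposal is correct. The paper itself gives no argument at all here --- it simply cites \cite[Corollary 4.20]{KrPhD} --- so there is nothing to compare against beyond noting that your uniform-boundedness-plus-density approach via the auxiliary functions $\xi_{s,t}(\lambda)=e^{-s\lambda}-e^{-t\lambda}$ is exactly the standard proof of such convergence lemmas and is what one finds in the cited reference. One small remark: when you write ``summing the geometric tail'' for $\|(1-\Psi_N)\xi_{s,t}\|_{\Hor^\alpha_2}$, recall that the H\"ormander norm is a supremum over scales rather than a sum, so the relevant bound is really $\sup_{|n|>N}\|\dyad_n\xi_{s,t}\|_{\Hor^\alpha_2}\lesssim 2^{-N}$ (only finitely many dyadic pieces are seen at each scale), which is what your decay estimates actually yield; the conclusion is unaffected.
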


\begin{proof}
See \cite[Corollary 4.20]{KrPhD}.
\end{proof}

In the setting of the above Lemma \ref{lem-Hormander-convergence-lemma}, we obtain that $D_A := \{ \phi(A)x :\: x \in X,\: \phi \in C^\infty_c(\R_+) \}$ is a dense subspace of $\overline{R(A)}$.
In \cite{KrW3}, $D_A$ is called the calculus core of $A$.

\begin{lemma}
\label{lem-representation-formula-wave-operators}
Let $A$ be a $0$-sectorial operator having a $\Hor^\alpha_2$ calculus.
Let $m \in W^\alpha_2(\R)$ with compact support in $\R_+$.
Then for any $x$ belonging to the calculus core $D_A$, we have
\[ m(A)x = \frac{1}{2\pi} \int_\R \hat{m}(s) \exp(isA) x ds ,\]
where the integral is a Bochner integral in $X$.
\end{lemma}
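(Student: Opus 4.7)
The plan is to reduce to the calculus core and then combine Fourier inversion on $m$ with the multiplicativity of the $\Hor^\alpha_2$ functional calculus. Since $x \in D_A$, write $x = \phi(A)y$ with $\phi \in C_c^\infty(\R_+)$ and $y \in X$. Both $m$ and, for each $s \in \R$, the function $e^{is\cdot}\phi$ are compactly supported in $\R_+$ and lie in $W^\alpha_2(\R)$, hence in $\Hor^\alpha_2$ (the two norms are equivalent for functions supported in a fixed compact subset of $\R_+$). Multiplicativity of the calculus then yields
\[ m(A)x = (m\phi)(A)y, \qquad \exp(isA)x = (e^{is\cdot}\phi)(A) y. \]

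Next I would apply the $L^2$ Fourier inversion formula to $m$, multiply by $\phi(\lambda)$, and rewrite the pointwise identity as a Bochner integral in $W^\alpha_2$ of functions supported in a fixed compact subset of $\R_+$:
\[ m\phi = \frac{1}{2\pi} \int_\R \hat{m}(s)\, e^{is\cdot}\phi \, ds. \]
Pushing through the continuous algebra homomorphism $\Hor^\alpha_2 \to B(X)$ and evaluating at $y$ then produces the desired identity
\[ m(A)x = \frac{1}{2\pi} \int_\R \hat m(s) \exp(isA) x \, ds. \]

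The main obstacle is verifying that the right-hand side is a bona fide Bochner integral in $X$. The $\Hor^\alpha_2$ calculus gives
\[ \|\exp(isA)x\|_X \le C\|e^{is\cdot}\phi\|_{\Hor^\alpha_2}\|y\| \lesssim (1+|s|)^\alpha\|\phi\|_{W^\alpha_2}\|y\|, \]
the polynomial bound being a Leibniz estimate which exploits the compact support of $\phi$ in $\R_+$. One must then show $\int_\R |\hat m(s)|(1+|s|)^\alpha\, ds < \infty$ for $m \in W^\alpha_2(\R)$ compactly supported; this is the delicate point. The cleanest way is to verify the formula first for $m \in C_c^\infty(\R_+)$, where $\hat m$ is Schwartz and all integrals converge absolutely, and then extend to the general case by density: approximate $m$ in $W^\alpha_2$ by mollifications $m_\varepsilon = m * \rho_\varepsilon \in C_c^\infty(\R_+)$ with uniformly compact support, apply the formula to $m_\varepsilon$, and pass to the limit, using continuity of $m \mapsto m(A)x$ from $\Hor^\alpha_2$ to $X$ together with a standard Bochner limit argument on the right-hand side.
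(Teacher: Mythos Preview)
Your approach follows the natural line and is essentially what the cited argument in \cite[Proof of Lemma 4.6 (3)]{KrW3} does: reduce to $x = \phi(A)y$, use Fourier inversion on $m$, and push the resulting identity through the bounded homomorphism $\Hor^\alpha_2 \to B(X)$. Since the paper itself only cites that reference and gives no details, I assess your argument on its own terms.

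The gap is in your final density step. You correctly derive $\|\exp(isA)x\| \lesssim (1+|s|)^\alpha$ and correctly flag that one needs $\int_\R |\hat m(s)|(1+|s|)^\alpha\,ds < \infty$. For $m \in W^\alpha_2(\R)$ this is \emph{false} in general: one only has $\hat m(s)(1+|s|)^\alpha \in L^2$, and Cauchy--Schwarz against the constant $1$ diverges. Your proposed fix --- prove the identity for $m_\varepsilon = m * \rho_\varepsilon \in C_c^\infty(\R_+)$ and pass to the limit --- does show that $\int_\R \hat m_\varepsilon(s)\exp(isA)x\,ds \to m(A)x$, because the left side equals $m_\varepsilon(A)x$ and $m_\varepsilon \to m$ in $\Hor^\alpha_2$. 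But it does \emph{not} show that this limit equals the Bochner integral $\int_\R \hat m(s)\exp(isA)x\,ds$, nor even that the latter exists: dominated convergence for $\hat m_\varepsilon(s) = \hat m(s)\hat\rho(\varepsilon s) \to \hat m(s)$ would require exactly the $L^1$ bound $|\hat m(s)|(1+|s|)^\alpha \in L^1$ that you are trying to avoid. So the ``standard Bochner limit argument'' you invoke is in fact the entire difficulty, not a routine step.

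One clean repair is to truncate on the Fourier side rather than mollify. Set $m_R = \mathcal F^{-1}(\hat m \cdot 1_{[-R,R]})$; then $m_R \to m$ in $W^\alpha_2$, hence $(m_R\phi)(A)y \to (m\phi)(A)y = m(A)x$ since multiplication by $\phi \in C_c^\infty(\R_+)$ is bounded on $W^\alpha_2$, while
\[
(m_R\phi)(A)y \;=\; \frac{1}{2\pi}\int_{-R}^R \hat m(s)\,(e^{is\cdot}\phi)(A)y\,ds
\]
is a genuine Bochner integral on each compact interval (the integrand is continuous and bounded there). This already gives the representation as an improper integral $\lim_{R\to\infty}\int_{-R}^R$, which is all that is actually used later in the paper. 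To upgrade to an absolutely convergent Bochner integral one needs either a little extra regularity on $m$ (any $m \in W^{\alpha'}_2$ with $\alpha' > \alpha + \tfrac12$ suffices by Cauchy--Schwarz, and this slack is available in all the applications, cf.\ the proof of Theorem~\ref{thm-main-ft} where an extra resolvent power is inserted) or a sharper bound on $\|\exp(isA)x\|$ than the generic $(1+|s|)^\alpha$.
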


\begin{proof}
This follows from \cite[Proof of Lemma 4.6 (3)]{KrW3}.
\end{proof}

\begin{lemma}
\label{lem-Hormander-calculus-Paley-Littlewood}
Let $A$ be a $0$-sectorial operator with $\Hor^\alpha_2$ calculus for some $\alpha > \frac12$.
Let $(\dyad_n)_{n \in \Z}$ be a dyadic partition of $\R_+$.
Then we have the following so-called Paley-Littlewood decomposition for $x \in \overline{R(A)}$:
\[ \|x\|_X \cong \E \left\| \sum_{n \in \Z} \epsi_n \dyad_n(A) x \right\|_X ,\]
where the series $\sum_{n \in \Z} \dyad_n(A)x$ converges unconditionally in $X$.
\end{lemma}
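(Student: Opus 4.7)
The plan is to obtain the two sides of the equivalence separately, treating the upper bound by a direct calculus argument and the lower bound by duality. Throughout, I would exploit the fact that the dyadic partition has \emph{finite overlap}: at most two consecutive $\dyad_n$'s are simultaneously non-zero.

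\textbf{Step 1 (Upper estimate).} For any sign sequence $\epsi = (\epsi_n)_{n \in \Z}$, define $m_\epsi(\lambda) = \sum_{n \in \Z} \epsi_n \dyad_n(\lambda)$. I would first show that $\|m_\epsi\|_{\Hor^\alpha_2}$ is bounded independently of $\epsi$. Indeed, for any $R > 0$, the support of $\phi \cdot m_\epsi(R \cdot)$ meets at most a fixed number $N$ (independent of $R$) of the sets $\supp \dyad_n$, and each summand $\epsi_n \phi \cdot \dyad_n(R \cdot) = \pm \phi \cdot \dyad_0(2^{-n} R \cdot)$ is, up to dilation, a fixed $C^\infty_c$ function; by dilation invariance of the $W^\alpha_2$ norm modulo compactly supported cutoff, this gives a uniform $\Hor^\alpha_2$ bound. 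Invoking the hypothesis that $A$ has a bounded $\Hor^\alpha_2$ calculus then yields $\|m_\epsi(A)x\|_X \lesssim \|x\|_X$ uniformly in $\epsi$. Averaging, $\E \bigl\| \sum_n \epsi_n \dyad_n(A) x \bigr\|_X \lesssim \|x\|_X$.

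\textbf{Step 2 (Unconditional convergence).} For $x \in \overline{R(A)}$, Lemma \ref{lem-Hormander-convergence-lemma} gives $x = \sum_{n \in \Z} \dyad_n(A)x$ with convergence in $X$. Combining this with the uniform bound from Step 1 applied to tail sums $\sum_{|n| > N} \epsi_n \dyad_n(A)x$, one deduces that the series actually converges unconditionally, in the sense that sign rearrangements converge uniformly to the corresponding $m_\epsi(A)x$.

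\textbf{Step 3 (Lower estimate, via duality).} Here is the delicate part. I would fix a second partition $(\tilde{\dyad}_n)_{n \in \Z}$ of the same form with $\tilde{\dyad}_n \equiv 1$ on $\supp \dyad_n$, so that $\tilde{\dyad}_n \dyad_n = \dyad_n$, hence $\dyad_n(A) = \tilde{\dyad}_n(A) \dyad_n(A)$ by multiplicativity of the calculus on $D_A$ and extension by density. Since $X$ is reflexive (which may be assumed in the intended applications, or handled by passing to the pre-dual), $A^*$ has the same $\Hor^\alpha_2$ calculus on $X^*$, and Step 1 applies to $A^*$ as well. For $x^* \in X^*$, the key identity is the \emph{diagonal trick} for Rademachers,
\[ \langle x, x^* \rangle = \sum_{n \in \Z} \langle \dyad_n(A) x, \tilde{\dyad}_n(A)^* x^* \rangle = \E \Bigl\langle \sum_n \epsi_n \dyad_n(A) x, \sum_m \epsi_m \tilde{\dyad}_m(A)^* x^* \Bigr\rangle, \]
since only the diagonal terms $m = n$ survive the Rademacher expectation. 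By Cauchy--Schwarz applied to the expectation and Kahane's inequality, the right-hand side is controlled by
\[ C \, \E \Bigl\| \sum_n \epsi_n \dyad_n(A) x \Bigr\|_X \cdot \E \Bigl\| \sum_m \epsi_m \tilde{\dyad}_m(A)^* x^* \Bigr\|_{X^*}. \]
Applying Step 1 on the dual side bounds the second factor by $\|x^*\|_{X^*}$. Taking the supremum over $x^*$ in the unit ball of $X^*$ gives $\|x\|_X \lesssim \E \| \sum_n \epsi_n \dyad_n(A) x \|_X$.

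The main obstacle is the lower estimate of Step 3: one has to justify the $\Hor^\alpha_2$ calculus of $A^*$ on $X^*$ (routine under reflexivity, as it is inherited by taking adjoints of the operators $f(A)$ for $f \in \HI_0$) and to ensure that the Rademacher-diagonal pairing is legitimate, which in turn requires the unconditional convergence from Step 2 both for $x$ and $x^*$. No further regularity beyond $\alpha > \frac12$ is needed, because the proof does not go through $R$-boundedness of the family $\{\tilde{\dyad}_n(A)\}_n$ but only through the scalar calculus bound for the single multiplier $m_\epsi$.
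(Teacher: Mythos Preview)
Your proof is essentially correct and follows the standard route to Paley--Littlewood equivalences (uniform $\Hor^\alpha_2$ bound on the signed multiplier $m_\epsi$ for the upper estimate, duality plus the Rademacher diagonal trick for the lower estimate). The paper does not actually supply a proof: it merely cites \cite[Theorem 4.1]{KrW2}, after noting that a $\Hor^\alpha_2$ calculus implies the Mihlin-type $\mathcal{M}^\beta$ calculus required in that reference, via \cite[Proposition 4.9]{KrPhD}. So there is nothing to compare at the level of method; your argument is a self-contained substitute for the black-box citation.

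Two small points deserve tightening. First, your ``second partition'' $(\tilde\dyad_n)_n$ with $\tilde\dyad_n\equiv 1$ on $\supp\dyad_n$ cannot literally be a partition of unity (adjacent supports overlap, so you would get $\tilde\dyad_n+\tilde\dyad_{n+1}\geq 2$ somewhere); what you want is $\tilde\dyad_n=\dyad_{n-1}+\dyad_n+\dyad_{n+1}$, which satisfies $\tilde\dyad_n\dyad_n=\dyad_n$ and for which the Step~1 bound still holds (apply it three times with shifted signs). Second, in Step~3 you should restrict the duality to $x^*\in\overline{R(A^*)}$: this loses nothing since $x\in\overline{R(A)}$ annihilates $N(A^*)$, and on $\overline{R(A^*)}$ the unconditional convergence from Step~2 applies to $A^*$ (reflexivity of $X$, guaranteed in the paper's applications, transfers the $\Hor^\alpha_2$ calculus to the adjoint). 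With these fixes your argument is complete.
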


\begin{proof}
See \cite[Theorem 4.1]{KrW2}, together with the fact that the restriction of $A$ to $\overline{R(A)}$ is a strongly $0$-sectorial operator having a $\Hor^\alpha_2$ calculus, hence a $\mathcal{M}^\beta$ calculus \cite[Proposition 4.9]{KrPhD} needed in this reference.
\end{proof}

\section{$q$-variational inequalities}
\label{sec-q-variation}

In this section, we consider $q$-variational inequalities for H\"ormander spectral multipliers.
The proofs are based on \cite[Section 3]{DK}.
We refer to the recent papers \cite{LMX}, \cite{HoMa} for $q$-variational inequalities for the particular spectral multiplier $\exp(-tA)$ for contractive regular analytic semigroups.
\begin{defi}
\label{def-q-variation}
Let $q \in [2 ,\infty)$.
For a function $a : \R_+ \to \C$, we define the $q$-variation
\[ \|a\|_{V^q} = \sup \left\{ \left( |a_{t_0}|^q + \sum_{k = 1}^\infty |a_{t_k} - a_{t_{k-1}}|^q \right)^{\frac1q} \right\} , \]
where the supremum runs over all increasing sequences $(t_k)_{k \in \N}$ in $\R_+$.
Then the space $V^q$ consisting of all functions with finite $q$-variation is a Banach space \cite[Section 1]{LMX}.
\end{defi}

The following simple observation is at the heart of the proof of Theorem \ref{thm-q-variation} below.

\begin{lemma}
\label{lem-q-variation}
Let $q \in [2,\infty)$ and $\beta > \frac12$.
Then $\Lambda^\beta_{2,2}(\R_+) \hookrightarrow V^q$, where we take for the equivalence class of an element $a \in \Lambda^\beta_{2,2}(\R_+)$ the (unique) continuous representative.
\end{lemma}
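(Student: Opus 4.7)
The plan is to reduce the statement, via the exponential change of variable, to a Sobolev-to-variation embedding on $\R$, and then to combine a Littlewood--Paley decomposition with Bernstein and Plancherel--Polya inequalities.

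First I would exploit that $\exp : \R \to \R_+$ is an increasing homeomorphism, so that $f \mapsto f \circ \exp$ is by definition an isometry from $\Lambda^\beta_{2,2}(\R_+)$ onto $W^\beta_2(\R)$ which preserves the $q$-variation: setting $g = f \circ \exp$ and $s_k = \log t_k$,
\[ |g(s_0)|^q + \sum_{k\geq 1}|g(s_k) - g(s_{k-1})|^q = |f(t_0)|^q + \sum_{k\geq 1}|f(t_k) - f(t_{k-1})|^q, \]
so the two $V^q$-seminorms coincide after taking suprema. Since $\beta > \frac12$, the Sobolev embedding $W^\beta_2(\R) \hookrightarrow C_0(\R)$ provides the unique continuous representative alluded to in the statement. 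Hence it suffices to prove $\|g\|_{V^q(\R)} \lesssim \|g\|_{W^\beta_2(\R)}$ for $g \in W^\beta_2(\R)$, where $V^q(\R)$ is defined by the obvious analogue.

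Next I would take a Littlewood--Paley decomposition $g = \sum_{j \geq 0} g_j$ with $\widehat{g_j}$ supported in $\{|\xi| \approx 2^j\}$ for $j \geq 1$ and in $\{|\xi| \leq 2\}$ for $j = 0$, so that $\|g\|_{W^\beta_2}^2 \cong \sum_j 2^{2j\beta} \|g_j\|_2^2$. The central reduction is the single-frequency estimate $\|g_j\|_{V^q(\R)} \lesssim 2^{j/2} \|g_j\|_2$. Granting this, subadditivity of $\|\cdot\|_{V^q}$ together with Cauchy--Schwarz in $j$ give
\[ \|g\|_{V^q} \leq \sum_j \|g_j\|_{V^q} \lesssim \sum_j 2^{j(\frac12 - \beta)} \cdot 2^{j\beta}\|g_j\|_2 \leq \Bigl(\sum_j 2^{j(1-2\beta)}\Bigr)^{1/2} \|g\|_{W^\beta_2}, \]
the geometric series converging precisely because $\beta > \frac12$. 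The initial term $|g(s_0)|^q$ appearing in the $V^q$-seminorm is absorbed by $\|g\|_\infty \lesssim \|g\|_{W^\beta_2}$ from Sobolev embedding (applied to $g$ and, termwise, to each $g_j$ via Bernstein).

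The step I expect to be the main technical hurdle is the single-frequency estimate. Given an increasing sequence $(t_k)$, I would split the indices into $K_{\text{small}} = \{k : t_k - t_{k-1} \leq 2^{-j}\}$ and its complement $K_{\text{large}}$. On $K_{\text{small}}$, Cauchy--Schwarz on each interval yields $|g_j(t_k) - g_j(t_{k-1})|^2 \leq (t_k - t_{k-1}) \int_{t_{k-1}}^{t_k} |g_j'|^2$, and summation combined with Bernstein's inequality $\|g_j'\|_2 \lesssim 2^j \|g_j\|_2$ produces $\sum_{K_{\text{small}}} |g_j(t_k) - g_j(t_{k-1})|^2 \lesssim 2^{-j} \cdot 2^{2j}\|g_j\|_2^2 = 2^j\|g_j\|_2^2$. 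On $K_{\text{large}}$, each of the two sequences $\{t_k\}_{k \in K_{\text{large}}}$ and $\{t_{k-1}\}_{k \in K_{\text{large}}}$ is $2^{-j}$-separated, so the Plancherel--Polya sampling inequality for functions with Fourier support in $\{|\xi| \lesssim 2^j\}$ gives $\sum |g_j(t_k)|^2 \lesssim 2^j \|g_j\|_2^2$ (and similarly for $\{t_{k-1}\}$); the triangle inequality then yields $\sum_{K_{\text{large}}} |g_j(t_k) - g_j(t_{k-1})|^2 \lesssim 2^j \|g_j\|_2^2$. Combining both halves gives $\|g_j\|_{V^2} \lesssim 2^{j/2}\|g_j\|_2$, and since $\ell^2 \hookrightarrow \ell^q$ contractively for $q \geq 2$, one concludes $\|g_j\|_{V^q} \leq \|g_j\|_{V^2} \lesssim 2^{j/2}\|g_j\|_2$, closing the argument.
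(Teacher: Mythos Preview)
Your argument is correct. The reduction via the exponential change of variable matches the paper's, and your single-frequency estimate $\|g_j\|_{V^2}\lesssim 2^{j/2}\|g_j\|_2$ together with the Cauchy--Schwarz summation is sound; the separation claim for the points $\{t_k\}_{k\in K_{\text{large}}}$ and $\{t_{k-1}\}_{k\in K_{\text{large}}}$ indeed follows from monotonicity and the gap condition, and Plancherel--Polya applies as stated.

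The paper takes a shorter but less self-contained route: it reduces to $q=2$ and then quotes the embedding $B^{1/2}_{2,1}(\R)\hookrightarrow BV_2(\R)$ from Bourdaud--Lanza de Cristoforis--Sickel (and Peetre), followed by the standard Besov fine-index embedding $W^\beta_2=B^\beta_{2,2}\hookrightarrow B^{1/2}_{2,1}$ for $\beta>\tfrac12$. Your argument is in effect a hands-on proof of exactly these two facts: the single-frequency bound summed in $\ell^1$ over $j$ \emph{is} the statement $B^{1/2}_{2,1}\hookrightarrow V^2$, and your Cauchy--Schwarz step is the embedding $B^\beta_{2,2}\hookrightarrow B^{1/2}_{2,1}$. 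So the two proofs are structurally the same, but yours is elementary and avoids external references, while the paper's is two lines once one is willing to cite the Besov--$BV_2$ literature.
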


\begin{proof}
It can be seen directly from Definition \ref{def-q-variation} that $V^q \hookrightarrow V^r$ if $q \leq r$ (same proof as the fact that $\ell^q \hookrightarrow \ell^r$ for $q \leq r$), so that it suffices to pick the smallest considered exponent $q = 2$.
According to \cite[Theorem 5]{BLCS} with $p = 2$ there (see also \cite{Pee}), we have $B^{\frac12}_{2,1}(\R) \hookrightarrow \dot{B}^{\frac12}_{2,1}(\R) \hookrightarrow BV_2(\R)$.
The latter space $BV_2(\R)$ is defined in \cite[p.~461]{BLCS} and it can be seen from that source that $a \in V^2$ iff $a_e = a \circ \exp \in BV_2(\R)$.
So we have for $a \in \Lambda^\beta_{2,2}(\R_+)$, using the above injection and the well-known Besov space fine index injection $B^\beta_{2,2}(\R) \hookrightarrow B^{\frac12}_{2,1}(\R)$ \cite[2.3.2 Proposition 2]{Tri} with $\beta > \frac12$,
\[ \|a\|_{V^2} \cong \|a_e\|_{BV_2(\R)} \lesssim \|a_e\|_{B^{\frac12}_{2,1}(\R)} \lesssim \|a_e\|_{B^{\beta}_{2,2}(\R)} \cong \|a_e\|_{W^\beta_{2}(\R)} = \|a\|_{\Lambda^\beta_{2,2}(\R_+)} . \]
\end{proof}

We have the following variant of \cite[Proposition 3.9]{DK}, which yields finite $q$-variation spectral multipliers, thus extending \cite[(1.3)]{LMX} resp. \cite[(1.3) arXiv version]{HoMa} from the semigroup on scalar resp. UMD lattice valued $L^p$ spaces to more general spectral multipliers in case that $A$ has a H\"ormander calculus.

\begin{thm}
\label{thm-q-variation}
Let $Y = Y(\Omega')$ be a UMD lattice, $1 < p < \infty$ and $(\Omega,\mu)$ a $\sigma$-finite measure space.
Let $A$ be a $0$-sectorial operator on $L^p(\Omega,Y)$ having a $\Hor^\alpha_2$ calculus.
Let
\[ c > \alpha + \max\left(\frac12,\frac{1}{\type L^p(Y)} - \frac{1}{\cotype L^p(Y)} \right) + 1. \]
Let $m$ be a spectral multiplier such that
\begin{enumerate}
\item $m \in W^c_2(\R)$ and $\supp (m) \in [\frac12,2]$ or, more generally,
\item $m(0) = 0$, $m(2^n \cdot)\dyad_0 \in W^c_2(\R)$ for all $n \in \Z$ and $\sum_{n \in \Z} \|m(2^n \cdot)\dyad_0\|_{W^c_2(\R)} < \infty$, where $(\dyad_n)_{n \in \Z}$ is a partition of unity of $\R_+$ or,
\item assume that $A$ is of the form $A = A_0 \otimes \Id_Y$, where $A_0$ is $0$-sectorial on $L^p(\Omega)$ and that the semigroup $\exp(-tA_0)$ generated by $-A_0$ is lattice positive and contractive on $L^p(\Omega)$ (more generally regular contractive on $L^p(\Omega)$).
Assume $m|_{[0,1]} \in C^1[0,1] , \: \|m' \cutoff_0 \|_{\Hor^{c-1}_2} < \infty , \: \sum_{n \geq 0} \|m(2^n \cdot) \dyad_0\|_{W^c_2(\R)} < \infty$,
for some $\cutoff_0 \in C^\infty(\R_+)$ with support in $(0,4]$ and equal to $1$ on $(0,2]$.
\end{enumerate}
Then for $2 < q < \infty$, any $f \in L^p(\Omega,Y)$ and a.e. $(x,\omega) \in \Omega \times \Omega'$, $t \mapsto m(tA)f(x,\omega)$ has finite $q$-variation, and
\[ \left\| \, \|t \mapsto m(tA)f\|_{V^q} \right\|_{L^p(Y)} \leq C_m \|f\|_{L^p(Y)} \]
with
\begin{enumerate}
\item $C_m \leq C \norm{m}_{W^c_2(\R)}$,
\item $C_m \leq C \sum_{n \in \Z} \norm{m(2^n \cdot) \dyad_0}_{W^c_2(\R)}$,
\item $C_m \leq C \left( |m(0)| + \|m' \cutoff_0\|_{\Hor^{c-1}_2} + \sum_{n\geq 0} \|m(2^n \cdot) \dyad_0 \|_{W^c_2(\R)} \right)$.
\end{enumerate}
\end{thm}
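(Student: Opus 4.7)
The plan is to derive the $V^q$-variation estimate from the a priori stronger Hilbert-space-valued $\Lambda^\beta_{2,2}$-norm estimate
\[ \left\| \|t \mapsto m(tA)f\|_{\Lambda^\beta_{2,2}(\R_+)} \right\|_{L^p(Y)} \lesssim C_m \|f\|_{L^p(Y)}, \]
and then to invoke Lemma \ref{lem-q-variation} to pass from $\Lambda^\beta$ to $V^q$. Since the hypothesis on $c$ is strict, I will pick $\beta$ in the nonempty interval $\left(\tfrac12,\: c - \alpha - \max(\tfrac12, \tfrac{1}{\type L^p(Y)} - \tfrac{1}{\cotype L^p(Y)})\right)$. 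The point of replacing $V^q$ by $\Lambda^\beta$ at the outset is that $\Lambda^\beta$ is a Hilbert space and thus interacts cleanly with UMD lattice techniques, whereas $V^q$ does not.

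For cases 1 and 2, I would start with the Paley-Littlewood decomposition $f = \sum_{n \in \Z} \dyad_n(A)f$ of Lemma \ref{lem-Hormander-calculus-Paley-Littlewood} and transfer to a Rademacher sum in $L^p(\Omega, Y(\Lambda^\beta))$, which is itself a UMD lattice by Lemma \ref{lem-UMD-lattice-Hilbert-extension} and so obeys the square-function equivalence of Lemma \ref{lem-UMD-lattice-Rademacher}. For each $n \in \Z$, I would view $\mu \mapsto (t \mapsto m(t\mu)\dyad_n(\mu))$ as a $\Lambda^\beta$-valued spectral multiplier. Exploiting the dilation invariance of both the $\Lambda^\beta$-norm and the $\Hor^c_2$-norm (Remark \ref{rem-Lambda-dilation-invariant}), the $n$-th piece is a rescaling of the central one, with associated H\"ormander norm exactly $\|m(2^n \cdot)\dyad_0\|_{W^c_2(\R)}$. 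I would then invoke the $R$-bounded $\Hor^\gamma_2$ calculus for $\gamma = c - \beta$ from Proposition \ref{prop-Hormander-calculus-to-R-Hormander-calculus}, available because $L^p(Y)$ has property $(\alpha)$ by Lemma \ref{lem-property-alpha}, to reassemble the Rademacher sum and conclude with constant $C_m \lesssim \sum_n \|m(2^n \cdot)\dyad_0\|_{W^c_2(\R)}$. If finer control on individual scales is needed, the Fourier/wave-operator representation of Lemma \ref{lem-representation-formula-wave-operators} is the tool to use.

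Case 3, where $m(0) \neq 0$, I would dispose of by splitting $m(\lambda) = m(0) e^{-\lambda} + \tilde m(\lambda)$. The first summand $m(0) e^{-tA}f$ has its $V^q$-variation (and hence $\Lambda^\beta$-norm) controlled by the results of \cite{LMX, HoMa} for lattice-positive contractive (or regular contractive) semigroups on $L^p(\Omega)$, which transfer from $L^p(\Omega)$ to $L^p(Y)$ by the tensor-extension arguments subsumed in Lemma \ref{lem-UMD-lattice-Hilbert-extension}. The remainder $\tilde m$ vanishes at $0$; its scaled dyadic pieces are summable for $n \geq 0$ directly by hypothesis, and for $n < 0$ by combining $\|m' \cutoff_0\|_{\Hor^{c-1}_2} < \infty$ with the smoothness of $e^{-\lambda}$ at $0$. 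Case 2 then applies to $\tilde m$.

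The part I expect to be the main obstacle is the careful bookkeeping of the $\Lambda^\beta$-valued spectral multiplier at each dyadic scale, i.e. verifying that $\mu \mapsto (t \mapsto m(t\mu)\dyad_n(\mu))$ lies in the correct $\Hor^\gamma_2$ class with the expected norm, uniformly across the dilation and with the extra Hilbert direction. This rests on the dilation invariance of both norms, the Hilbert tensor-extension properties in Lemma \ref{lem-UMD-lattice-Hilbert-extension}, and property $(\alpha)$. By contrast, the final upgrade from $\Lambda^\beta$ to $V^q$ is soft once Lemma \ref{lem-q-variation} is at hand, the latter being essentially the Besov embedding $B^{1/2}_{2,1}(\R) \hookrightarrow BV_2(\R)$ of \cite{BLCS}.
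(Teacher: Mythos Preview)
Your overall strategy---establish the $\Lambda^\beta_{2,2}$ estimate and then invoke Lemma~\ref{lem-q-variation} to descend to $V^q$---is exactly the paper's approach, which cites \cite[Theorem 3.1, Corollary 3.5, Proposition 3.9]{DK} for the $\Lambda^\beta$ estimate and replaces the embedding $\Lambda^\beta \hookrightarrow C_0(\R_+)$ used there by $\Lambda^\beta \hookrightarrow V^q$. Your handling of case~2 conflates two decompositions slightly: the paper simply writes $m(tA)f = \sum_n (m\dyad_n)(tA)f$ (decomposing $m$ in the $t$-variable) and applies case~1 to each piece with a triangle inequality in $V^q$, whereas the Paley--Littlewood decomposition of $f$ you describe is what happens \emph{inside} the proof of \cite[Theorem 3.1]{DK} for case~1; both routes lead to the same constant.

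There is one genuine slip in case~3: the implication in ``has its $V^q$-variation (and hence $\Lambda^\beta$-norm) controlled'' goes the wrong way, since $\Lambda^\beta \hookrightarrow V^q$ and not conversely. The results of \cite{LMX,HoMa} give only the $V^q$-variation of $t \mapsto e^{-tA}f$, not its $\Lambda^\beta$-norm, so you cannot route the semigroup piece through $\Lambda^\beta$. This is harmless for the theorem because only the $V^q$ bound is claimed: split $\|m(tA)f\|_{V^q} \leq \|\tilde m(tA)f\|_{V^q} + |m(0)|\,\|e^{-tA}f\|_{V^q}$, bound the first term via $\Lambda^\beta$ and case~2, and the second directly via \cite{LMX,HoMa}. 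Note also that the UMD-lattice-valued $q$-variation for the semigroup comes straight from \cite{HoMa}, not from a Hilbert-tensor-extension argument as in Lemma~\ref{lem-UMD-lattice-Hilbert-extension}.
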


\begin{proof}
The proof goes along the same lines as that of \cite[Corollary 3.5, Proposition 3.9]{DK}, but using the embedding $\Lambda^\beta \hookrightarrow V^q$ for $\beta > \frac12$ from Lemma \ref{lem-q-variation} in place of the embedding $\Lambda^\beta \hookrightarrow C_0(\R_+)$.
We recall the main ingredients for each of the three cases above.

1. First, consider $m$ compactly supported in $[\frac12,2]$.
Then the estimate follows from Lemma \ref{lem-q-variation} together with \cite[Theorem 3.1]{DK}.

2. If $m$ is not compactly supported but $\sum_{n \in \Z} \|m(2^n \cdot)\dyad_0\|_{W^c_2(\R)} < \infty$, then decompose
\[\norm{t \mapsto m(tA)f}_{L^p(Y(V^q))} \leq \sum_{n \in \Z} \norm{t \mapsto \dyad_n(tA)m(tA)f }_{L^p(Y(V^q))}\]
and use part 1. on the dyadically supported pieces, noting that the $V^q$-norm is invariant under dilations $f \mapsto f(t\cdot)$ for $t > 0$.

3. Then for those spectral multipliers not converging to $0$ at $0$, we decompose $m(t) = m_1(t) + m(0) \exp(-t)$ where one checks by the assumptions on $m$ that $m_1$ does satisfy
\[\sum_{n \in \Z} \|m_1(2^n \cdot)\dyad_0\|_{W^c_2(\R)} < \infty. \]
Now decompose 
\[ \norm{t \mapsto m(tA)f}_{L^p(Y(V^q))} \leq \norm{t \mapsto m_1(tA)f}_{L^p(Y(V^q))} + |m(0)| \norm{t \mapsto \exp(-tA)f}_{L^p(Y(V^q))} . \]
Use the second case to estimate the part with $m_1$.
Finally, use \cite[(1.3)]{LMX} (case $Y = \C$) or \cite[(1.3) arXiv version]{HoMa} (case of general $Y$) to estimate the $q$-variation of the semigroup.
\end{proof}

\begin{remark}
\label{rem-square-function}
Under the assumptions of Theorem \ref{thm-q-variation} above, we can also formulate a square function estimate for a family of spectral multipliers $(m_k(tA))_k$.
We then obtain with $c$ as in Theorem \ref{thm-q-variation},
\begin{align*}
\MoveEqLeft
\norm{ \left( \sum_k \norm{t \mapsto m_k(tA)f_k}_{V^q}^2 \right)^{\frac12} }_{L^p(Y)} \lesssim \\
& \left( \sup_k |m_k(0)| + \sup_k \norm{m_k' \cutoff_0}_{\Hor^{c-1}_2} + \sum_{n\geq 0} \sup_k \|m_k(2^n \cdot) \dyad_0 \|_{W^c_2(\R)} \right) \norm{ \left( \sum_k |f_k|^2 \right)^{\frac12} }_{L^p(Y)} .
\end{align*}
The proof goes along the same lines as that of Theorem \ref{thm-q-variation}, using \cite[Corollary 3.3]{DK} and the embedding $\Lambda^\beta \hookrightarrow V^q$ from Lemma \ref{lem-q-variation}.
\end{remark}

According to \cite[Corollary 3.6]{DK}, under the assumptions of Theorem \ref{thm-q-variation}, $m(tA)f(x,\omega)$ converges pointwise a.e. $(x,\omega) \in \Omega \times \Omega'$ to $m(0)Pf(x,\omega)$ as $t \to 0+$ or $t \to \infty$, where $P : L^p(Y) \to L^p(Y)$ denotes the projection onto the null-space of $A$.
The $q$-variation norm can be used to quantify this convergence.
Namely, for $a : \R_+ \to \C$ a function and $\lambda > 0$, we define the jump quantity $N(a,\lambda)$ to be the supremum of all integers $N \geq 0$ such that there is an increasing sequence
\[ 0 < s_1 < t_1 \leq s_2 < t_2 \leq \ldots \leq s_N < t_N \]
so that $|a(t_k) - a(s_k)| > \lambda$ for each $k = 1 , \ldots , N$.
It is clear from Definition \ref{def-q-variation} that $\lambda^q N(a,\lambda) \leq \|a\|_{V^q}^q$.
We thus obtain the following result for jump estimates.

\begin{cor}
\label{cor-jump}
Let the assumptions of Theorem \ref{thm-q-variation} hold.
Then for $2 < q < \infty$ and any spectral multiplier $m$ as in that proposition, if the function under the norm of the left hand side below is measurable, then
\[ \left\| (x,\omega) \mapsto N(m(tA)f(x,\omega),\lambda)^{\frac1q} \right\|_{L^p(Y)} \lesssim \frac{\|f\|_{L^p(Y)}}{\lambda} . \]
\end{cor}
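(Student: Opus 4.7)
The plan is to reduce the corollary to a purely pointwise inequality combined with the $q$-variational estimate from Theorem \ref{thm-q-variation}. The key observation, already highlighted in the text just before the statement of the corollary, is the elementary deterministic bound
\[ \lambda^q N(a,\lambda) \leq \|a\|_{V^q}^q \]
for any scalar function $a : \R_+ \to \C$ and any $\lambda > 0$. This is immediate from Definition \ref{def-q-variation}: an admissible sequence $s_1 < t_1 \leq s_2 < t_2 \leq \ldots \leq s_N < t_N$ witnessing $N$ jumps of size $> \lambda$ can be interleaved to produce an increasing sequence in the supremum defining $\|a\|_{V^q}$, each of whose $N$ corresponding increments contributes more than $\lambda^q$ to the sum.

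First, I would apply this pointwise bound with $a(t) = m(tA)f(x,\omega)$ for each fixed $(x,\omega) \in \Omega \times \Omega'$. Taking $q$-th roots yields
\[ N\bigl(m(tA)f(x,\omega), \lambda\bigr)^{1/q} \leq \frac{1}{\lambda} \bigl\| t \mapsto m(tA)f(x,\omega) \bigr\|_{V^q}. \]
This is valid for every $(x,\omega)$ where the left-hand side is defined; the almost-everywhere finiteness and measurability are supplied respectively by Theorem \ref{thm-q-variation} and by the hypothesis in the statement of the corollary.

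Second, I would take the $L^p(Y)$ norm on both sides. Because $Y = Y(\Omega')$ is a K\"othe function space and hence $L^p(\Omega,Y)$ respects pointwise modulus inequalities of scalar functions of $(x,\omega)$, and because $1/\lambda$ is a positive constant, this gives
\[ \bigl\| (x,\omega) \mapsto N(m(tA)f(x,\omega),\lambda)^{1/q} \bigr\|_{L^p(Y)} \leq \frac{1}{\lambda} \bigl\| \, \|t \mapsto m(tA)f\|_{V^q} \bigr\|_{L^p(Y)}. \]
The $q$-variational estimate of Theorem \ref{thm-q-variation} bounds the right-hand side by $C_m \lambda^{-1} \|f\|_{L^p(Y)}$, which is exactly the claimed inequality.

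There is no genuine obstacle beyond what is already contained in Theorem \ref{thm-q-variation}; the only subtle point is the measurability assumption, which the statement takes for granted and which in any case follows in concrete settings from mild separability/continuity properties of $t \mapsto m(tA)f$ established en route to the $q$-variation bound.
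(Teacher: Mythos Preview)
Your proof is correct and follows exactly the approach the paper intends: the paper simply notes the pointwise inequality $\lambda^q N(a,\lambda) \leq \|a\|_{V^q}^q$ immediately before the corollary and states ``We thus obtain the following result for jump estimates'' without further argument. Your write-up makes this deduction explicit and handles the monotonicity of the $L^p(Y)$ lattice norm and the measurability hypothesis appropriately.
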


\section{$q$-variation for spherical and euclidean ball means in $\R^d$}
\label{sec-q-variation-spherical}

In this section, we consider for $t > 0$, $d \in \N$, $1 < p < \infty$ and $f \in S(\R^d)$ a Schwartz function,
\begin{align}
A_t f(x) & = \frac{1}{|S_d|} \int_{S_d}f(x - t y) d\sigma(y), \label{equ-def-spherical-mean} \\
M_t f(x) & = \frac{1}{|B_d|} \int_{B_d}f(x - t y) dy = \frac{1}{|B(x,t)|} \int_{B(x,t)} f(y) dy, \label{equ-def-ball-mean}
\end{align}
where $S_d$ (resp. $B_d$) denotes the unit sphere (resp. the unit euclidean ball) $\subseteq \R^d$, $|S_d|$ (resp. $|B_d|$) denotes the surface measure of $S_d$ (resp. the Lebesgue measure of $B_d$) and $d\sigma(y)$ (resp. $dy$) denotes surface measure (resp. euclidean measure).
The definitions \eqref{equ-def-spherical-mean} and \eqref{equ-def-ball-mean} extend literally to $f \in S(\R^d) \otimes Y$ when $Y$ is a UMD lattice.
They moreover extend by density to $f \in L^p(\R^d,Y)$ once a priori boundedness of $A_t$ and $M_t$ on $L^p(\R^d,Y)$ is clarified.
In this section, we shall show $q$-variation results of $A_t$ and $M_t$ on $L^p(\R^d,Y)$.
We recall the following results from the literature.

\begin{thm}
\begin{enumerate}
\item (See \cite[Theorem 1.4(i)]{JSW}) Let $2 < q < \infty$, $d \geq 2$, $\frac{d}{d-1} < p \leq 2d$ and $f \in S(\R^d)$.
There exists a constant $C_{p,q,d}$ independent of $f$ such that
\begin{equation}
\label{equ-JSW}
\|t \mapsto A_tf\|_{L^p(\R^d,V^q)} \leq C_{p,q,d} \|f\|_{L^p(\R^d)} .
\end{equation}
Moreover, if $p > 2d$ and $q > \frac{p}{d}$, then \eqref{equ-JSW} holds and conversely, if \eqref{equ-JSW} holds, then necessarily $q \geq \frac{p}{d}$.
\item (See \cite[Theorem 1.3]{BMSW})  Let $2 < q < \infty$, $d \geq 1$, $1 < p < \infty$ and $f \in S(\R^d)$.
There exists a constant $C_{p,q}$ independent of $f$ and the dimension $d$ such that
\begin{equation}
\label{equ-BMSW}
\|t \mapsto M_tf\|_{L^p(\R^d,V^q)} \leq C_{p,q} \|f\|_{L^p(\R^d)} .
\end{equation}
Moreover, the same result holds if the euclidean $\ell^2_d$ ball in the definition \eqref{equ-def-ball-mean} is replaced by an $\ell^r_d$-ball with $1 \leq r \leq \infty$.
\item (See \cite[Theorem 1.1]{HHL}) Let $2 < q < \infty$, $d \geq 1$, $1 < p < \infty$, $Y$ a UMD lattice and $f \in S(\R^d) \otimes Y$.
Then there exists a constant $C_{p,q,Y}$ independent of $f$ and the dimension $d$ such that
\begin{equation}
\label{equ-HHL}
\|t \mapsto M_tf\|_{L^p(\R^d,Y(V^q))} \leq C_{p,q,Y} \|f\|_{L^p(\R^d,Y)} .
\end{equation}
\end{enumerate}
\end{thm}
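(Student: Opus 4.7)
The plan is first to observe that each item of this theorem is a verbatim recall from the referenced paper, so that, strictly speaking, the proof consists of a citation of \cite{JSW,BMSW,HHL}; I nonetheless indicate briefly what strategy each source uses. For part (1), I would follow \cite{JSW} and realise $A_t = m(t\sqrt{-\Delta})$ with the Bessel multiplier $m(\lambda) = c_d \lambda^{-(d-2)/2} J_{(d-2)/2}(2\pi\lambda)$, then combine sharp oscillatory integral estimates with the local smoothing inequality of Mockenhaupt-Seeger-Sogge for the wave equation and Lepingle's variational inequality for martingales; the sharp threshold $q \geq p/d$ reflects the boundary of validity of the classical spherical maximal theorem. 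For parts (2) and (3), I would exploit the cleaner convolution representation $M_t f = f \ast (|B_d|^{-1} 1_{tB_d})$ and the dimension-free Fourier-analytic techniques for the ball maximal operator originating in Bourgain's work, upgraded to $q$-variational form in \cite{BMSW}; the UMD lattice extension in \cite{HHL} would transfer the scalar square function and discrete Hilbert transform estimates to the vector-valued setting via the UMD property, using cotype/type arguments to control the relevant Rademacher averages.

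The main obstacle in any unified, \emph{ab initio} treatment is the lack of smoothness: the Bessel-type multiplier for $A_t$ and the indicator-type kernel for $M_t$ are not regular enough to fit comfortably into the abstract H\"ormander framework with sharp dimensional parameters. A third, abstract route, which we in fact develop in Section \ref{sec-q-variation-spherical}, is to feed $m$ into our Theorem \ref{thm-q-variation}. This recovers item (1) and produces the new UMD lattice extension Theorem \ref{thm-intro-variation-spherical}, at the cost of a minimal dimension $d_0$ larger than the one given in (1); combining it with the method of rotations would further produce a version of (3), but only with a dimension-dependent constant, and therefore cannot match the dimension-free bound \eqref{equ-HHL}.
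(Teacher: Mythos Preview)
Your proposal is correct: the theorem is a recall of results from the literature, and the paper itself offers no proof beyond the citations \cite{JSW,BMSW,HHL}. Your sketches of the underlying strategies are reasonable and not required by the paper.

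One small correction to your side commentary: you write that combining Theorem \ref{thm-q-variation} with the method of rotations would only give a dimension-dependent constant for the ball means $M_t$. In fact, the paper's Proposition \ref{prop-HHL-alternative} does obtain a dimension-free constant $C_{p,q,Y}$ via exactly this route, valid for all $d > d_0$; the restriction is on the \emph{range} of dimensions (one needs $d$ large), not on the uniformity of the constant within that range. So the abstract approach does partially recover \eqref{equ-HHL} with the correct dimension-free feature, just not for small dimensions.
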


We shall prove the following extension of \eqref{equ-JSW} to the UMD lattice valued case.
In order \eqref{equ-JSW} to hold, we had a minimal dimension $d > d_1 = \max\left( \frac{p}{2}, \frac{p}{p-1}, \frac{p}{q} \right)$.
In Theorem \ref{thm-spherical-means}, we also need some minimal dimension, but this time, it also depends on the geometry of the UMD lattice.

\begin{thm}
\label{thm-spherical-means}
Let $2 < q < \infty$, $1 < p < \infty$, $Y$ be a UMD lattice and 
\begin{equation}
\label{equ-minimal-dimension}
d > d_0 := \frac{2 + \max\left(\frac12, \frac{1}{\type L^p(Y)} - \frac{1}{\cotype L^p(Y)}\right)}{\frac12 - \max\left( \left| \frac1p - \frac12 \right|, \left| \frac{1}{p_Y} - \frac12 \right|, \left| \frac{1}{q_Y} - \frac12 \right| \right)} ,
\end{equation}
where $p_Y \in (1,2]$ and $q_Y \in [2,\infty)$ stand for the convexity and concavity exponents of $Y$ (see \cite[Definition 1.d.3]{LTz}).
Let $f \in S(\R^d) \otimes Y$.
Then there exists a constant $C_{p,q,d,Y}$ independent of $f$ such that
\[ \|t \mapsto A_tf\|_{L^p(\R^d(Y(V^q)))} \leq C_{p,q,d,Y} \|f\|_{L^p(\R^d,Y)} . \]
\end{thm}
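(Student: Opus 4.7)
\emph{Proof plan.} The strategy is to realise $A_t$ as a spectral multiplier of $A = (-\Delta)^{1/2}$ and to apply Theorem~\ref{thm-q-variation}. By the classical computation of the Fourier transform of the normalised surface measure of $S^{d-1}$, one has $\widehat{A_tf}(\xi) = m(t|\xi|)\hat f(\xi)$ with
\[
m(\lambda) = c_d \lambda^{-(d-2)/2} J_{(d-2)/2}(2\pi\lambda),
\]
so that $A_tf = m(tA)f$ in the functional calculus of $A = (-\Delta)^{1/2}$. This operator is $0$-sectorial on $L^p(\R^d,Y)$, and the Poisson semigroup $e^{-tA}$ (subordinated from the Gaussian heat semigroup of $-\Delta$) is lattice positive and contractive on $L^p(\R^d)$. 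By \cite[Proposition~4.8]{DKK} applied to $-\Delta$ (or directly to $A$), the tensor extension $A \otimes \Id_Y$ admits a $\Hor^\alpha_2$ calculus on $L^p(\R^d,Y)$ for every
\[
\alpha > d\mu + \tfrac12, \qquad \mu := \max\Bigl(\bigl|\tfrac1p - \tfrac12\bigr|, \bigl|\tfrac1{p_Y} - \tfrac12\bigr|, \bigl|\tfrac1{q_Y} - \tfrac12\bigr|\Bigr).
\]
Since $m(0) = c_d \neq 0$ but the semigroup of $A$ is lattice positive and contractive on $L^p(\R^d)$, part~3 of Theorem~\ref{thm-q-variation} is the natural vehicle: it reduces the question to checking the two dyadic conditions on the symbol~$m$.

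The technical heart is the Sobolev estimate. Starting from the classical Bessel asymptotic
\[
J_\nu(z) = \sqrt{2/(\pi z)}\bigl[\cos(z - \nu\pi/2 - \pi/4) + O(z^{-1})\bigr] \qquad (z \to \infty),
\]
together with the recurrence $J_\nu'(z) = (\nu/z) J_\nu(z) - J_{\nu+1}(z)$ iterated, one obtains $|m^{(k)}(\lambda)| \lesssim_k \lambda^{-(d-1)/2}$ for $\lambda\geq 1$ and every $k\geq 0$. Because $\dyad_0$ is smoothly supported in $[1/2,2]$, a direct computation (passing to $W^c_2$ for integer $c$ and interpolating for fractional orders) gives, for $n \geq 1$,
\[
\|m(2^n\cdot)\dyad_0\|_{W^c_2(\R)} \lesssim 2^{n(c-(d-1)/2)},
\]
so $\sum_{n\geq 0}\|m(2^n \cdot)\dyad_0\|_{W^c_2(\R)}<\infty$ precisely when $c < (d-1)/2$. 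Since $J_\nu(z)/z^\nu$ is entire and even, $m$ is smooth on $[0,\infty)$, whence $\|m' \cutoff_0\|_{\Hor^{c-1}_2}<\infty$ is immediate from the compact support of $\cutoff_0$ in $(0,\infty)$.

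It remains to balance parameters. Theorem~\ref{thm-q-variation} requires
\[
c > \alpha + \nu + 1, \qquad \nu := \max\bigl(\tfrac12, \tfrac1{\type L^p(Y)} - \tfrac1{\cotype L^p(Y)}\bigr),
\]
while the Bessel estimate requires $c < (d-1)/2$. Letting $\alpha$ approach $d\mu + 1/2$ from above, an admissible $c$ exists iff
\[
d\mu + \tfrac12 + \nu + 1 < \tfrac{d-1}{2} \iff d\bigl(\tfrac12 - \mu\bigr) > 2 + \nu,
\]
i.e.\ iff $d > d_0$, as stated. The main obstacle in executing this plan is the careful Bessel analysis at fractional Sobolev orders (where one must pass to the Fourier side and control both the oscillatory principal term and the $O(z^{-1})$ remainder uniformly in $n$); once that is in place, the result follows by direct application of Theorem~\ref{thm-q-variation}.
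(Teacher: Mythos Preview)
Your proposal is correct and follows essentially the same route as the paper: identify $A_t$ as the spectral multiplier $m(t\sqrt{-\Delta})$ with $m(\lambda)=c_d\lambda^{-(d-2)/2}J_{(d-2)/2}(2\pi\lambda)$, verify the hypotheses of part~3 of Theorem~\ref{thm-q-variation} via Bessel estimates yielding $|m^{(k)}(\lambda)|\lesssim\lambda^{-(d-1)/2}$ and hence $\sum_{n\geq0}\|m(2^n\cdot)\dyad_0\|_{W^c_2}<\infty$ for $c<(d-1)/2$, invoke the $\Hor^\alpha_2$ calculus of $\sqrt{-\Delta}\otimes\Id_Y$ for $\alpha>d\mu+\tfrac12$, and balance the constraints to recover the threshold $d>d_0$. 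The only cosmetic difference is that the paper uses the clean identity $\frac{d}{d\lambda}(\lambda^{-\alpha}J_\alpha)=-\lambda^{-\alpha}J_{\alpha+1}$ together with $\sup_\lambda\sqrt\lambda|J_\alpha(\lambda)|<\infty$ in place of your asymptotic expansion plus recurrence, and handles the fractional $c$ by complex interpolation (\cite[Corollary~3.11]{DK}) rather than by a direct Fourier-side argument; both routes give the same bound.
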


\begin{proof}
The idea of the proof is the following.
The operator $A_t$ commutes with translations in $\R^d$, so is a Fourier multiplier associated to some symbol $m : \R^d \to \C$.
Since $A_t$ is rotation invariant, also $m$ is rotation invariant and thus, $A_t$ is a spectral multiplier of the Laplacian $- \Delta$ on $\R^d$ (or its square root $\sqrt{-\Delta}$).
For $\sqrt{-\Delta}$, a H\"ormander functional calculus on $L^p(\R^d,Y)$ space has been established in \cite{Hy1,GiWe}, see also \cite{DKK}.
So Theorem \ref{thm-q-variation} is available.
It will then suffice to check the finiteness of the norms of the specific spectral multiplier function $m$ imposed in this Theorem \ref{thm-q-variation}.

In details, we have $A_tf(x) = m(t\sqrt{-\Delta})f(x)$ for
\begin{equation}
\label{equ-spectral-multiplier-spherical-mean}
m(\lambda) = \frac{2\pi}{\lambda^{\frac{d-2}{2}}} J_{\frac{d-2}{2}}(2 \pi \lambda),
\end{equation}
where $J_{\alpha}$ denotes the Bessel function of order $\alpha$ \cite[p.~577]{Graf}.
Indeed, with $f_t(x) := f(tx)$ for $f$ say, a Schwartz function,
\begin{align*}
\MoveEqLeft
A_tf(x) \overset{\eqref{equ-def-spherical-mean}}{=} \frac{1}{|S_d|} \int_{S_d}f(x - t y) d\sigma(y) = \frac{1}{|S_d|} \int_{S_d} f_t\left(\frac{1}{t}x - y\right) d \sigma(y) \\
& = \frac{1}{|S_d|} f_t \ast \sigma\left(\frac{1}{t}x\right) = \frac{1}{|S_d|} \mathcal{F}^{-1}[ (f_t)\hat{\phantom{i}} \hat{\sigma}]\left(\frac{1}{t} x\right) \\
& \overset{\cite[p.~577]{Graf}}{=} \mathcal{F}^{-1}[(f_t) \hat{\phantom{i}} m]\left(\frac{1}{t}x\right) \\
& = \mathcal{F}^{-1}[\hat{f} m(t|\cdot|)](x) = m(t\sqrt{-\Delta})f(x).
\end{align*}
We note the two following crucial formulae concerning the Bessel function:
\begin{align}
\sup_{\lambda \geq 0} & \sqrt{\lambda} \left|J_\alpha(\lambda) \right| < \infty \quad \left(\alpha > \frac12 \right) \label{equ-Bessel-1}, \\
\frac{d}{d\lambda} \left(\lambda^{-\alpha} J_\alpha(\lambda) \right) & = - \lambda^{-\alpha} J_{\alpha + 1}(\lambda) \quad \left(\alpha > \frac12 \right) \label{equ-Bessel-2},
\end{align}
where we refer for \eqref{equ-Bessel-1} to \cite[p.~238]{AAR} and for \eqref{equ-Bessel-2} to \cite[p.~573]{Graf}.
We next claim that for $c < \frac{d-1}{2}$, the spectral multiplier function satisfies the hypotheses of Theorem \ref{thm-q-variation}.
First we note that for $\cutoff_0$ a $C^\infty$ function with support in $(0,4]$ and equal to $1$ on $(0,2]$, $m(\lambda)\cutoff_0(\lambda)$ extends to a $C^\infty$ function on $\R$ \cite[p.~577]{Graf}.
Thus, $m \cutoff_0 \in C^1[0,1]$ and $m' \chi_0 \in \Hor^{c-1}_2$ for any $c$ since $m'$ has bounded derivatives on intervals of finite length, to any order.
To see this, note that the function $\lambda^{-\alpha}J_\alpha(\lambda)$ is $C^\infty$ on $(0,\infty)$ and continuous in $0$ being essentially the Fourier transform of surface measure, with $\alpha= \frac{d-2}{2}$.
Then so is also $m$.
Now \eqref{equ-Bessel-2} yields $\frac{d}{d\lambda}(\lambda^{-\alpha} J_\alpha(\lambda)) = - \lambda (\lambda^{-(\alpha+1)} J_{\alpha+1}(\lambda))$.
Thus, also $m'$ is continuous in $0$.
Now using \eqref{equ-Bessel-2} with $\alpha$ replaced by $\alpha + 1$ shows that also $m''$ is continuous in $0$.
Repeating this argument by incrementing $\alpha$ step by step shows that $m'$ has bounded derivatives on intervals of finite length, to any order.
In particular, $m(0)$ takes some finite value.
We now estimate $\sum_{n \geq 0} \|m(2^n\cdot) \dyad_0\|_{W^c_2(\R)}$ for $(\dyad_n)_n$ a partition of unity of $\R_+$.
We have according to \eqref{equ-Bessel-2} and \eqref{equ-Bessel-1} for $\lambda \geq \frac12$
\[ \left|\frac{d}{d\lambda}m(\lambda) \right| =  c \lambda^{-\frac{d-2}{2}} \left| J_{\frac{d}{2}}(2\pi \lambda) \right| \lesssim \lambda^{-\frac{d-1}{2}} . \]
Moreover,
\begin{align*}
\MoveEqLeft
\left| \frac{d^2}{d\lambda^2}m(\lambda) \right| = c \left| \frac{d}{d\lambda}\left( \lambda \lambda^{-\frac{d}{2}} J_{\frac{d}{2}}(2 \pi \lambda) \right) \right| \\
& \lesssim \lambda^{-\frac{d}{2}} \left| J_{\frac{d}{2}}(2 \pi \lambda) \right| + \lambda^{-\frac{d-2}{2}} \left|J_{\frac{d+2}{2}}(2 \pi \lambda) \right| \\
& \lesssim \lambda^{-\frac{d-1}{2}}.
\end{align*}
In the same manner, one inductively shows, using \eqref{equ-Bessel-2} and \eqref{equ-Bessel-1} that for $\lambda \geq \frac12$ and $k \in \N$, we have
$ \left| \frac{d^k}{d \lambda^k}m(\lambda)\right| \lesssim \lambda^{- \frac{d-1}{2}}$.
Therefore
\begin{align*}
\MoveEqLeft
\left(\int_{\frac12}^2 \left| \frac{d^k}{d\lambda^k}(m(2^n \cdot))|_\lambda \right|^2 d\lambda \right)^{\frac12} = 2^{nk} \left( \int_{\frac12}^2 \left|\frac{d^k}{d\lambda^k}m(2^n \lambda) \right|^2 d\lambda \right)^{\frac12} \\
& \lesssim 2^{nk} \left( \int_{\frac12}^2 |2^n\lambda|^{-2(\frac{d-1}{2})} d \lambda \right)^{\frac12} \\
& \cong 2^{nk} 2^{-n\frac{d-1}{2}},
\end{align*}
which is summable over $n \geq 0$ provided that $k < \frac{d-1}{2}$.
Moreover, by complex interpolation of $W^c_2(\R)$ spaces for non-integer $c > \frac12$ (see e.g. \cite[Corollary 3.11]{DK}), we obtain that for $c < \frac{d-1}{2}$, we have $\sum_{n \geq 0} \|m(2^n \cdot) \dyad_0\|_{W^c_2(\R)} < \infty$.

Now we want to apply Theorem \ref{thm-q-variation}.
To this end, we recall that the operator $A = \sqrt{- \Delta} \otimes \Id_Y$ has a $\Hor^\alpha_2$ calculus on $L^p(\R^d,Y)$ provided that
\[ \alpha > d \cdot \alpha_0(p,Y) + \frac12 , \]
where $\alpha_0(p,Y) = \max \left( \left| \frac1p - \frac12 \right|, \left| \frac{1}{p_Y} - \frac12 \right|, \left| \frac{1}{q_Y} - \frac12 \right| \right) \in [0,\frac12)$, $p_Y \in (1,2]$ and $q_Y \in [2,\infty)$ stand for the non-trivial convexity and concavity exponents of the UMD lattice $Y$.
See \cite[Remark 4.24]{DKK} first with $A = -\Delta$, and then noting (with $\beta = \frac12$) that $A^\beta$ has the same $\Hor^\alpha_2$ calculus as $A$ for any $\beta > 0$, since $m \mapsto m((\cdot)^\beta)$ is an isomorphism $\Hor^\alpha_2 \to \Hor^\alpha_2$.
Moreover, $\exp(-t\sqrt{-\Delta})$ is well-known to be lattice positive and contractive on $L^p(\R^d)$.
Combining the constraint on $c$ above coming from the spectral multiplier, the constraint on $\alpha$ above and the constraint $c >  \alpha + \max \left(\frac12, \frac{1}{\type L^p(Y)} - \frac{1}{\cotype L^p(Y)}\right) + 1$ from Theorem \ref{thm-q-variation}, we deduce from Theorem \ref{thm-q-variation} the finite $q$-variation
\[ \| t \mapsto A_tf\|_{L^p(\R^d,Y(V^q))} = \| t \mapsto m(t\sqrt{-\Delta})f\|_{L^p(\R^d,Y(V^q))} \lesssim \|f\|_{L^p(\R^d,Y)}, \]
provided that we have
\begin{align*}
c & < \frac{d-1}{2},\\
\alpha & > d \alpha_0(p,Y) + \frac12, \\
c & > \alpha + \max \left(\frac12, \frac{1}{\type L^p(Y)} - \frac{1}{\cotype L^p(Y)}\right) + 1.
\end{align*}
The combination of the three is possible if $\frac{d-1}{2}  > d \alpha_0(p,Y) + \frac12 + \max\left(\frac12, \frac{1}{\type L^p(Y)} - \frac{1}{\cotype L^p(Y)}\right) + 1$ so if $d \cdot \left( \frac12 - \alpha_0(p,Y) \right) > \frac12 + \frac12 + \max \left(\frac12, \frac{1}{\type L^p(Y)} - \frac{1}{\cotype L^p(Y)}\right) + 1$, in other words if
\[ d > d_0 := \frac{2 + \max\left(\frac12, \frac{1}{\type L^p(Y)} - \frac{1}{\cotype L^p(Y)}\right)}{\frac12 - \max\left( \left| \frac1p - \frac12 \right|, \left| \frac{1}{p_Y} - \frac12 \right|, \left| \frac{1}{q_Y} - \frac12 \right| \right)} . \]
Note that the denominator of $d_0$ is strictly positive since $Y$ has non-trivial concavity and convexity exponents, being a UMD lattice \cite[Corollary p.~216]{RdF}, \cite[p.~218-219]{TJ}.
\end{proof}

\begin{remark}
\label{rem-ball-means}
Theorem \ref{thm-spherical-means} immediately extends to the same result for the ball means $M_t$ from \eqref{equ-def-ball-mean} in place of the spherical means $A_t$, with the same constraint on the minimal dimension $d_0$.
That is, for $d > d_0$ with $d_0$ from \eqref{equ-minimal-dimension}, we have for any $f \in S(\R^d) \otimes Y$ with a constant $C_{p,q,d,Y}$ independent of $f$ that
\[ \|t \mapsto M_tf\|_{L^p(\R^d(Y(V^q)))} \leq C_{p,q,d,Y} \|f\|_{L^p(\R^d,Y)} . \]
Indeed, by Fubini we can write $M_tf(x,\omega) = \frac{d}{t^d} \int_0^t r^{d-1} A_rf(x,\omega) dr$.
Thus, for $(x,\omega) \in \R^d \times \Omega'$ fixed, we have
\begin{align*}
\MoveEqLeft
\left\| t \mapsto M_tf(x,\omega)\right\|_{V^q} = d \left\| t \mapsto t^{-d} \int_0^t r^{d-1} A_rf(x,\omega) dr\right\|_{V^q} \\
& = d \left\|t \mapsto \int_0^t \left( \frac{r}{t} \right)^d A_rf(x,\omega) \frac{dr}{r} \right\|_{V^q} \\
& = d \left\|t \mapsto \int_0^1 r^d A_{rt}f(x,\omega) \frac{dr}{r} \right\|_{V^q} \\
& \leq d \int_0^1 r^d \left\|t \mapsto A_{rt}f(x,\omega) \right\|_{V^q} \frac{dr}{r} \\
& = d \int_0^1 r^d \left\|t \mapsto A_tf(x,\omega)\right\|_{V^q} \frac{dr}{r} \\
& = d \frac{1}{d} \left\|t \mapsto A_tf(x,\omega)\right\|_{V^q}.
\end{align*}
Here we have used in the last but one step that the $V^q$ norm is dilation invariant, that is, $\|t \mapsto a_{rt}\|_{V^q} = \|t \mapsto a_t\|_{V^q}$ for $r > 0$.
This can be seen directly from the Definition \ref{def-q-variation} of the $q$-variation.
To conclude, it suffices to take $L^p(\R^d,Y)$ norms in the above inequality and to apply Theorem \ref{thm-spherical-means}.
\end{remark}

At the end of this section, we reprove \eqref{equ-HHL} for large dimensions, by the alternative proof of using our $q$-variational H\"ormander Functional Calculus Theorem \ref{thm-q-variation}.

\begin{prop}
\label{prop-HHL-alternative}
Let $Y$ be a UMD lattice.
Let $2 < q < \infty$, $1 < p < \infty$ and $d > d_0$ from \eqref{equ-minimal-dimension}.
Then there exists a constant $C_{p,q,Y}$ such that for any $f \in S(\R^d) \otimes Y$, we have
\[ \|t \mapsto M_t f\|_{L^p(\R^d,Y(V^q))} \leq C_{p,q,Y} \|f\|_{L^p(\R^d,Y)} . \]
\end{prop}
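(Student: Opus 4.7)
My plan is to apply Theorem \ref{thm-q-variation} directly to the operator $M_t$, viewed as a spectral multiplier of $\sqrt{-\Delta}$, mirroring the argument used for Theorem \ref{thm-spherical-means}. Writing $M_tf = f \ast \psi_t$ with $\psi_t = |B(0,t)|^{-1} 1_{B(0,t)}$ and using the classical formula for the Fourier transform of the indicator of the unit ball, one obtains $M_tf = \tilde m(t\sqrt{-\Delta})f$ with
\[ \tilde m(\lambda) = c\, \lambda^{-d/2} J_{d/2}(2\pi\lambda) , \]
analogous to \eqref{equ-spectral-multiplier-spherical-mean} but with the Bessel index shifted by one (consistent with the integration of $A_r$ in $r$ visible in Remark \ref{rem-ball-means}).

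Next I would verify the hypotheses of case 3 of Theorem \ref{thm-q-variation} for $\tilde m$. The asymptotics \eqref{equ-Bessel-1} and \eqref{equ-Bessel-2} yield inductively, exactly as in the proof of Theorem \ref{thm-spherical-means}, that $\tilde m$ extends smoothly to $0$, that $\tilde m' \cutoff_0 \in \Hor^{c-1}_2$ for every $c$, and that $|\tilde m^{(k)}(\lambda)| \lesssim \lambda^{-(d+1)/2}$ for $\lambda \geq 1/2$ and every $k \in \N$ --- one power better in decay than for spherical means. Via complex interpolation as in \cite[Corollary 3.11]{DK}, this gives $\sum_{n \geq 0} \|\tilde m(2^n \cdot ) \dyad_0\|_{W^c_2(\R)} < \infty$ whenever $c < (d+1)/2$. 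Combined with the $\Hor^\alpha_2$ calculus of $\sqrt{-\Delta}$ on $L^p(\R^d, Y)$ valid for $\alpha > d \cdot \alpha_0(p,Y) + 1/2$ and the Theorem \ref{thm-q-variation} constraint $c > \alpha + \max\!\left(1/2, 1/\type L^p(Y) - 1/\cotype L^p(Y)\right) + 1$, the same arithmetic as in Theorem \ref{thm-spherical-means} yields the threshold $d > d_0$ from \eqref{equ-minimal-dimension}. A shorter alternative is to combine the pointwise inequality $\|t \mapsto M_tf(x,\omega)\|_{V^q} \leq \|t \mapsto A_t f(x,\omega)\|_{V^q}$ of Remark \ref{rem-ball-means} with Theorem \ref{thm-spherical-means}.

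The main obstacle I foresee concerns the claimed dimension-free constant $C_{p,q,Y}$: the $\Hor^\alpha_2$ calculus norm of $\sqrt{-\Delta}$ on $L^p(\R^d,Y)$ and the sum $\sum_{n \geq 0} \|\tilde m(2^n\cdot)\dyad_0\|_{W^c_2(\R)}$ both a priori grow with $d$, so the routes above only deliver $C_{p,q,d,Y}$. To genuinely attain dimension-freeness --- the feature advertised in the introduction --- one would invoke the method of rotations as in \cite{HHL,BMSW}: write $M_tf$ as an average over $u \in S^{d-1}$ of a one-dimensional operator $T_t^u$ acting along lines in direction $u$, reduce by Fubini (decomposing $\R^d = \R u \oplus u^\perp$) to a $V^q$-$L^p(\R,Y)$ estimate in dimension one (where the $\Hor^\alpha_2$ calculus of $\sqrt{-\Delta_\R}$ carries a genuinely dimension-independent constant), and recombine via Minkowski's inequality over $u \in S^{d-1}$. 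The delicate point in this approach is verifying the needed 1D hypotheses of Theorem \ref{thm-q-variation} uniformly in $d$, since the 1D symbol $\tilde m_d(\lambda) = d \int_0^1 r^{d-1} \cos(2\pi r\lambda)\,dr$ concentrates near $r=1$ as $d \to \infty$; a careful integration-by-parts together with the $d > d_0$ condition should provide the necessary uniform control.
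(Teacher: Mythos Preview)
Your first two routes are correct but, as you note, only give $C_{p,q,d,Y}$, which is not what the proposition claims. The gap is in your third route. Reducing to dimension $1$ via rotations does not work with the H\"ormander-calculus machinery: the one-dimensional symbol $\tilde m_d(\lambda) = d\int_0^1 r^{d-1}\cos(2\pi r\lambda)\,dr$ concentrates to $\cos(2\pi\lambda)$ as $d\to\infty$, a function with no decay at infinity, so $\sum_{n\geq 0}\|\tilde m_d(2^n\cdot)\dyad_0\|_{W^c_2(\R)}$ cannot stay bounded uniformly in $d$ (one integration by parts only yields $|\tilde m_d(\lambda)|\lesssim d/\lambda$, and higher derivatives are worse). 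The hypothesis $d>d_0$ offers no rescue here, since it constrains the ambient dimension, not this $d$-dependent one-dimensional symbol.

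The paper's proof uses rotations too, but with a different target: it descends from $\R^d$ not to $\R^1$ but to a \emph{fixed} dimension $d'$ chosen once and for all with $d'>d_0$. Concretely, one writes $M_tf = \int_{O(d)} A^\theta_{d',t}f\,d\mu(\theta)$, where $A^\theta_{d',t}$ is a weighted ball mean acting only in a $d'$-dimensional slice (see \eqref{equ-2-proof-prop-HHL-alternative} and the operators $A_{d,k,t}$, $A_{d',t}^\theta$ introduced there). A short computation as in Remark \ref{rem-ball-means} bounds the $V^q$-norm of $t\mapsto A^\theta_{d',t}f(x,\omega)$ pointwise by that of the spherical mean $t\mapsto A_tf(x,\omega)$ in $\R^{d'}$; Theorem \ref{thm-spherical-means} then applies in dimension $d'$ with constant $C_{p,q,d',Y}$. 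Since $d'$ depends only on $p,q,Y$ through $d_0$, this constant is genuinely dimension-free in $d$. The missing idea in your proposal is precisely this choice of intermediate target dimension $d'>d_0$ rather than $1$.
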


\begin{proof}
The proof is based on the dimension \emph{dependent} estimate of the $q$-variation of $t \mapsto A_t$ from Theorem \ref{thm-spherical-means}, together with the method of descent in the spirit of the Calder\'on-Zygmund method of rotations that we already employed in \cite{DeKr2}.
We introduce the following further mean operators for $d',d,k \in \N$, $t > 0$, $\theta \in O(d)$ an orthogonal mapping and $f \in S(\R^d)$:
\begin{align*}
A_{d,k,t}f(x) & = \frac{\int_{|y| \leq t} f(x-y) |y|^k dy}{\int_{|y| \leq t} |y|^k dy} \\
A_{d',t}^\theta f(x) & = \frac{\int_{|y_{d'}| \leq t} f(x - \theta(y_{d'},0)) |y_{d'}|^{d-d'} dy_{d'}}{\int_{|y_{d'}| \leq t} |y_{d'}|^{d-d'} dy_{d'}}.
\end{align*}
Here we write $y = (y_{d'},y_{d-d'}) \in \R^d = \R^{d'} \times \R^{d-d'}$.
Again, $A_{d,k,t}$ and $A_{d',t}^\theta$ admit obvious extensions to $f \in S(\R^d) \otimes Y$.
Then, for such an $f$ and $(x,\omega) \in \R^d \times \Omega'$ fixed, we obtain similarly to Remark \ref{rem-ball-means}
\begin{align}
\MoveEqLeft
\| t \mapsto A_{d,k,t}f(x,\omega) \|_{V^q} = c_d \left\| t\mapsto t^{-(d+k)} \int_0^t r^{d-1} r^k A_r f(x,\omega) dr \right\|_{V^q} \nonumber \\
& = (d+k) \left\| t \mapsto \int_0^t \left(\frac{r}{t}\right)^{d+k} A_r f(x,\omega) \frac{dr}{r} \right\|_{V^q} \nonumber \\
& = (d+k) \left\| t \mapsto \int_0^1 r^{d+k} A_{rt} f(x,\omega) \frac{dr}{r} \right\|_{V^q} \nonumber \\
& \leq (d+k) \int_0^1 r^{d+k} \left\|t \mapsto A_{rt} f(x,\omega) \right\|_{V^q} \frac{dr}{r} \nonumber \\
& \leq \left\|t \mapsto A_tf(x,\omega)\right\|_{V^q}. \label{equ-3-proof-prop-HHL-alternative}
\end{align}
Next we show that for $f \in S(\R^d)$,
\begin{equation}
\label{equ-1-proof-prop-HHL-alternative}
A_{d',t}^\Id f(x) = A_{d',d-d',t}\left(f  \circ g_{x_{d-d'}}\right)(x_{d'}) ,
\end{equation}
where
\[ g_{x_{d-d'}} : \R^{d'} \to \R^d, \: x_{d'} \mapsto (x_{d'},x_{d-d'}) . \]
We have
\begin{align*}
A_{d',t}^\Id f(x) & = \frac{\int_{|y_{d'}| \leq t} f(x - (y_{d'},0)) |y_{d'}|^{d-d'} dy_{d'}}{\int_{|y_{d'}| \leq t} |y_{d'}|^{d-d'} dy_{d'}} \\
& =  \frac{\int_{|y_{d'}| \leq t} f \circ g_{x_{d-d'}}(x_{d'} - y_{d'}) |y_{d'}|^{d-d'} dy_{d'}}{\int_{|y_{d'}| \leq t} |y_{d'}|^{d-d'} dy_{d'}} \\
& = A_{d',d-d',t} \left(f \circ g_{x_{d-d'}}\right)(x_{d'}),
\end{align*}
which shows \eqref{equ-1-proof-prop-HHL-alternative}.
We further note from \cite[(7)]{DeKr2},
\begin{equation}
\label{equ-2-proof-prop-HHL-alternative}
M_tf(x) = \int_{O(d)} A_{d',t}^\theta f(x)d\mu(\theta) ,
\end{equation}
where $d\mu$ denotes the normalized Haar measure of the compact group $O(d)$.
Thus, we obtain for $f \in S(\R^d) \otimes Y$, where $A_t$ denotes this time the spherical mean from \eqref{equ-def-spherical-mean}, but in dimension $d' < d$,
\begin{align*}
\MoveEqLeft
\left\| \| t \mapsto M_tf(x,\omega) \|_{V^q} \right\|_{L^p(\R^d,Y)} \overset{\eqref{equ-2-proof-prop-HHL-alternative}}{=} \left\| \left\| t \mapsto \int_{O(d)} A_{d',t}^\theta f(x,\omega) d\mu(\theta) \right\|_{V^q} \right\|_{L^p(\R^d,Y)} \\
& = \left\| \left\| t \mapsto \int_{O(d)} A_{d',t}^{\Id} \underset{=: \tilde{f}}{\underbrace{(f \circ \theta)}} \circ \theta^{-1}(x,\omega) d\mu(\theta) \right\|_{V^q} \right\|_{L^p(\R^d,Y)}\\
& \leq \int_{O(d)} \left\| \left\| t \mapsto A_{d',t}^{\Id} \tilde{f}(\cdot,\omega) \right\|_{V^q} \circ \theta^{-1} \right\|_{L^p(\R^d,Y)} d\mu(\theta) \\
& \overset{\eqref{equ-1-proof-prop-HHL-alternative}}{=} \int_{O(d)} \left\| \left\| \left\| t \mapsto A_{d',d-d',t} \left(\tilde{f} \circ g_{x_{d-d'}}\right) (x_{d'},\omega) \right\|_{V^q} \right\|_{L^p(\R^{d'},Y)} \right\|_{L^p(\R^{d-d'})} d\mu(\theta) \\
& \overset{\eqref{equ-3-proof-prop-HHL-alternative}}{\leq} \int_{O(d)} \left\| \left\| \left\| t \mapsto A_t \left(\tilde{f} \circ g_{x_{d-d'}}\right)(x_{d'},\omega) \right\|_{V^q} \right\|_{L^p(\R^{d'},Y)} \right\|_{L^p(\R^{d-d'})} d\mu(\theta) \\
& \overset{\mathrm{Theorem}\, \ref{thm-spherical-means}}{\leq} C_{p,q,d',Y}\int_{O(d)} \left\| \left\| \left( \tilde{f} \circ g_{x_{d-d'}} \right) (x_{d'},\omega) \right\|_{L^p(\R^{d'},Y)} \right\|_{L^p(\R^{d-d'})} d\mu(\theta) \\
& = C_{p,q,d',Y}\int_{O(d)} \|\tilde{f}\|_{L^p(\R^d,Y)} d\mu(\theta) \\
& = C_{p,q,d',Y}\|f\|_{L^p(\R^d,Y)}.
\end{align*}
Here we have applied Theorem \ref{thm-spherical-means} with dimension $d' > d_0$ from \eqref{equ-minimal-dimension}.
Thus the proposition follows for $d \geq d' > d_0$.
\end{proof}

\section{Schr\"odinger and wave maximal estimates}
\label{sec-wave}

In the next two sections, we will prove maximal estimates for the operator $\exp(itA)$, where $A$ is as before a semigroup generator acting on $L^p(Y)$ with a H\"ormander functional calculus.
As an application, we shall obtain pointwise continuity of the function $t \mapsto \exp(itA)f(x,\omega)$ with $f$ belonging to the domain of a fractional power of $A$ inside $L^p(Y)$.
The guiding example is Carleson's problem \cite[Theorem p.24]{Car} on the pointwise convergence of the solution of the free Schr\"odinger equation to the initial data, 
\[\exp(it\Delta)f(x) \to f(x)\text{ as }t \to 0,\:\text{a.e. }x \in \R^d. \]
Carleson asked what is the optimal parameter $s > 0$ such that $f$ belonging to the Sobolev space $H^s(\R^d) = \{ f \in L^2(\R^d) : \: \hat{f}(\xi) (1 + |\xi|^2)^{s/2} \in L^2(\R^d) \}$ guarantees that this pointwise convergence holds true.
A key point to prove pointwise convergence in Carleson's problem is the maximal estimate 
\[ \norm{ \sup_{0 < t < 1} |\exp(it\Delta)f| }_{L^2(B(0,1))} \leq C_s \norm{f}_{H^s(\R^d)}. \]
Later on, variants of this problem and the maximal estimate, also for the wave equation with $A = \sqrt{-\Delta}$ were studied by many authors, see e.g. \cite{RV1,RV2,Rog,BeGo,DeGu}.
Carleson found for his problem that in the one-dimensional case, $s > \frac14$ is sufficient and no $s < \frac18$ can be sufficient.
Later on, in particular in the last 10 years, many mathematicians contributed to the solution of Carleson's problem \cite{DaKe,Sjo,Veg,TVV,Lee,Bou13,Bou16,DGL,LuRo,DGLZ} (the parameter $s > \frac{d}{2(d+1)}$ is now known to be sufficient and no $s < \frac{d}{2(d+1)}$ is sufficient).
This has also opened the way to new problems such as to describe the Hausdorff dimension of the set of divergence in case of small values of $s$, which is beyond the scope of the present article.
In the following two sections we will contribute to both the maximal estimate $\norm{ \sup_t |\exp(itA)f|}_{L^p(Y)} < \infty$ and the pointwise convergence of $\exp(itA)f(x) \to f(x)$ as $t \to 0$,
in the quite general setting of operators $A$ as in Section \ref{sec-q-variation} before.

First, in this section, we show that Theorem \ref{thm-main-ft} below, which is a variant of Theorem \ref{thm-q-variation} gives to some extent maximal estimates for Schr\"odinger and wave operators, under rather general conditions on $A$, see Corollary \ref{cor-wave}.
The underlying question is the following.
Suppose that $A = - \Delta$ is the usual Laplacian operator on $L^p(\R^d)$ or that $A = \sqrt{-\Delta}$.
Then $t \mapsto u(t) = \exp(it\Delta)f$ is the solution of the Schr\"odinger equation with initial data $f \in L^p(\R^d)$, so
\[ \begin{cases} i \frac{\partial}{\partial t} u(t) & = - \Delta u(t) \\
u(0) & = f \end{cases}. \]
In the same way, $t \mapsto u(t) = \exp(it\sqrt{-\Delta})f$ is the solution of the wave equation with initial data $u(0) = f \in L^p(\R^d)$ and $u'(0) = i\sqrt{-\Delta} f$, so
\[ \begin{cases} \frac{\partial^2}{\partial t^2} u(t) & = \Delta u(t) \\
u(0) & = f \\
u'(0) & = i\sqrt{-\Delta} f
\end{cases}. \]
In \cite{RV1}, the authors investigate the boundedness of the associated Schr\"odinger maximal operators
\[ S^*f = \sup_{0 < t < 1}\left| \exp(it\Delta) f \right| \quad \text{and} \quad S^{**}f = \sup_{t \in \R} \left| \exp(it\Delta)f \right| . \]
Moreover, in \cite[(4),(5)]{RV2}, these authors study the boundedness of the wave maximal operators
\[ S^*f = \sup_{0 < t < 1}\left| \exp(\pm it\sqrt{-\Delta}) f \right| \quad \text{and} \quad S^{**}f = \sup_{t \in \R} \left| \exp(it\sqrt{-\Delta})f \right| . \]
Here, typical bounds are between spaces $S^*, S^{**} : H^s(\R^d) \to L^p(\R^d)$ \cite{RV1,RV2}.
For further results on maximal estimates of Schr\"odinger and wave operators, we refer to \cite{Rog,BeGo,DeGu}.
Moreover, the Sobolev space admits a description $H^s(\R^d) = \{ f \in L^2(\R^d), \: (1 - \Delta)^{\frac{s}{2}}f \in L^2(\R^d) \}$, so that the above maximal operator estimates read as follows
\begin{align}
\left\| \sup_{0 < t < 1} |\exp(\pm it A)f| \, \right\|_{L^p(\R^d)} & \lesssim \|(1 + A)^{s} f\|_{L^2(\R^d)} \label{equ-1-Schrodinger-wave} \\
\left\| \sup_{t \in \R}|\exp(it A)f| \, \right\|_{L^p(\R^d)} & \lesssim \|(1 + A)^{s} f\|_{L^2(\R^d)}, \label{equ-2-Schrodinger-wave}
\end{align}
for $A = - \Delta$ and $A = \sqrt{-\Delta}$, the right choices of $s$ and $p$ depending on the dimension $d$.
In this section, we shall prove an analogue of \eqref{equ-1-Schrodinger-wave} and \eqref{equ-2-Schrodinger-wave} for semigroup generators $A = A_0 \otimes \Id_Y$ such that the semigroup of $\exp(-tA_0)$ is positive and contractive on $L^p$, and such that $A$ has a H\"ormander calculus on $L^p(Y)$.
See Corollary \ref{cor-wave}.
Note that it is well-known that $A = -\Delta$ and thus also $A = \sqrt{-\Delta}$ do generate a positive contractive semigroup on $L^p(\R^d)$ and do have a H\"ormander $\Hor^\alpha_2$ functional calculus on $L^p(\R^d)$ for $\alpha > \frac{d}{2}$ \cite{Hor}.
Thus our quite general assumptions on $A$ are in particular satisfied in the above examples from \cite{RV1,RV2}.
In our framework, the norm on the left hand side of \eqref{equ-1-Schrodinger-wave} and \eqref{equ-2-Schrodinger-wave} will be in $L^p(Y)$.
Also it appears more natural that the $L^2$ space on the right hand side is replaced by $L^p(Y)$.
Our Sobolev space will be abstractly modelled as a domain of a power of $A$ as the right hand sides of \eqref{equ-1-Schrodinger-wave} and \eqref{equ-2-Schrodinger-wave} indicate.
The price we have to pay for this generality is that instead of suprema over $0< t < 1$ or $t \in \R$ as in \eqref{equ-1-Schrodinger-wave} or \eqref{equ-2-Schrodinger-wave}, we can only handle $t \in [a,b] \subseteq (0,\infty)$ compactly (via a test function $\psi_0 \in C^\infty_c(\R_+)$).
Also our parameter $\delta$ in \eqref{equ-3-Schrodinger-wave} below will be higher (that is, worse) than the parameter $s$ that obtained \cite{RV1,RV2} in \eqref{equ-1-Schrodinger-wave} and  \eqref{equ-2-Schrodinger-wave}.
We then shall get in Corollary \ref{cor-wave}
\begin{equation}
\label{equ-3-Schrodinger-wave}
\left\| \sup_{t > 0} \left|  \psi_0(t) \exp(itA) f \right| \, \right\|_{L^p(Y)} \lesssim_{\psi_0} \norm{(1+A)^\delta f}_{L^p(Y)} \quad (f \in D(A^\delta))
\end{equation}
for the correct choice of $\delta$ depending on the H\"ormander calculus order of $A$ and the geometry of $Y$.
The constant of the estimate depends on $\psi_0$, in particular its support.

The following Theorem is a variant of \cite[Theorem 3.1]{DK}, where elements $f \in L^p(Y)$ are replaced by functions $f_t \in L^p(Y(\Lambda^\beta))$.
Compared to \cite[Theorem 3.1]{DK}, it has a different value for the derivation exponent $c$ and involves the $W^c_1(\R)$ norm in place of the $W^c_2(\R)$ norm.
\
\begin{thm}
\label{thm-main-ft}
Let $Y$ be a UMD lattice, $1 < p < \infty$ and $(\Omega,\mu)$ a $\sigma$-finite measure space.
Let $\beta \geq 0$.
Let $A$ be a $0$-sectorial operator on $L^p(Y)$.
Assume that $A$ has a $\Hor^\alpha_2$ calculus on $L^p(Y)$ for some $\alpha > \frac12$.
Consider
\[ c > \alpha + \max\left(\frac12,\frac{1}{\type L^p(Y)} - \frac{1}{\cotype L^p(Y)}\right) + 1 +
 \beta. \]
Let $m$ be a spectral multiplier such that
\begin{enumerate}
\item $m \in W^{c}_1(\R)$ and $\supp (m) \subseteq [\frac12,2]$ or, more generally,
\item $m(0) = 0$, $m(2^n \cdot)\dyad_0 \in W^c_1(\R)$ for all $n \in \Z$ and $\sum_{n \in \Z} \|m(2^n \cdot)\dyad_0\|_{W^c_1(\R)} < \infty$, where $(\dyad_n)_{n \in \Z}$ is a partition of unity of $\R_+$.
\end{enumerate}
Then for $f_t \in L^p(Y(\Lambda^\beta_{2,2}(\R_+)))$, we have
\begin{equation}
\label{equ-1-prop-main-ft}
\| t \mapsto m(tA)f_t\|_{L^p(Y(\Lambda^\beta_{2,2}(\R_+)))} \leq C_m \| t \mapsto f_t\|_{L^p(Y(\Lambda^\beta_{2,2}(\R_+)))},
\end{equation}
where
\begin{enumerate}
\item $C_m \leq C \|m\|_{W^{c}_1(\R)}$,
\item $C_m \leq C \sum_{n \in \Z} \norm{m(2^n\cdot)\dyad_0}_{W^c_1(\R)}$.
\end{enumerate}
\end{thm}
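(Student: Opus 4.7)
The plan is to adapt the scheme of the proof of \cite[Theorem 3.1]{DK}, whose essence is the combination of the wave representation formula (Lemma \ref{lem-representation-formula-wave-operators}) with the $R$-bounded $\Hor^\gamma_2$ calculus (Proposition \ref{prop-Hormander-calculus-to-R-Hormander-calculus}) and the Paley--Littlewood decomposition (Lemma \ref{lem-Hormander-calculus-Paley-Littlewood}), now accommodating a $t$-varying family $f_t \in L^p(Y(\Lambda^\beta))$ in place of a fixed $f \in L^p(Y)$. The reason why the condition appears with $W^c_1$ and an extra $+1$ (rather than $W^c_2$ and $+\frac12$) in the derivation index is that we shall bound the wave integral by Minkowski instead of by a Plancherel-type argument.

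First I would reduce case 2 to case 1 by the dyadic splitting $m=\sum_{n\in\Z}m\phi_n$. Since $m\phi_n(\lambda)=[m(2^n\cdot)\phi_0](2^{-n}\lambda)$, a substitution $t=2^n s$ and the invariance of the $\Lambda^\beta$ norm under the dilation $s\mapsto 2^n s$ (Remark \ref{rem-Lambda-dilation-invariant}) reduce the estimate for $(m\phi_n)(tA)f_t$ to case 1 for the compactly supported symbol $m(2^n\cdot)\phi_0$ applied to the dilate $s\mapsto f_{2^n s}$, whose $L^p(Y(\Lambda^\beta))$-norm equals that of $f_t$. Summing over $n$ by the triangle inequality and absorbing into the hypothesis $\sum_n\|m(2^n\cdot)\phi_0\|_{W^c_1}<\infty$ gives case 2.

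For case 1 (with $\supp(m)\subset[\tfrac12,2]$), I would work with $f_t$ in a dense subspace (finite combinations $g(t)\phi(A)h$ with $g\in C^\infty_c(\R_+)$, $\phi\in C^\infty_c(\R_+)$, $h\in L^p(Y)$) to justify all manipulations, and use Lemma \ref{lem-representation-formula-wave-operators} pointwise in $t$:
\[ m(tA)f_t=\frac{1}{2\pi}\int_\R \hat{m}(s)\, e^{istA}f_t\, ds. \]
Minkowski's integral inequality in $L^p(Y(\Lambda^\beta))$ yields
\[ \|t\mapsto m(tA)f_t\|_{L^p(Y(\Lambda^\beta))} \le \frac{1}{2\pi}\int_\R |\hat{m}(s)|\,\|t\mapsto e^{istA}f_t\|_{L^p(Y(\Lambda^\beta))}\, ds. \]
The heart of the argument is then to show, with $\gamma:=\alpha+\max(\tfrac12,\tfrac{1}{\type L^p(Y)}-\tfrac{1}{\cotype L^p(Y)})$, the growth estimate
\[ \|t\mapsto e^{istA}f_t\|_{L^p(Y(\Lambda^\beta))} \lesssim (1+|s|)^{\gamma+\beta}\,\|t\mapsto f_t\|_{L^p(Y(\Lambda^\beta))}. \]
To prove this, I would first use Lemma \ref{lem-UMD-lattice-Hilbert-extension} to transfer the $R$-bounded $\Hor^\gamma_2$-calculus of $A$ (Proposition \ref{prop-Hormander-calculus-to-R-Hormander-calculus}) to the UMD lattice $L^p(Y(\Lambda^\beta))$, and then apply the Paley--Littlewood equivalence of Lemma \ref{lem-Hormander-calculus-Paley-Littlewood} in that lattice to reduce to controlling the Rademacher sum $\mathbb{E}\|\sum_n \epsi_n(\phi_n e^{ist\cdot})(A)f_t\|_{L^p(Y(\Lambda^\beta))}$. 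By the dilation invariance of the Hörmander norm one has $\|\phi_n e^{ist\cdot}\|_{\Hor^\gamma_2}\lesssim (1+|s 2^n t|)^\gamma$; combining $R$-boundedness on $L^p(Y(\Lambda^\beta))$ with the Sobolev-algebra behaviour of $\Lambda^\beta$ under multiplication by the $t$-weight $(1+|s 2^n t|)^\gamma$ (which contributes a factor $(1+|s|)^\beta$ via the modulation estimate in $W^\beta_2$) yields the announced bound.

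The final step is the Fourier-analytic inequality
\[ \int_\R|\hat{m}(s)|(1+|s|)^{\gamma+\beta}\, ds \;\le\; \|(1+|s|)^{\gamma+\beta+1+\epsi}\hat{m}\|_{L^\infty(\R)}\int_\R (1+|s|)^{-1-\epsi}ds \;\lesssim\; \|m\|_{W^c_1(\R)} \]
for any $c>\gamma+\beta+1$, which is where $W^c_1$ (rather than $W^c_2$) and the extra $+1$ enter. The main obstacle is the key estimate on $\|t\mapsto e^{istA}f_t\|_{L^p(Y(\Lambda^\beta))}$: since $e^{is\cdot}\notin\Hor^\alpha_2$, one cannot take operator norms fibrewise in $t$, so the $R$-bounded calculus must interact with the $\Lambda^\beta$-structure in $t$ via the dilation invariance of both the Hörmander norm and the $\Lambda^\beta$ norm simultaneously, in order to capture the polynomial growth in $s$ without losing control of the $t$-direction.
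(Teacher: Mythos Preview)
Your reduction of case 2 to case 1 via dyadic splitting and dilation invariance is fine and matches the paper. The gap is in case 1, at the ``heart of the argument'' estimate
\[
\|t\mapsto e^{istA}f_t\|_{L^p(Y(\Lambda^\beta))}\lesssim (1+|s|)^{\gamma+\beta}\|t\mapsto f_t\|_{L^p(Y(\Lambda^\beta))}.
\]
This inequality is false in general. Take $A=-\Delta$ on $L^p(\R^d)$ with $p\neq 2$, $Y=\C$: for fixed $t>0$ the operator $e^{-ist\Delta}$ is unbounded on $L^p$, so the left-hand side is infinite for generic $f_t$ while the right-hand side is finite. Restricting to your dense class $f_t=g(t)\phi(A)h$ does not save the argument, because $\|(e^{ist\cdot}\phi)(A)\|\lesssim (1+|s|t\max\supp\phi)^\gamma$, a bound that blows up with $\supp\phi$ and cannot be absorbed into $\|f_t\|_{L^p(Y(\Lambda^\beta))}$. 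The mechanism you propose (``$R$-boundedness on $L^p(Y(\Lambda^\beta))$ combined with the Sobolev-algebra behaviour of $\Lambda^\beta$ under multiplication by $(1+|s2^nt|)^\gamma$'') fails for the same reason: the weight $t\mapsto (1+|s|2^nt)^\gamma$ is unbounded on $\R_+$, hence is \emph{not} a pointwise multiplier on $\Lambda^\beta$ at all, and the operators $(\phi_ne^{ist\cdot})(A)$ depend on $t$ and are therefore not of the tensor form $B\otimes\Id_{\Lambda^\beta}$ to which Lemma \ref{lem-UMD-lattice-Hilbert-extension} applies.

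What is missing is the $t$-localization that the compact support of $m$ provides. Since $\supp(m)\subset[\tfrac12,2]$ and $\supp(\phi_n)\subset[2^{n-1},2^{n+1}]$, one has $m(tA)\phi_n(A)=0$ unless $2^nt$ lies in a fixed compact set, so one may freely insert a cutoff $\psi(2^nt)$ with $\psi\in C^\infty_c(\R_+)$. After the rescaling $t\mapsto 2^{-n}t$ this turns the problem into estimating the $R$-bound of the family $\{t\mapsto \psi(t)m(2^{-n}tA):n\in\Z\}$ on $L^p(Y(\Lambda^\beta))$. The wave expansion of these operators now factors as a \emph{compactly supported} function $h_s(t)$ of $t$ (acting as a bounded multiplier on $\Lambda^\beta$, with $\sup_s\|h_s\|_{M(\Lambda^\beta)}\lesssim\|m\|_{W^{\gamma+\delta+\beta'}_1}$) times a $t$-independent operator $T_s^n=\langle s\rangle^{-(\gamma+\delta)}(1+2^nA)^{-\gamma}e^{i2^nsA}$ to which Lemma \ref{lem-UMD-lattice-Hilbert-extension} does apply. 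This decoupling of the $t$-dependence from the operator family is the step your proposal lacks.
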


\begin{proof}
The proof is a variant of that of Theorem \ref{thm-q-variation}.
We indicate the changements.

1. Let $\psi \in C^\infty_c(\R_+)$ as before and let also $\widetilde{\psi} = \sum_{k \in F} \dyad_k \in C^\infty_c(\R_+)$, where $F \subseteq \Z$ is finite, such that $\widetilde{\psi}(t) = 1$ for $t \in \supp(\psi)$.
Indeed, since $\supp(\psi)$ stays away from $0$ and $\infty$, such a funcion $\widetilde{\psi}$ exists.
As in the proof of Theorem \ref{thm-q-variation}, we obtain for $t \mapsto f_t \in L^p(Y(\Lambda^\beta))$, with,
\begin{align}
\MoveEqLeft
\| t \mapsto m(tA)f_t \|_{L^p(Y(\Lambda^\beta))} \cong \left\| \left( \sum_{n \in \Z} \|t \mapsto m(tA) \dyad_n(A) f_t \|_{\Lambda^\beta}^2 \right)^{\frac12} \right\|_{L^p(Y)} \nonumber \\
& = \left\| \left( \sum_{n \in \Z} \|t \mapsto \psi(t) \widetilde{\psi}(t) m(2^{-n}tA) \dyad_n(A) f_{2^{-n}t} \|_{\Lambda^\beta}^2 \right)^{\frac12} \right\|_{L p(Y)} \nonumber \\
& \lesssim R\left(\left\{ t \mapsto \psi(t) m(2^{-n}tA) : \: n \in \Z \right\}_{L^p(Y(\Lambda^\beta)) \to L^p(Y(\Lambda^\beta))} \right)  \left\| \left( \sum_{n \in \Z} \| t \mapsto \widetilde{\psi}(t) \dyad_n(A) f_{2^{-n} t} \|_{\Lambda^\beta}^2 \right)^{\frac12} \right\|_{L^p(Y)}. \label{equ-4-proof-prop-main-ft}
\end{align}
Note that the operator family subject to $R$-boundedness is now in $B(L^p(Y(\Lambda^\beta)))$.
Moreover, since $\{ \dyad_n(A) : \: n \in \Z \}_{L^p(Y(\Lambda^\beta)) \to L^p(Y(\Lambda^\beta))}$ is $R$-bounded (since $\|\dyad_n\|_{\Hor^\gamma_2} \leq C$ and $A \ot \Id_{\Lambda^\beta}$ has an $R$-bounded $\Hor^\gamma_2$ calculus), the square function in \eqref{equ-4-proof-prop-main-ft} above estimates further to
\begin{align*}
\MoveEqLeft
\ldots \leq \left\| \left( \sum_{n \in \Z} \|t \mapsto \widetilde{\psi}(t) f_{2^{-n}t} \|_{\Lambda^\beta}^2 \right)^{\frac12} \right\|_{L^p(Y)} \\
&  = \left\| \left( \sum_{n \in \Z} \|t \mapsto \widetilde{\psi}(2^n t) f_t \|_{\Lambda^\beta}^2 \right)^{\frac12} \right\|_{L^p(Y)} \\
&  \leq \sum_{k \in F} \left\| \left( \sum_{n \in \Z} \|t \mapsto \dyad_k(2^n t) f_t \|_{\Lambda^\beta}^2 \right)^{\frac12} \right\|_{L^p(Y)} \\
&  \cong \left\| \left( \sum_{n \in \Z} \|t \mapsto \dyad_n(t) f_t \|_{\Lambda^\beta}^2 \right)^{\frac12} \right\|_{L^p(Y)}.
\end{align*}
Now for fixed $(x,\omega) \in \Omega \times \Omega'$, we have
\begin{align*}
\MoveEqLeft
\sum_{n \in \Z} \|t \mapsto \dyad_n(t) f_t(x,\omega)\|_{\Lambda^\beta}^2 \cong \E \left\| \sum_{n \in \Z} \epsi_n \cdot (t \mapsto \dyad_n(t) f_t(x,\omega)) \right\|_{\Lambda^\beta}^2 \\
& \cong \| t \mapsto f_t(x,\omega) \|_{\Lambda^\beta}^2,
\end{align*}
where the last step follows from the Paley-Littlewood equivalence \cite[Theorem 4.1]{KrW2} for the $0$-sectorial operator $A(t \mapsto f_t) = t \mapsto t \cdot f_t$ on the space $X = \Lambda^\beta$ which indeed has a Mihlin calculus as needed in this source.
We infer that 
\begin{equation}
\label{equ-1-proof-prop-main-ft}
\left\| \left( \sum_{n \in \Z} \|t \mapsto \widetilde{\psi}(t) \dyad_n(A) f_{2^{-n} t} \|_{\Lambda^\beta}^2 \right)^{\frac12} \right\|_{L^p(Y)} \lesssim \|t \mapsto f_t\|_{L^p(Y(\Lambda^\beta))} .
\end{equation}
It remains to estimate the $R$-bound of
\begin{equation}
\label{equ-2-proof-prop-main-ft}
\left\{ t \mapsto \psi(t) m(2^{-n}tA) : \: n \in \Z \right\}_{L^p(Y(\Lambda^\beta)) \to L^p(Y(\Lambda^\beta))} .
\end{equation}
Develop $\psi(t)m(2^n t A)f$ as in \eqref{equ-develop-in-wave}.
Note that $\Lambda^\beta$ is a Hilbert space (isometric to $\ell^2$).
Note also that Rademacher sums in $L^p(Y)$ are equivalent to square sums in the sense of \eqref{equ-Rademacher-square}.
Then use that by Lemma \ref{lem-UMD-lattice-Hilbert-extension}, $A \otimes \Id_{\Lambda^\beta}$ has an $R$-bounded $\Hor^\gamma_2$ calculus on $L^p(Y(\Lambda^\beta))$ to get the $R$-bounded operator family $\{ (1 + 2^n A)^\gamma \phi(2^n A) : \: n \in \Z \}_{L^p(Y(\Lambda^\beta)) \to L^p(Y(\Lambda^\beta))}$.
Then as in the proof of Theorem \ref{thm-q-variation}, we are left to show that
\[ \left\{ \frac{1}{2\pi} \int_\R h_s \langle s \rangle^{-(\gamma + \delta)} ( 1+  2^n A)^{-\gamma} \exp(i2^n sA) ds :\: n \in \Z \right\}_{L^p(Y(\Lambda^\beta)) \to L^p(Y(\Lambda^\beta))} \]
is an $R$-bounded operator family, but this time as operators $L^p(Y(\Lambda^\beta)) \to L^p(Y(\Lambda^\beta))$.
Here $h_s \in \Lambda^\beta$ is the function from \eqref{equ-h_s}.
Write in short $T_s^n = \frac{1}{2\pi} \langle s \rangle^{-(\gamma + \delta)}(1 + 2^n A)^{-\gamma} \exp(i2^n s A)$.
Then for $f_1,\ldots,f_N \in L^p(Y(\Lambda^\beta))$ and any $n_1,\ldots,n_N \in \Z$,
\begin{align}
\MoveEqLeft
\left\| \left( \sum_{k = 1}^N \left\| \int_\R h_s(t) T_s^{n_k} f_k(t) ds \right\|_{\Lambda^\beta}^2 \right)^{\frac12} \right\|_{L^p(Y)} \leq \left\| \left( \sum_{k = 1}^N \left( \int_\R \|h_s(t) T_s^{n_k} f_k(t) \|_{\Lambda^\beta} ds \right)^2 \right)^{\frac12} \right\|_{L^p(Y)} \nonumber \\
& \leq \sup_{s \in \R } \|h_s\|_{M(\Lambda^\beta)} \left\| \left( \sum_{k = 1}^N \left( \int_\R \|T_s^{n_k} f_k(t)\|_{\Lambda^\beta} ds \right)^{2} \right)^{\frac12} \right\|_{L^p(Y)} \nonumber \\
& \leq \sup_{s \in \R } \|h_s\|_{M(\Lambda^\beta)} \int_\R \left\| \left( \sum_{k = 1}^N \|T_s^{n_k} f_k(t)\|_{\Lambda^\beta}^2 \right)^{\frac12} \right\|_{L^p(Y)} ds \nonumber \\
& \lesssim \sup_{s \in \R } \|h_s\|_{M(\Lambda^\beta)} \int_\R R \left( \left\{ T_s^n : \: n \in \Z \right\}_{L^p(Y(\Lambda^\beta)) \to L^p(Y(\Lambda^\beta))} \right) ds \left\| \left( \sum_{k = 1}^N \|f_k(t)\|_{\Lambda^\beta}^2 \right)^{\frac12} \right\|_{L^p(Y)}.
\label{equ-3-proof-prop-main-ft}
\end{align}

Here, $M(\Lambda^\beta)$ stands for the Banach algebra of pointwise multipliers of $\Lambda^\beta$.
According to \cite[p.~140]{Tri}, we have for the classical Sobolev space, $M(W^\beta_2(\R)) \supseteq C^{\beta'}$ for $\beta' > \beta$.
Note that since $\psi$ has compact support in $\R_+$, similarly to Remark \ref{rem-Lambda-dilation-invariant}, we have $\|h_{s,e}\|_{C^{\beta'}} \cong \|h_s\|_{C^{\beta'}}$, so that $\|h_s\|_{M(\Lambda^\beta)} \lesssim \|h_s\|_{C^{\beta'}}$.
Let us estimate $\|h_s\|_{C^{\beta'}} = \sup_{\alpha \leq \beta'} \|\partial^\alpha h_s\|_{\infty}$.
We have with $\xi(t) = t \psi\left( \frac{1}{t} \right) \in C^\infty_c(\R_+)$,
\begin{align}
\MoveEqLeft
\|h_s\|_{L^\infty(\R)} = \left\|\partial^{\beta'}\xi(t) \hat{m}\left(\frac{s}{t} \right) \right\|_{L^\infty_t(\R)} \langle s \rangle^{\gamma + \delta} \lesssim \sup_{t \in \left[\frac18,8\right]} \left|\hat{m}\left(\frac{s}{t}\right)\right| \langle s \rangle^{\gamma + \delta} \label{equ-5-proof-prop-main-ft}\\
& \leq \sup_{s \in \R} \sup_{t \in \left[\frac18,8\right]} \left| \hat{m}(s) \right| \langle s t \rangle^{\gamma + \delta} \nonumber \\
& \lesssim \left\|\hat{m}(s) \langle s \rangle^{\gamma + \delta} \right\|_{L^\infty_s(\R)} \lesssim \|m\|_{W^{\gamma + \delta}_1(\R)} .\nonumber
\end{align}
Moreover
\begin{align*}
\MoveEqLeft
\left\|\xi(t) \partial^{\beta'} \hat{m}\left(\frac{s}{t} \right) \right\|_{L^\infty_t(\R)} \langle s \rangle^{\gamma + \delta} \lesssim \left\| \partial^{\beta'}\left( \hat{m} \right) (s) s^{\beta'} \langle s \rangle^{\gamma + \delta} \right\|_{L^\infty_s(\R)} \\
& \lesssim \left\| x^{\beta'} m(x) \right\|_{W^{\gamma + \delta + \beta'}_1(\R)} \lesssim \|m\|_{W^{\gamma + \delta + \beta'}_1(\R)},
\end{align*}
where the last step follows from the fact that $m$ has compact support in $\R_+$, similarly to Remark \ref{rem-Lambda-dilation-invariant}.
Thus, 
\[ \sup_{s \in \R} \|h_s\|_{M(\Lambda^\beta)} \lesssim \sup_{s \in \R} \|h_s\|_{C^{\beta'}} \lesssim \|m\|_{W^{\gamma + \delta + \beta'}_1(\R) } < \infty . \]
To obtain the bound in \eqref{equ-3-proof-prop-main-ft}, it suffices to note that according to Lemma \ref{lem-UMD-lattice-Hilbert-extension}, 
\[ \int_{\R}  R \left( \left\{ T_s^n : \: n \in \Z \right\}_{L^p(Y(\Lambda^\beta)) \to L^p(Y(\Lambda^\beta))} \right) ds = \int_{\R}  R \left( \left\{ T_s^n : \: n \in \Z \right\}_{L^p(Y) \to L^p(Y)} \right) ds < \infty . \]
This shows with the calculation \eqref{equ-3-proof-prop-main-ft} that the set in \eqref{equ-2-proof-prop-main-ft} is $R$-bounded.
With the square function estimate \eqref{equ-1-proof-prop-main-ft}, we conclude the bound in \eqref{equ-4-proof-prop-main-ft}, and thus the proof of the first case.

2. Do as in the proof of Theorem \ref{thm-q-variation}, exploiting that $\|t \mapsto f_{2^n t}\|_{L^p(Y(\Lambda^\beta))} = \|t \mapsto f_t\|_{L^p(Y(\Lambda^\beta))}$.
\end{proof}

In the case that $m(0) \neq 0$, we have the following variant of Theorem \ref{thm-main-ft} in which we obtain an estimate of the pointwise supremum of $t \mapsto m(tA)f_t$.
Here the norm $\norm{m}_{\Hor^\alpha_1}$ is defined in a similar manner to $\norm{m}_{\Hor^\alpha_2}$ in Definition \ref{defi-Hoermander-class}, namely,
$\norm{m}_{\Hor^\alpha_1} = |m(0)| + \sup_{R > 0} \| \phi m(R \,\cdot) \|_{W^\alpha_1(\R)}$ for any $C^\infty_c(0,\infty)$ function $\phi$ different from the constant $0$ function.
Also, $\cutoff_0$ is as in Theorem \ref{thm-q-variation}, namely, $\cutoff_0 \in C^\infty(\R_+)$ with support in $(0,4]$ and equal to $1$ on $(0,2]$.

\begin{prop}
\label{prop-ft-exp}
Let $Y$ be a UMD lattice, $1 < p < \infty$ and $(\Omega,\mu)$ a $\sigma$-finite measure space.
Let $\beta > \frac12$.
Let $A$ be a $0$-sectorial operator on $L^p(Y)$.
Assume that $A$ has a $\Hor^\alpha_2$ calculus on $L^p(Y)$ for some $\alpha > \frac12$.
In this Proposition, we assume that $A$ is of the form $A = A_0 \otimes \Id_Y$ and that $\exp(-tA_0)$ is lattice positive and contractive on $L^p(\Omega)$.
Let $m$ be a spectral multiplier such that 
\[ m|_{[0,1]} \in C^1[0,1], \: \|m' \cutoff_0 \|_{\Hor^{c-1}_1} < \infty ,\: \sum_{n \geq 0} \|m(2^n \cdot) \dyad_0\|_{W^c_1(\R)} < \infty , \]
where
\[ c > \alpha + \max\left(\frac12,\frac{1}{\type L^p(Y)} - \frac{1}{\cotype L^p(Y)}\right) + 1 + \beta. \]
Then for $f_t \in L^p(Y(\Lambda^\beta_{2,2}(\R_+)))$, we have
\begin{align}
\label{equ-1-prop-ft-exp}
& \| t \mapsto m(tA)f_t\|_{L^p(Y(L^\infty(\R_+)))} \leq \\
& C \left( |m(0)| + \|m' \cutoff_0\|_{\Hor^{c-1}_1} + \sum_{n \geq 0} \|m(2^n \cdot) \dyad_0\|_{W^{c}_1(\R)} \right) \| t \mapsto f_t\|_{L^p(Y(\Lambda^\beta_{2,2}(\R_+)))}. \nonumber
\end{align}
\end{prop}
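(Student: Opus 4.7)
The plan is to reduce the estimate to Theorem \ref{thm-main-ft} combined with a Hopf--Dunford--Schwartz type maximal ergodic bound for the semigroup, by splitting
\[ m(\lambda) = m_1(\lambda) + m(0)\exp(-\lambda), \qquad m_1(\lambda) := m(\lambda) - m(0)\exp(-\lambda), \]
so that $m_1(0) = 0$. The two pieces $m_1(tA)f_t$ and $m(0)\exp(-tA)f_t$ will then be estimated by entirely different machinery.

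For the $m_1$ piece, the first task is to verify $\sum_{n \in \Z}\|m_1(2^n\cdot)\dyad_0\|_{W^c_1(\R)} < \infty$, which places $m_1$ inside case 2 of Theorem \ref{thm-main-ft}. For $n \geq 0$, the bound follows directly from the standing hypothesis on $m$ together with the exponential decay of $\exp(-2^n\cdot)\dyad_0$ in every $W^c_1(\R)$-norm. For $n \leq 0$ it is the same routine computation as in case 3 of the proof of Theorem \ref{thm-q-variation}: the identity $m_1(\lambda) = \int_0^\lambda (m'(s) + m(0)\exp(-s))\,ds$ on $[0,1]$, together with the regularity $m|_{[0,1]} \in C^1[0,1]$ and $\|m'\cutoff_0\|_{\Hor^{c-1}_1} < \infty$, delivers the needed $W^c_1$-summability at small scales. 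Theorem \ref{thm-main-ft} then bounds $\|t \mapsto m_1(tA)f_t\|_{L^p(Y(\Lambda^\beta))}$ by a constant multiple of $\|t \mapsto f_t\|_{L^p(Y(\Lambda^\beta))}$, and the Sobolev embedding $\Lambda^\beta \hookrightarrow C_0(\R_+) \hookrightarrow L^\infty(\R_+)$, valid for $\beta > \frac12$, lifts this pointwise in $(\omega,\omega')$ to the required $L^p(Y(L^\infty))$-bound.

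For the semigroup piece $m(0)\exp(-tA)f_t$, the assumption that $\exp(-tA_0)$ is lattice positive and contractive on $L^p(\Omega)$ makes $\exp(-tA) = \exp(-tA_0)\otimes \Id_Y$ a positive contraction on the Banach lattice $L^p(Y)$. The modulus inequality $|\exp(-tA)f_t| \leq \exp(-tA)|f_t|$, combined with the pointwise bound $|f_t| \leq \sup_{s > 0}|f_s|$ and the positivity of $\exp(-tA)$, gives
\[ \sup_{t > 0}|\exp(-tA)f_t| \leq \sup_{t > 0}\exp(-tA)\bigl(\sup_{s > 0}|f_s|\bigr) . \]
Taking $L^p(Y)$-norms, the Hopf--Dunford--Schwartz type maximal ergodic inequality for positive contractive semigroups on the UMD lattice valued $L^p$ (implicit in the $V^q$ estimate \cite[(1.3) arXiv version]{HoMa} since $V^q \hookrightarrow L^\infty$) controls the right-hand side by a constant times $\|\sup_{s > 0}|f_s|\|_{L^p(Y)} = \|t \mapsto f_t\|_{L^p(Y(L^\infty))}$, which in turn is bounded by $\|t \mapsto f_t\|_{L^p(Y(\Lambda^\beta))}$ via the same embedding $\Lambda^\beta \hookrightarrow L^\infty$. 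Assembling the $m_1$-contribution and the semigroup contribution (weighted by $|m(0)|$) yields \eqref{equ-1-prop-ft-exp}.

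The main obstacle is really only the $n \leq 0$ portion of the Sobolev summability check for $m_1$, which is the same bookkeeping already carried out in case 3 of Theorem \ref{thm-q-variation}; everything else is a citation of Theorem \ref{thm-main-ft}, of the maximal ergodic bound for positive contractive semigroups extended to the UMD lattice valued setting, or of the Sobolev embedding $\Lambda^\beta \hookrightarrow L^\infty$.
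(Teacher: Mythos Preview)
Your proposal is correct and follows essentially the same route as the paper: split $m = m_1 + m(0)e^{-\cdot}$, handle $m_1$ via Theorem \ref{thm-main-ft} case 2 after checking $\sum_{n\in\Z}\|m_1(2^n\cdot)\dyad_0\|_{W^c_1}<\infty$ exactly as in case 3 of Theorem \ref{thm-q-variation} (with integration exponent $1$ in place of $2$), and handle the semigroup piece via positivity plus the maximal inequality from \cite{HoMa}, then embed $\Lambda^\beta\hookrightarrow L^\infty$. The paper's own proof is terser but identical in substance.
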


\begin{proof}
The proof is a variant of the proof of case 3 in Theorem \ref{thm-q-variation}.
We put $m_1(t) = m(t) - m(0) \exp(-t)$.
In particular, we note that 
\[ \left\|\sup_{t > 0} \left| e^{-tA} f_t\right| \, \right\|_{L^p(Y)} \leq \left\| \sup_{t > 0}e^{-tA} \sup_{s > 0} |f_s| \,\right\|_{L^p(Y)} \lesssim \left\| \sup_{s > 0} |f_s| \, \right\|_{L^p(Y)} \overset{\text{Lemma }\ref{lem-q-variation}}{\lesssim} \left\| f \right\|_{L^p(Y(\Lambda^\beta))} . \]
The rest of the proof is identical to that of case 3 in Theorem \ref{thm-q-variation}, replacing the integration exponent $2$ by $1$ everywhere.
\end{proof}

The main result of this section then reads as follows.

\begin{cor}
\label{cor-wave}
Let $Y$ be a UMD lattice, $1 < p < \infty$ and $(\Omega,\mu)$ a $\sigma$-finite measure space.
Let $A$ be a $0$-sectorial operator on $L^p(Y)$.
Assume that $A$ has a $\Hor^\alpha_2$ calculus on $L^p(Y)$ for some $\alpha > \frac12$.
In this corollary, we assume that $A$ is of the form $A = A_0 \otimes \Id_Y$ and that $\exp(-tA_0)$ is lattice positive and contractive on $L^p(\Omega)$.
Let
\[ \delta > \alpha + \max\left(\frac12, \frac{1}{\type L^p(Y)} - \frac{1}{\cotype L^p(Y)} \right) + \frac32 . \]
Let $\psi_0 \in C^\infty_c(\R_+)$, i.e. with support included in a compact interval $\subseteq (0,\infty)$.
Then, with implied constants depending on $\psi_0$ and its support, we have
\[
\| \sup_{t > 0} |\psi_0(t) \exp(itA) f| \, \|_{L^p(Y)} \lesssim \norm{(1+A)^\delta f}_{L^p(Y)} \quad (f \in D(A^\delta)).
\]
\end{cor}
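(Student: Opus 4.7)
The plan is to deduce the corollary from Proposition \ref{prop-ft-exp} by writing $\psi_0(t) \exp(itA) f = m(tA) f_t$ for suitable $m$ and $f_t$. Concretely, set $g := (1+A)^\delta f \in L^p(Y)$ and take
\[
m(\lambda) := e^{i\lambda}(1+\lambda)^{-\delta}, \qquad f_t := \psi_0(t) \left( \frac{1+tA}{1+A} \right)^\delta g,
\]
so that $m(tA) f_t = \psi_0(t) \exp(itA)(1+tA)^{-\delta} \cdot (1+tA)^\delta (1+A)^{-\delta} g = \psi_0(t) \exp(itA) f$. The corollary will then follow from \eqref{equ-1-prop-ft-exp} once we verify (i) the hypotheses on $m$ in Proposition \ref{prop-ft-exp}, and (ii) the bound $\|t \mapsto f_t\|_{L^p(Y(\Lambda^\beta_{2,2}(\R_+)))} \lesssim \|g\|_{L^p(Y)}$ for some $\beta > \tfrac12$.

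For (i), clearly $m|_{[0,1]} \in C^1[0,1]$, and $m'\cutoff_0$ has compact support so $\|m'\cutoff_0\|_{\Hor^{c-1}_1} < \infty$ is immediate. The only delicate point is the summability $\sum_{n \geq 0} \|m(2^n \cdot)\dyad_0\|_{W^c_1(\R)} < \infty$. For $\lambda \in [\tfrac12, 2]$ and $k \in \N$, iterated differentiation gives $|(\partial^k m)(2^n \lambda)| \lesssim (1 + 2^n\lambda)^{-\delta}$, because derivatives of $e^{i\mu}$ are bounded and derivatives of $(1+\mu)^{-\delta}$ only decay faster. Hence $|\partial_\lambda^k[m(2^n \cdot)](\lambda)| = 2^{nk}|(\partial^k m)(2^n\lambda)| \lesssim 2^{n(k-\delta)}$, so by interpolation $\|m(2^n \cdot)\dyad_0\|_{W^c_1(\R)} \lesssim 2^{n(c-\delta)}$, which is summable iff $c < \delta$. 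Since we may choose $c > \alpha + \max(\tfrac12, \tfrac{1}{\type L^p(Y)} - \tfrac{1}{\cotype L^p(Y)}) + 1 + \beta$ and $\beta > \tfrac12$ both as close to equality as we like, the hypothesis $\delta > \alpha + \max(\tfrac12, \cdots) + \tfrac32$ yields admissible $c$ and $\beta$.

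For (ii), let $\supp(\psi_0) \subseteq [a,b] \subset (0,\infty)$. The symbol $s_t(\lambda) := ((1+t\lambda)/(1+\lambda))^\delta$ is holomorphic on a sector around $\R_+$ and uniformly bounded for $t \in [a,b]$, hence lies uniformly in a bounded set of $\Hor^\alpha_2$. Moreover $t \mapsto s_t$ is $C^\infty$ on $[a,b]$ with each $\partial_t^k s_t$ again uniformly in $\Hor^\alpha_2$ (the $\lambda$-factors introduced by differentiation in $t$ are absorbed by the $(1+t\lambda)^{\delta-k}(1+\lambda)^{-\delta}$ factor). Since $f_t = \psi_0(t) s_t(A) g$ is compactly supported in $t \in [a,b]$, Remark \ref{rem-Lambda-dilation-invariant} gives $\|t \mapsto f_t(x,\omega)\|_{\Lambda^\beta} \cong \|t \mapsto f_t(x,\omega)\|_{W^\beta_2(\R)}$, which is controlled by $\sum_{k \leq \lceil\beta\rceil + 1} \|\partial_t^k(\psi_0(t) s_t(A) g(x,\omega))\|_{L^2_t([a,b])}$. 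Taking $L^p(Y)$ norms and using the uniform $\Hor^\alpha_2$ bound on $\partial_t^k s_t(A)$ yields (ii).

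Plugging (i) and (ii) into Proposition \ref{prop-ft-exp} gives
\[
\|\sup_{t>0} |\psi_0(t) \exp(itA) f|\|_{L^p(Y)} = \|\sup_{t>0} |m(tA) f_t|\|_{L^p(Y(L^\infty))} \lesssim \|t \mapsto f_t\|_{L^p(Y(\Lambda^\beta))} \lesssim \|(1+A)^\delta f\|_{L^p(Y)}.
\]
The main obstacle is part (ii): one has to justify carefully that $\partial_t^k$ commutes with the Hörmander calculus of $A$ applied to $g$, and that the pointwise (in $(x,\omega)$) Sobolev-in-$t$ bound can be integrated against the lattice norm of $Y$ and the $L^p$ norm of $\Omega$, using the Fubini-type identification $L^p(Y)(\Lambda^\beta) = L^p(Y(\Lambda^\beta))$ that is available because $Y$ is UMD and $\Lambda^\beta$ is Hilbertian.
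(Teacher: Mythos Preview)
Your approach is the same as the paper's: you apply Proposition \ref{prop-ft-exp} with the same spectral multiplier $m(\lambda) = e^{i\lambda}(1+\lambda)^{-\delta}$ and the same $f_t = \psi_0(t)(1+tA)^\delta f$ (in your notation $\psi_0(t) s_t(A) g$). Part (i) is fine; the paper simply cites \cite[Corollary 3.11]{DK} for the same summability conclusion.

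The gap is in part (ii). The sentence ``Taking $L^p(Y)$ norms and using the uniform $\Hor^\alpha_2$ bound on $\partial_t^k s_t(A)$ yields (ii)'' does not follow: a uniform bound $\|\partial_t^k s_t(A) g\|_{L^p(Y)} \lesssim \|g\|_{L^p(Y)}$ for each fixed $t$ does \emph{not} control $\bigl\|\, \|t \mapsto \partial_t^k s_t(A) g\|_{L^2_t[a,b]} \,\bigr\|_{L^p(Y)}$, because the $L^2_t$ norm sits \emph{inside} the $L^p(Y)$ norm and these cannot be swapped in general. The Fubini-type identification $L^p(Y)(\Lambda^\beta) = L^p(Y(\Lambda^\beta))$ you invoke is only associativity of mixed-norm spaces; it does not permit interchanging $L^2_t$ with $L^p(Y)$.

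What is missing is a pointwise-in-$(x,\omega)$ domination of the $t$-dependence by a $t$-free integrable quantity. The paper first bounds $\|\cdot\|_{L^2_t([a,b])} \lesssim \|\cdot\|_{L^\infty_t([a,b])}$ (legitimate since $[a,b]$ is bounded) and then handles $\sup_{t \in [a,b]} |A^k(1+tA)^{\delta-k} f|$ via the Cauchy integral formula: since $z \mapsto \phi_z(\lambda) := \lambda^k(1+\lambda)^{-\delta}(1+z\lambda)^{\delta-k}$ is analytic from a complex neighbourhood of $\supp(\psi_0)$ into $\HI(\Sigma_\theta)$, one writes $\phi_t(A) f_1 = \frac{1}{2\pi i}\int_\Gamma (z-t)^{-1} \phi_z(A) f_1 \, dz$, dominates the sup over $t$ pointwise by $C\int_\Gamma |\phi_z(A) f_1|\, |dz|$, and only then takes $L^p(Y)$ norms and pulls them inside the $\Gamma$-integral by Minkowski. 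An equivalent elementary fix is the fundamental theorem of calculus: pointwise $|\partial_t^k s_t(A) g| \leq |\partial_t^k s_a(A) g| + \int_a^b |\partial_t^{k+1} s_\tau(A) g|\, d\tau$, so that $\bigl\|\sup_t |\partial_t^k s_t(A) g|\bigr\|_{L^p(Y)} \leq \|\partial_t^k s_a(A) g\|_{L^p(Y)} + \int_a^b \|\partial_t^{k+1} s_\tau(A) g\|_{L^p(Y)}\, d\tau \lesssim \|g\|_{L^p(Y)}$, now using your uniform $\Hor^\alpha_2$ bounds legitimately at fixed $\tau$.
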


\begin{proof}
We shall apply Proposition \ref{prop-ft-exp} to the spectral multiplier $m(\lambda) = (1 + \lambda)^{-\delta} \exp(i\lambda)$ and the element $f_t = \psi_0(t) (1 + tA)^{\delta} f$ for some $f \in D(A^\delta) \subseteq L^p(Y)$.
Let $\beta > \frac12$ such that $\delta > \alpha + \max\left(\frac12, \frac{1}{\type L^p(Y)} - \frac{1}{\cotype L^p(Y)} \right) + 1 + \beta$.
Note that according to \cite[Corollary 3.11]{DK}, $m$ satisfies the hypotheses of Proposition \ref{prop-ft-exp} for our choice of $\beta$ and $\delta$, since in general $\|m'\cutoff_0\|_{\Hor^{c-1}_1} \lesssim \|m'\cutoff_0\|_{\Hor^{c-1}_2}$ and $\|m(2^n\cdot)\dyad_0\|_{W^c_1(\R)} \lesssim_{\dyad_0} \|m(2^n \cdot) \dyad_0\|_{W^c_2(\R)}$.
Then we obtain
\begin{align*}
\MoveEqLeft
\left\| \sup_{t > 0} |\psi_0(t) \exp(itA)f| \, \right\|_{L^p(Y)} = \left\| \sup_{t > 0} \left| (1 + tA)^{-\delta} \exp(itA) \left( 1 + t A)^{\delta} \psi_0(t) f \right) \right| \, \right\|_{L^p(Y)} \\
& = \left\| \sup_{t > 0} \left| m(tA)f_t \right| \, \right\|_{L^p(Y)} \overset{\mathrm{Proposition} \, \ref{prop-ft-exp}}{\lesssim} \|f_t \|_{L^p(Y(\Lambda^\beta))} \\
& = \left\| t \mapsto \psi_0(t) (1 + tA)^{\delta} f \right\|_{L^p(Y(\Lambda^\beta))}.
\end{align*}
We estimate this further.
Since $\psi_0$ has compact support in $\R_+$, by Remark \ref{rem-Lambda-dilation-invariant}, we have
\begin{align*}
\MoveEqLeft
\left\| t \mapsto \psi_0(t) (1 + tA)^{\delta} f \right\|_{L^p(Y(\Lambda^\beta))} \lesssim  \left\| t \mapsto \psi_0(t) (1 + tA)^{\delta} f \right\|_{L^p(Y(W^{\lceil \beta \rceil}_2(\R)))} \\
& \lesssim \left\| t \mapsto \psi_0(t) (1 + tA)^{\delta} f\right\|_{L^p(Y(L^2(\R)))} + \sum_{k=1}^{\lceil \beta \rceil} \left\| t \mapsto \frac{d^{k}}{dt^{k}} \left( \psi_0(t) ( 1 + tA)^{\delta} f \right) \right\|_{L^p(Y(L^2(\R)))} \\
& \lesssim_{\psi_0} \left\| \sup_{t \in \supp(\psi_0)} \left|(1 + tA)^\delta f \right| \, \right\|_{L^p(Y)} + \max_{k = 0,\ldots,\lceil \beta \rceil}\left\| \sup_{t \in \supp(\psi_0)} \left| A^k (1 + tA)^{\delta- k} f \right| \, \right\|_{L^p(Y)}.
\end{align*}
We fix a $k \in \{ 0, 1, \ldots, \lceil \beta \rceil \}$.
Then 
\[ A^k  ( 1 + z A)^{\delta - k} f = A^k ( 1+ A)^{-\delta} (1 + z A)^{\delta - k } (1 + A)^\delta f = \phi_z(A) f_1\]
with $\phi_z(\lambda) = \lambda^k(1 + \lambda)^{-\delta} ( 1 + z \lambda)^{\delta - k}$
and $f_1 = (1 + A)^\delta f \in L^p(Y)$ since $f \in D(A^\delta)$.
Then $\phi_z$ is holomorphic and bounded on $\C_+$, thus $\phi_z(A)$ is a bounded operator thanks to the H\"ormander calculus, thus $\HI$ calculus of $A$.
Moreover, the function $z \mapsto \phi_z(A)f_1$ is analytic in some neighborhood of $\supp(\psi_0) \subseteq (0,\infty)$.
Thus, by the Cauchy integral formula,
\[ \phi_t(A) f_1 = \frac{1}{2\pi i} \int_\Gamma (z-t)^{-1} \phi_z(A)f_1 dz , \]
where $\Gamma$ is some simple contour surrounding $\supp(\psi_0)$.
We obtain 
\[\sup_{t \in \supp(\psi_0)} | A^k (1 + tA)^{\delta - k}f | \leq \frac{1}{2\pi} \left(\sup_{t \in \supp(\psi_0),z \in \Gamma} | (z -t)^{-1}| \right)  \int_\Gamma  |\phi_z(A)f_1| \: |dz| \]
and thus,
\begin{align*}
\MoveEqLeft
\norm{\sup_{t \in \supp(\psi_0)} | A^k ( 1 + t A)^{\delta - k}f | \: }_{L^p(Y)} \lesssim \int_\Gamma \norm{\phi_z(A)f_1}_{L^p(Y)} |dz| \\
& \lesssim \norm{f_1}_{L^p(Y)} = \norm{(1 + A)^\delta f}_{L^p(Y)} < \infty,
\end{align*}
where $\norm{\phi_z(A)f_1} \lesssim \norm{f_1}$ thanks to bounded $\HI$ calculus.
\end{proof}

\section{Pointwise continuity of time paths for abstract Schr\"odinger and wave equations}
\label{subsec-Carleson}

The goal of this section is to provide an application of Corollary \ref{cor-wave} to the continuity in time $t \in \R \mapsto u(t,x,\omega) = \exp(itA)f(x,\omega)$ of the solution of the abstract Schr\"odinger equation
\[ \begin{cases}
- i \frac{\partial}{\partial t} u(t,x,\omega) & = A_xu(t,x,\omega) \\
u(0,x,\omega) & =  f(x,\omega). \end{cases} \]
This requires naturally the inital data $f$ to be sufficiently smooth, i.e. belonging to a fractional domain space $D(A^\delta)$ of $A$.
Note that when $A = -\Delta$ on $L^2(\R^d)$ (classical Schr\"odinger equation), then Carleson \cite[Theorem p.24]{Car} asked for the optimal parameter $\delta$ such that $f \in H^\delta(\R^d) = D(A^{\delta/2})$ implies that $u(t,x) \to f(x)$ pointwise a.e. $x \in \R^d$ as $t \to 0+$ (which then yields by isometry of $\exp(itA)$ on $H^\delta(\R^d)$ that $u(t,x) \to u(t_0,x)$ pointwise a.e. $x \in \R^d$ as $t \to t_0$).
Carleson himself found that in the one-dimensional case, $\delta > \frac14$ is sufficient and no $\delta < \frac18$ is sufficient.
Du-Guth-Li \cite{DGL} recently found that in the two-dimensional case $\delta > \frac13$ is sufficient, and Du-Zhang \cite{DuZh} that in the $d$-dimensional case, $\delta > \frac{d}{2(d+1)}$ is sufficient.
That no $\delta < \frac{d}{2(d+1)}$ is admissible is in turn a recent result by Bourgain \cite{Bou16}.
In this section we provide an abstract version how to deduce from H\"ormander calculus of a $0$-sectorial operator such pointwise continuity of Schr\"odinger (or wave) equation paths.
Lebesgue exponent is as before in the range $1 < p < \infty$.
The price for our generality is that we need a higher value of $\delta$ than for the classical free Schr\"odinger equation.
However, our approach allows general measure spaces $\Omega$ in place of $\R^d$, more general operators $A$ and works also for $L^p(Y)$ valued initial values, lying in a domain of a fractional power of $A$.
The analogous question of pointwise continuity of the solution for the free wave equation, $u(t,x)= \exp(it \sqrt{-\Delta})f(x)$ seems to be more difficult and less results appear in the literature than for the Schr\"odinger equation. 
See e.g. \cite{RV2}.
Our approach works also for the wave equation, see e.g. Corollary \ref{cor-Zhang} with $a = 1$.

In the first main result, Theorem \ref{thm-continuous-paths}, we obtain continuity on $(0,\infty)$, i.e. outside of $0$.
Then we will enhance the result to continuity on $\R$ in Corollary \ref{cor-continuity-on-the-real-line}.
At the end we compare our result to others before in the literature and thus return to the case of $A$ being selfadjoint on some $L^2$ space, where the parameter $\delta$ simplifies and our space $D(A^\delta)$ often becomes again a Sobolev $H^s(\R^d)$ space.

\begin{thm}
\label{thm-continuous-paths}
Let $Y$ be a UMD lattice, $1 < p < \infty$ and $(\Omega,\mu)$ a $\sigma$-finite measure space.
Let $A = A_0 \ot \Id_Y$ have a $\Hor^\alpha_2$ calculus on $L^p(Y)$ for some $\alpha > \frac12$, and $A_0$ generate a lattice positive and contractive semigroup on $L^p(\Omega)$.
Pick
\[ \delta > \alpha + \max \left( \frac12, \frac{1}{\type L^p(Y)} - \frac{1}{\cotype L^p(Y)} \right) + \frac{3}{2} . \]
Then for any $f \in D(A^\delta)$, for a.e. $(x,\omega) \in \Omega \times \Omega'$, the function
\[ \begin{cases} (0,\infty) & \longmapsto \C \\ t & \longmapsto \exp(itA)f(x,\omega) \end{cases} \]
is continuous.
Similarly, for a.e. $x \in\Omega$, the function
\[ \begin{cases} (0,\infty) & \longmapsto Y \\ t & \longmapsto \exp(itA)f(x,\cdot) \end{cases} \]
is continuous.
\end{thm}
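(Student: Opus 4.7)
The plan is to deduce Theorem \ref{thm-continuous-paths} from the maximal bound of Corollary \ref{cor-wave} by a standard density argument in the spirit of Banach's principle. First, for any compact subinterval $[a,b] \subset (0,\infty)$ pick $\psi_0 \in C^\infty_c(\R_+)$ equal to $1$ on $[a,b]$; Corollary \ref{cor-wave} then yields the quantitative input
\[
\bigl\| \sup_{t \in [a,b]} |\exp(itA)f| \,\bigr\|_{L^p(Y)} \leq C_{[a,b]} \,\|(1+A)^\delta f\|_{L^p(Y)} \qquad (f \in D(A^\delta)),
\]
which will be used both to produce the exceptional null set and to transfer continuity from a dense subspace to all of $D(A^\delta)$.

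The heart of the proof is to exhibit a dense subspace $\mathcal{D} \subset D(A^\delta)$ on which pointwise-in-$(x,\omega)$ continuity of $t \mapsto \exp(itA) f(x,\omega)$ can be verified by hand. The natural choice is $\mathcal{D} = \{ \phi(A) g : \phi \in C^\infty_c(\R_+),\, g \in D_A \}$, with $D_A$ the calculus core of $A$. Elements of $\mathcal{D}$ lie in $\bigcap_{k \in \N} D(A^k) \subset D(A^\delta)$, and $\mathcal{D}$ is dense in $D(A^\delta)$ by a standard H\"ormander-calculus cutoff argument. For $f = \phi(A) g \in \mathcal{D}$ one has $\exp(itA) f = m_t(A) g$ with $m_t(\lambda) = e^{it\lambda} \phi(\lambda) \in C^\infty_c(\R_+)$, and the representation formula of Lemma \ref{lem-representation-formula-wave-operators} gives
\[
\exp(itA) f = \frac{1}{2\pi} \int_\R \hat{\phi}(s-t) \exp(isA) g \, ds
\]
as a Bochner integral in $L^p(Y)$. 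Since $g \in D_A$, each $\exp(isA) g$ is well defined on $L^p(Y)$ for every $s \in \R$; pointwise in $(x,\omega)$, the integrand is locally bounded in $s$ thanks to the maximal bound applied to $g$, and the rapid decay of the Schwartz function $\hat{\phi}$ produces an integrable majorant uniformly for $t$ in any compact set. Dominated convergence then delivers the desired pointwise a.e.\ continuity of $t \mapsto \exp(itA) f(x,\omega)$ on $(0,\infty)$.

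With continuity secured on $\mathcal{D}$, the transfer to general $f \in D(A^\delta)$ is routine. Pick $f_n \in \mathcal{D}$ with $(1+A)^\delta f_n \to (1+A)^\delta f$ in $L^p(Y)$; applying the maximal bound to $f - f_n$ and passing to a subsequence gives $\sup_{t \in [a,b]} |\exp(itA)(f - f_n)(x,\omega)| \to 0$ a.e.\ $(x,\omega)$, so $\exp(itA) f_n(x,\omega) \to \exp(itA) f(x,\omega)$ uniformly in $t \in [a,b]$ off a null set. A uniform limit of continuous functions being continuous, $t \mapsto \exp(itA) f(x,\omega)$ is continuous on $[a,b]$ a.e.\ $(x,\omega)$, and exhausting $(0,\infty)$ by a countable family of compact subintervals yields continuity on all of $(0,\infty)$. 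For the $Y$-valued statement, Fubini applied to the scalar null set already provides, for a.e.\ $x$, pointwise continuity in $t$ for a.e.\ $\omega$; combined with the $Y$-majorant $\sup_{t \in [a,b]} |\exp(itA) f(x,\cdot)| \in Y$ coming from the maximal estimate, the order continuity of the reflexive lattice $Y$ upgrades this to $Y$-norm continuity of $t \mapsto \exp(itA) f(x,\cdot)$ on $(0,\infty)$ for a.e.\ $x$.

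The main obstacle is the dense-subspace continuity: $L^p(Y)$-norm continuity of $t \mapsto \exp(itA) f$ on $\mathcal{D}$ is immediate from the continuous dependence of $m_t$ on $t$ in $\Hor^\alpha_2$ together with the bounded H\"ormander calculus of $A$, but norm continuity does not by itself yield the required pointwise continuity. It is precisely the integral representation of Lemma \ref{lem-representation-formula-wave-operators}, together with the maximal bound for $g$, that allows one to upgrade norm continuity to genuine pointwise continuity on $\mathcal{D}$; Banach's principle then carries the conclusion to every $f \in D(A^\delta)$.
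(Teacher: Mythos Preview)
Your overall architecture is exactly that of the paper: fix a compact $[a,b]\subset(0,\infty)$, use the maximal inequality of Corollary~\ref{cor-wave}, establish pointwise-in-$(x,\omega)$ continuity of $t\mapsto\exp(itA)f(x,\omega)$ on the calculus core $D_A$, and then pass to all of $D(A^\delta)$ by density. Your dense subspace $\mathcal D$ equals $D_A$, your density step is Lemma~\ref{lem-density-calculus-kernel-in-H}, and your transfer step is the same $\limsup$/subsequence argument. Your handling of the $Y$-valued conclusion via Fubini plus order continuity is a nice touch and slightly more explicit than what the paper writes.

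The one genuine gap is in the dense-subspace continuity step. You invoke Lemma~\ref{lem-representation-formula-wave-operators} to write $\exp(itA)f=\frac{1}{2\pi}\int_\R \hat\phi(s-t)\exp(isA)g\,ds$ and then assert that ``the integrand is locally bounded in $s$ thanks to the maximal bound applied to $g$, and the rapid decay of $\hat\phi$ produces an integrable majorant''. Local boundedness is not enough here: a function locally bounded in $s$ can still grow faster than any polynomial, so the Schwartz decay of $\hat\phi$ does not by itself yield an integrable majorant. Moreover, Corollary~\ref{cor-wave} only controls $\sup_{s\in K}|\exp(isA)g|$ for compact $K\subset(0,\infty)$, with a constant depending on $K$ in an untracked way, and says nothing about $s\le 0$; so you cannot patch together a global pointwise bound from it. There is also a hidden measurability issue: to write the Bochner integral pointwise in $(x,\omega)$ you first need a jointly measurable choice of $(s,x,\omega)\mapsto\exp(isA)g(x,\omega)$.

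The gap is repairable. Since $g=\psi(A)h\in D_A$, the H\"ormander calculus gives $\|\exp(isA)g\|_{L^p(Y)}\lesssim(1+|s|)^{N}$ for some $N$; choosing $M>N+1$, the function $(1+|s|)^{-M}\|\exp(isA)g\|_{L^p(Y)}$ is integrable in $s$, so by Fubini the pointwise majorant $\int_\R(1+|s|)^{-M}|\exp(isA)g(x,\omega)|\,ds$ lies in $L^p(Y)$ and is finite a.e.\ $(x,\omega)$. Since $|\hat\phi(s-t)|\lesssim_{[a,b]}(1+|s|)^{-M}$ uniformly for $t\in[a,b]$, this furnishes the uniform integrable majorant, and dominated convergence then gives continuity of the \emph{right-hand side} in $t$; one finally defines the representative of $\exp(itA)f$ to be this continuous function (cf.\ Remark~\ref{rem-continuous-path-measurability}). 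By contrast, the paper avoids all of this by proving directly in Lemma~\ref{lem-calculus-kernel-holomorphic-extension} that for $g\in D_A$ the map $z\mapsto\exp(izA)g(x,\omega)$ is \emph{entire}, via a power-series argument and a truncation that simultaneously handles measurability; this is shorter and sidesteps the majorant issue entirely.
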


The proof is divised in several steps which we address in the following lemmas.
Recall the calculus kernel $D_A= \{ \phi(A)f :\: f \in L^p(Y),\: \phi \in C^\infty_c(\R_+) \}$ from Lemma \ref{lem-representation-formula-wave-operators}.
We fix in the following a function $\psi_0 \in C^\infty_c(0,\infty)$.

\begin{lemma}
\label{lem-density-calculus-kernel-in-H}
Assume that the assumptions of Theorem \ref{thm-continuous-paths} hold.
The calculus kernel $D_A$ is contained in $D(A^\delta)$ and is dense in it with respect to the norm $\norm{(1+A)^\delta f}_{L^p(Y)}$.
\end{lemma}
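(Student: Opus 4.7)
The plan is to use the dyadic partition of unity $(\varphi_n)_{n\in\Z}$ from Definition~\ref{def-dyad} to truncate in the spectral variable, exploiting that multiplying $\phi \in C^\infty_c(\R_+)$ by $(1+\lambda)^\delta$ keeps us inside $C^\infty_c(\R_+)$. Both the inclusion $D_A \subseteq D(A^\delta)$ and the density claim will follow from the $\Hor^\alpha_2$ calculus assumption and Lemma~\ref{lem-Hormander-convergence-lemma}.

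\textbf{Inclusion.} For any $\phi \in C^\infty_c(\R_+)$ and $f \in L^p(Y)$, I would observe that $\psi(\lambda) := (1+\lambda)^\delta \phi(\lambda)$ still has compact support in $\R_+$ and is $C^\infty$, so $\psi \in \Hor^\alpha_2$. By the $\Hor^\alpha_2$ calculus hypothesis, $\psi(A)$ is bounded on $L^p(Y)$. The product rule of the extended functional calculus then yields $\psi(A) = (1+A)^\delta \phi(A)$, hence $\phi(A)f \in D((1+A)^\delta) = D(A^\delta)$, with $\norm{(1+A)^\delta \phi(A)f}_{L^p(Y)} \lesssim \norm{f}_{L^p(Y)}$.

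\textbf{Density.} Fix $f \in D(A^\delta)$ and set $g := (1+A)^\delta f \in L^p(Y)$. Under our standing assumptions $A = A_0 \otimes \Id_Y$ with $A_0$ injective on $L^p(\Omega)$ (as is standard for operators of the type considered in the paper), $g$ lies in $\overline{R(A)} = L^p(Y)$. Define
\[ \phi_N := \sum_{|n| \leq N} \varphi_n \in C^\infty_c(\R_+) \quad\text{and}\quad f_N := \phi_N(A) f \in D_A. \]
Since the product $(1+\lambda)^\delta \phi_N(\lambda)$ also lies in $C^\infty_c(\R_+)$, the calculus product rule gives
\[ (1+A)^\delta f_N \;=\; \bigl((1+\cdot)^\delta \phi_N\bigr)(A)\, f \;=\; \phi_N(A)(1+A)^\delta f \;=\; \phi_N(A)\, g, \]
and therefore $\norm{(1+A)^\delta (f - f_N)}_{L^p(Y)} = \norm{g - \phi_N(A)g}_{L^p(Y)}$. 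By Lemma~\ref{lem-Hormander-convergence-lemma}, $g = \sum_{n \in \Z}\varphi_n(A)g$ in $L^p(Y)$, so $\phi_N(A)g \to g$ as $N \to \infty$. This produces the desired approximation $f_N \to f$ in the graph norm.

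\textbf{Main obstacle.} The delicate point is the commutation $(1+A)^\delta\phi_N(A) = \phi_N(A)(1+A)^\delta$ on $D(A^\delta)$, since $(1+\cdot)^\delta$ itself is not in $\Hor^\alpha_2$ and we cannot compose the two operators as bounded ones. However, the product $(1+\lambda)^\delta\phi_N(\lambda)$ is a single compactly supported symbol, yielding a bounded operator that factorises consistently through either ordering, which is exactly how the extended calculus accommodates products of an unbounded factor with a compactly supported one. Once this bookkeeping is granted, the rest is a routine application of the Paley--Littlewood convergence from Lemma~\ref{lem-Hormander-convergence-lemma}.
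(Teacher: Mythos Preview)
Your proof is correct and essentially identical to the paper's: both establish the inclusion by noting that $(1+\lambda)^\delta\phi(\lambda)\in C^\infty_c(\R_+)$ whenever $\phi\in C^\infty_c(\R_+)$, and both prove density by applying the dyadic truncations $\sum_{|k|\le N}\varphi_k(A)$ to $f\in D(A^\delta)$, commuting through $(1+A)^\delta$, and invoking Lemma~\ref{lem-Hormander-convergence-lemma}. One small caveat: your appeal to injectivity of $A_0$ is not actually among the hypotheses of Theorem~\ref{thm-continuous-paths}, but the paper's own proof applies Lemma~\ref{lem-Hormander-convergence-lemma} to $(1+A)^\delta g$ without checking membership in $\overline{R(A)}$ either, so you are no worse off.
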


\begin{proof}
Let $f \in D_A$ and $\phi \in C^\infty_c(\R_+)$ such that $f = \phi(A)f$.
Then $m(\lambda) = \phi(\lambda) (1 + \lambda)^\delta$ belongs to $C^\infty_c(\R_+)$, and thus, $m(A)f$ belongs to $L^p(Y)$.
But $m(A)f = (1+A)^\delta \phi(A)f= (1+A)^\delta f$, so $f \in D(A^\delta)$.
Moreover, according to the Convergence Lemma \ref{lem-Hormander-convergence-lemma}, if $g \in D(A^\delta)$ and $(\dyad_n)_{n \in \Z}$ is a dyadic partition of unity of $\R_+$,
$f_n = \sum_{k=-n}^n \dyad_k(A)g \in D_A$ and
\[ \norm{(1+A)^\delta(f_n-g)}_{L^p(Y)} = \norm{\left(\sum_{k=-n}^n \dyad_k(A) - \Id \right)(1+A)^\delta g}_{L^p(Y)} \to 0 \quad (n \to \infty) . \]
\end{proof}

\begin{lemma}
\label{lem-calculus-kernel-holomorphic-extension}
Assume that the assumptions of Theorem \ref{thm-continuous-paths} hold.
Let $f \in D_A$ belong to the calculus kernel.
Then for a.e. $(x,\omega) \in \Omega \times \Omega'$, the mapping $\R_+ \to \C, \: t \mapsto \exp(itA)f(x,\omega)$ extends to an entire holomorphic function on $\C$.
In particular, it is a continuous function.
Similarly, for a.e. $x \in \Omega$ the mapping $\R_+ \to Y,\: t \mapsto \exp(itA)f(x,\cdot)$ extends to an entire function, thus continuous on $\R_+$.
Finally, $\C \times \Omega \times \Omega' \to \C, \: (z,x,\omega) \mapsto \exp(izA)f(x,\omega)$ is a measurable function.
\end{lemma}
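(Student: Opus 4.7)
The plan is to realize $\exp(izA)f$ as an absolutely convergent power series in $z$ whose coefficients are the iterates $A^n f$, and then to transfer convergence from $L^p(Y)$-norm down to pointwise a.e.\ (respectively $Y$-norm a.e.) convergence using the $\sigma$-Fatou property of the lattices $L^p(Y)$ and $Y$ together with the order continuity that reflexive UMD lattices automatically enjoy.

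First I would fix a representation $f = \phi(A)g$ with $\phi \in C^\infty_c(\R_+)$, $\supp \phi \subseteq [a,b] \subset (0,\infty)$ and $g \in L^p(Y)$. A direct derivative count for the compactly supported symbol $\lambda^n \phi(\lambda)$ gives $\|\lambda^n \phi\|_{\Hor^\alpha_2} \lesssim n^{\lceil \alpha \rceil} b^n$, so the bounded $\Hor^\alpha_2$ calculus yields $\|A^n f\|_{L^p(Y)} = \|(\lambda^n \phi)(A)g\|_{L^p(Y)} \lesssim n^{\lceil \alpha \rceil} b^n \|g\|_{L^p(Y)}$. Hence $\sum_n \frac{(iz)^n}{n!} A^n f$ converges absolutely in $L^p(Y)$, uniformly for $z$ in compact subsets of $\C$. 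Since $\sum_{n=0}^N \frac{(iz\lambda)^n}{n!} \phi(\lambda) \to \exp(iz\lambda)\phi(\lambda)$ in $\Hor^\alpha_2$, by multiplicativity of the calculus the series sums to $(\exp(iz\cdot)\phi)(A)g = \exp(izA)f$, identifying $z \mapsto \exp(izA)f$ as an $L^p(Y)$-valued entire function.

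Next I would invoke the $\sigma$-Fatou property of $L^p(Y)$ (Lemma~\ref{lem-UMD-lattice-Fatou}) on the non-negative increasing sequence $S^R_N := \sum_{n=0}^N \frac{R^n}{n!}|A^n f|$, which is bounded in $L^p(Y)$ by the previous step. Since $L^p(Y)$ is reflexive, hence order continuous, $S^R_N$ converges in $L^p(Y)$-norm to some $S^R \in L^p(Y)$, so in particular $\sum_n \frac{R^n}{n!}|A^n f(x,\omega)| < \infty$ for a.e.\ $(x,\omega)$. Running $R$ through $\N$ and intersecting the countably many null sets yields a common full-measure set on which $F(z,x,\omega) := \sum_n \frac{(iz)^n}{n!} A^n f(x,\omega)$ is absolutely convergent for every $z \in \C$ and therefore entire in $z$. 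For each fixed $z$, an $L^p(Y)$-convergent series has an a.e.-convergent subsequence which must agree with $F(z,\cdot,\cdot)$, so $F(z,\cdot,\cdot) = \exp(izA)f$ in $L^p(Y)$; choosing $F$ as the representative settles the first entire-extension claim, and joint measurability of $F$ on $\C \times \Omega \times \Omega'$ is automatic since $F$ is a pointwise limit of polynomials in $z$ with $(x,\omega)$-measurable coefficients.

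For the $Y$-valued extension I would apply the $\sigma$-Fatou argument one level deeper: by the previous step $S^R(x) \in Y$ for a.e.\ $x$, and reflexivity of $Y$ together with the lattice $\sigma$-Fatou property gives norm convergence $\sum_{n=0}^N \frac{R^n}{n!}|(A^n f)(x,\cdot)| \to S^R(x)$ in $Y$. The lattice comparison $|\sum_n a_n y_n| \leq \sum_n |a_n||y_n|$ then upgrades this to absolute $Y$-norm convergence of $\sum_n \frac{(iz)^n}{n!}(A^n f)(x,\cdot)$ uniformly for $|z| \leq R$, which defines an entire $Y$-valued extension $z \mapsto F(z,x,\cdot)$. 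The main technical care throughout is the three-level lattice bookkeeping---pointwise in $(x,\omega)$ versus $Y$-norm versus $L^p(Y)$-norm---where at each layer the $\sigma$-Fatou property has to be combined with order continuity to upgrade order-bounded monotone convergence into actual norm convergence; once this is organised, the whole proof reduces to routine power-series estimates.
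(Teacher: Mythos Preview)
Your proof is correct and follows the same overall strategy as the paper: expand $\exp(izA)f$ as a power series in $z$ with coefficients in $L^p(Y)$, and then pass from norm convergence to pointwise (resp.\ $Y$-valued) convergence via lattice properties. The differences are purely tactical. You identify the Taylor coefficients explicitly as $\frac{i^n}{n!}A^n f$ and bound them through the $\Hor^\alpha_2$ calculus applied to $\lambda^n\phi(\lambda)$, whereas the paper treats the coefficients abstractly via the Cauchy estimate for the $B(L^p(Y))$-valued entire function $z\mapsto\exp(izA)\phi(A)$. More substantively, to pass from $L^p(Y)$ convergence to a.e.\ convergence you use the $\sigma$-Levi/$\sigma$-Fatou property on the monotone sequence $\sum_{n\leq N}\frac{R^n}{n!}|A^n f|$ and then order continuity of the reflexive lattice; the paper instead truncates each coefficient at level $NR^{-n}$, shows $\|\sup_{|z|\leq R-\epsi}|F_{N,R}(z)-F(z)|\|_{L^p(Y)}\to 0$ by $\sigma$-order continuity, and extracts an a.e.\ convergent subsequence. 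Your route is arguably the cleaner of the two, and it has the bonus of giving the $Y$-valued statement by a genuine second application of the same monotone argument rather than by analogy. One small point: when you write ``by multiplicativity of the calculus'' you really only need boundedness of the $\Hor^\alpha_2$ calculus (convergence of symbols in $\Hor^\alpha_2$ implies operator-norm convergence), so you might phrase it that way.
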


\begin{proof}
Let $\phi \in C^\infty_c(0,\infty)$ such that $f = \phi(A)f$.
The function $z \mapsto \exp(iz\lambda) \phi(\lambda)$ is holomorphic $\C \to \Hor^\alpha_2$, so the function $z \mapsto \exp(izA) \phi(A)$ is holomorphic $\C \to B(L^p(Y))$.
Thus there exists a sequence $(f_n)_n$ in $L^p(Y)$ such that 
\[ F(z) = \exp(izA) \phi(A)f = \sum_{n = 0}^\infty f_n z^n \quad (z \in \C) .\]
By the Cauchy integral formula, for any $R > 0$ there exists $C_R < \infty$ such that $\norm{f_n} \leq C_R R^{-n}$ for any $n \in \N_0$.
For $N \in \N$ and $R > 0$, we consider the truncation 
\[ F_{N,R}(z) = \sum_{n = 0}^\infty f_n 1_{|f_n| \leq N R^{-n}} z^n . \]
Then $F_{N,R}$ is holomorphic $\C \to L^p(Y)$, but also 
\[ z \mapsto F_{N,R}(z,x,\omega) = \sum_{n=0}^\infty f_n(x,\omega) 1_{|f_n(x,\omega)| \leq N R^{-n}} z^n \]
is holomorphic $B(0,R) \to \C$ for fixed $(x,\omega) \in \Omega \times \Omega'$.
For $n \in \N_0$ fixed, $R^n \norm{f_n 1_{|f_n| > N R^{-n}}}_{L^p(Y)} = \norm{R^n f_n 1_{|R^n f_n| > N}}_{L^p(Y)} \to 0$ as $N \to \infty$ thanks to the $\sigma$-order continuity of $L^p(Y)$.
Moreover $\norm{R^n f_n 1_{|R^n f_n| > N}}_{L^p(Y)} \leq R^n \norm{f_n}_{L^p(Y)} \leq C_R$ is bounded for $n,N \in \N_0$.
Then we deduce
\begin{align*}
\MoveEqLeft
\norm{\sup_{z \in B(0,R - \epsi)} |F_{N,R}(z) - F(z)| \: }_{L^p(Y)}  \leq \sum_{n =0}^\infty \norm{R^nf_n 1_{|f_n| > N R^{-n}}}_{L^p(Y)} \frac{(R-\epsi)^n}{R^n} \\
 & \longrightarrow  0 \quad (N \to \infty).
\end{align*}
According to a standard proof of completeness of $L^p$, convergence in $L^p(Y)$ implies pointwise a.e. convergence of a subsequence.
Thus, for a.e. $(x,\omega) \in \Omega \times \Omega'$, $F(z)$ is the uniform limit of $F_{N_k,R}$.
Since analyticity is preserved under uniform limits, we infer that $z \mapsto F(z,x,\omega)$ is indeed analytic $B(0,R - \epsi) \to \C$ for a.e. $(x,\omega)$.
Letting $R \to \infty$ then shows the claim on analyticity.
Since $F_{N,R}$ is clearly measurable as a function in $(z,x,\omega)$ as pointwise limit of easy functions, also the uniform (hence pointwise) limit $F$ is measurable as a function in $(z,x,\omega)$.
\end{proof}

\begin{proof}[of Theorem \ref{thm-continuous-paths}]
Let $f \in D(A^\delta)$ and choose $g \in D_A$ an approximation of $f$ as in Lemma \ref{lem-density-calculus-kernel-in-H}.
Assume that $\psi_0 \in C^\infty_c(0,\infty)$ satisfies $\psi_0(t) = 1$ for $t$ in a neighborhood of $[a,b]$, where $[a,b] \subseteq (0,\infty)$ is any fixed interval on which we want to prove continuity of paths.
We have
\begin{align}
\MoveEqLeft
\norm{ \limsup_{\epsi \to 0} \sup_{|t-t_0| < \epsi, \: t_0 \in [a,b]}|\exp(itA)f - \exp(it_0A)f| \: }_{L^p(Y)} \nonumber \\
& \leq \norm{ \limsup_{\epsi \to 0} \sup_{|t-t_0| < \epsi, \: t_0 \in [a,b]} |\exp(itA)(f-g)| \:}_{L^p(Y)} \nonumber \\
& + \norm{ \limsup_{\epsi \to 0} \sup_{|t-t_0| < \epsi ,\: t_0 \in [a,b]} |\exp(itA)g - \exp(it_0A)g| \:}_{L^p(Y)} \nonumber \\
& + \norm{ \sup_{t_0 \in [a,b]} |\exp(it_0A)(f-g)| \:}_{L^p(Y)}. \label{equ-1-proof-thm-continuous-paths}
\end{align}
For the first expression in \eqref{equ-1-proof-thm-continuous-paths}, we note that
\begin{align*}
\MoveEqLeft
\limsup_{\epsi \to 0} \sup_{|t-t_0| < \epsi, \: t_0 \in [a,b]} |\exp(itA)(f-g)(x,\omega)| \leq \sup_{t \in [a - \epsi,b+\epsi]} | \exp(itA)(f-g)(x,\omega) | \\
& \leq \sup_{t \in (0,\infty)} |\psi_0(t) \exp(itA)(f-g)(x,\omega)|.
\end{align*}
Therefore,
\begin{align*}
\MoveEqLeft
\norm{ \limsup_{\epsi \to 0} \sup_{|t-t_0| < \epsi, \: t_0 \in [a,b]} |\exp(itA)(f-g)|}_{L^p(Y)} \\
& \leq  \norm{\sup_{t \in (0,\infty)} | \psi_0(t) \exp(itA)(f-g)| \: }_{L^p(Y)} \\
& \overset{Corollary \ref{cor-wave}}{\lesssim} \norm{ (1+A)^\delta (f-g)}_{L^p(Y)}
\end{align*}
The same estimate applies for the third expression in \eqref{equ-1-proof-thm-continuous-paths} and shows that
\[ \norm{ \sup_{t_0 \in [a,b]} |\exp(it_0A)(f-g)| \: }_{L^p(Y)} \lesssim \norm{(1+A)^\delta (f-g)}_{L^p(Y)}.\]
Then for the second expression, Lemma \ref{lem-calculus-kernel-holomorphic-extension} shows that, in fact 
\[ \limsup_{\epsi \to 0} \sup_{|t-t_0| < \epsi, \: t_0 \in [a,b]} |\exp(itA)g(x,\omega)- \exp(it_0 A)g(x,\omega)| = 0\]
for a.e. $(x,\omega) \in \Omega \times \Omega'$.
Thus the second expression vanishes.
We obtain that
\[ \norm{ \limsup_{\epsi \to 0} \sup_{|t-t_0| < \epsi, \: t_0 \in [a,b]} |\exp(itA)f - \exp(it_0A)f| \:}_{L^p(Y)} \lesssim \norm{(1+A)^\delta(f-g)}_{L^p(Y)} .\]
By Lemma \ref{lem-density-calculus-kernel-in-H}, $\norm{(1+A)^\delta(f-g)}_{L^p(Y)}$ is finite for any $g \in D_A$ and by density becomes arbitrarily small.
We conclude that for a.e. $(x,\omega) \in \Omega \times \Omega'$,
\[ \limsup_{\epsi \to 0}\sup_{|t-t_0| < \epsi, \: t_0 \in [a,b]} |\exp(itA)f(x,\omega) - \exp(it_0A)f(x,\omega)| = 0.\]
In other words, for a.e. $(x,\omega) \in \Omega \times \Omega'$, $[a,b] \to \C,\: t \mapsto \exp(itA)f(x,\omega)$ is continuous.
Now let $a \to 0+$ and $b \to \infty$ to see that $\R_+ \to \C, \: t \mapsto \exp(itA)f(x,\omega)$ is continuous.

For the second mapping, it suffices to redo the same proof starting with 
\[ \norm{\limsup_{\epsi \to 0} \sup_{|t-t_0| < \epsi, \: t_0 \in [a,b]}\norm{\exp(itA)f(x) - \exp(it_0A)f(x)| }_Y\: }_{L^p(\Omega)} \]
on the l.h.s. of the estimate, and already in the first step to use that this is majorised by $\norm{ \limsup_{\epsi \to 0} \sup_{|t-t_0| < \epsi, \: t_0 \in [a,b]}|\exp(itA)f(x) - \exp(it_0A)f(x)| \: }_{L^p(Y)}$.
\end{proof}

\begin{remark}
\label{rem-continuous-path-measurability}
In the situation of Theorem \ref{thm-continuous-paths}, one might wonder whether for general $f \in D(A^\delta)$, $\Omega \times \Omega' \to C[a,b],\: (x,\omega) \mapsto (t \mapsto \exp(itA)f(x,\omega))$ is a priori mesurable, in particular whether the scalar function $(x,\omega) \mapsto \sup_{t \in [a,b]} |\exp(itA)f(x,\omega)|$ is measurable.

To this end, note first that if $f$ belongs to $D_A$, then according to Lemma \ref{lem-calculus-kernel-holomorphic-extension}, $(t,x,\omega) \mapsto \exp(itA)f(x,\omega)$ is measurable and also 
\[ t \mapsto \sup_{t \in [a,b]}|\exp(itA)f(x,\omega)| = \sup_{t \in [a,b],\:t \in \mathbb Q} |\exp(itA)f(x,\omega)| \]
is measurable, where equality follows from continuity in $t$.
So, $\exp(itA)f$ then belongs indeed to $L^p(Y(C[a,b]))$.
Then for general $f \in D(A^\delta)$, we let $f_n \in D_A$ approximate $f$ in $H$ according to Lemma \ref{lem-density-calculus-kernel-in-H}.
Then according to the proof of Theorem \ref{thm-continuous-paths} as it stands,
\[\norm{t \mapsto \exp(itA)(f_n - f_m) }_{L^p(Y(C[a,b]))} \lesssim \norm{f_n - f_m}_H, \]
so that $(\exp(itA)f_n)_n$ is a Cauchy sequence in $L^p(Y(C[a,b]))$, and possesses a limit $X \in L^p(Y(C[a,b]))$.
Moreover, for fixed $t \in [a,b]$, 
\[\exp(itA)f_n = \left[(1+A)^{-\delta}\exp(itA)\right] (1 + A)^\delta f_n \to \exp(itA)f, \]
since $(1+A)^{-\delta}\exp(itA)$ is a bounded operator thanks to H\"ormander calculus and $(1+A)^\delta f_n \to (1+A)^\delta f$ in $L^p(Y)$, as $f \in D(A^\delta)$.
By unicity of limit, this implies that for all $t > 0$ and a.e. $(x,\omega) \in \Omega \times \Omega'$: $\exp(itA)f(x,\omega) = X(t,x,\omega)$.
Then we choose for each fixed $t > 0$ the representative of $\exp(itA)f$ such that it coincides with $X(t,\cdot,\cdot)$.
Thus we conclude that we can choose for any $t > 0$ a representative $\Omega \times \Omega' \to \C$ of $\exp(itA)f$ in $L^p(Y)$ such that for a.e. $(x,\omega) \in \Omega \times \Omega'$, the function $(0,\infty) \to \C,\: t \mapsto \exp(itA)f(x,\omega)$ is continuous.
\end{remark}

With the price that the initial value $f$ lies in a fractional domain space of $A$ of a higher order, we can obtain a.e. continuous paths $t\mapsto \exp(itA)f(x,\omega)$ on the whole real line and in particular continuity in $0$ as in the classical Carleson problem.

\begin{cor}
\label{cor-continuity-on-the-real-line}
Let $Y$ be a UMD lattice, $1 < p < \infty$ and $(\Omega,\mu)$ a $\sigma$-finite measure space.
Let $A = A_0 \ot \Id_Y$ have a $\Hor^\alpha_2$ calculus on $L^p(Y)$ for some $\alpha > \frac12$, and $A_0$ generate a lattice positive and contractive semigroup on $L^p(\Omega)$.
Pick
\[ \delta > 2\alpha + \max \left( \frac12, \frac{1}{\type L^p(Y)} - \frac{1}{\cotype L^p(Y)} \right) + \frac{3}{2} . \]
Then for any $f \in D(A^\delta)$, for a.e. $(x,\omega) \in \Omega \times \Omega'$, the function
\[ \begin{cases} \R & \longmapsto \C \\ t & \longmapsto \exp(itA)f(x,\omega) \end{cases} \]
is continuous.
Similarly, for a.e. $x \in\Omega$, the function
\[ \begin{cases} \R & \longmapsto Y \\ t & \longmapsto \exp(itA)f(x,\cdot) \end{cases} \]
is continuous.
\end{cor}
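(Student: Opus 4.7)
Theorem \ref{thm-continuous-paths} applies under our strictly stronger hypothesis on $\delta$ and yields, for a.e.\ $(x,\omega) \in \Omega \times \Omega'$, continuity of $t \mapsto \exp(itA)f(x,\omega)$ on $(0,\infty)$. To obtain continuity on $(-\infty,0)$, I would rerun the proofs of Corollary \ref{cor-wave} and Theorem \ref{thm-continuous-paths} with the spectral multiplier $\lambda \mapsto \exp(-i\lambda)$ in place of $\lambda \mapsto \exp(i\lambda)$. The relevant norms there ($W^c_1(\R)$, $\Hor^{c-1}_1$, and the $\Lambda^\beta$-norms) depend only on moduli of pointwise derivatives and are invariant under complex conjugation of the multiplier, so every estimate carries over verbatim and delivers continuity of $t \mapsto \exp(-itA)f(x,\omega)$ on $(0,\infty)$, equivalently of $t \mapsto \exp(itA)f(x,\omega)$ on $(-\infty,0)$. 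Only continuity at $t = 0$ then remains to be treated.

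I would address $t = 0$ by the density/approximation scheme of the proof of Theorem \ref{thm-continuous-paths}. For $g \in D_A$, Lemma \ref{lem-calculus-kernel-holomorphic-extension} makes $t \mapsto \exp(itA)g(x,\omega)$ entire and hence continuous at $0$. By Lemma \ref{lem-density-calculus-kernel-in-H} we can pick $g_n \in D_A$ with $\|(1+A)^\delta(f-g_n)\|_{L^p(Y)} \to 0$, and the splitting
\[
\exp(itA)f - f = (\exp(itA) - \Id)(f - g_n) + (\exp(itA)g_n - g_n)
\]
reduces the desired pointwise convergence $\exp(itA)f(x,\omega) \to f(x,\omega)$ a.e.\ as $t \to 0$ to the uniform maximal estimate
\[
\left\| \sup_{|t|\leq 1} |(\exp(itA) - \Id)h| \: \right\|_{L^p(Y)} \lesssim \|(1+A)^\delta h\|_{L^p(Y)} \quad (h \in D(A^\delta)),
\]
by applying this estimate to $h = f - g_n$ and running the usual $L^p$-to-a.e. subsequence argument.

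To prove the maximal estimate I would use the fundamental theorem of calculus. For $h \in D(A^\delta)$ the functional calculus makes $t \mapsto \exp(itA)h = [(1+\cdot)^{-\delta}\exp(it\cdot)](A)\,(1+A)^\delta h$ differentiable from $\R$ into $L^p(Y)$, with derivative $iA\exp(itA)h$; hence
\[
\exp(itA)h - h = i\int_0^t A\exp(isA)h \, ds = i\int_0^t \tilde h_s(A)(1+A)^\delta h \, ds,
\]
with $\tilde h_s(\lambda) = \lambda(1+\lambda)^{-\delta}\exp(is\lambda)$. A direct H\"ormander-norm computation — using the modulation bound $\|g\exp(iN\cdot)\|_{W^\alpha_2} \lesssim (1+|N|)^\alpha \|g\|_{W^\alpha_2}$ and the envelope bound $\|\phi(\lambda) R\lambda(1+R\lambda)^{-\delta}\|_{W^\alpha_2} \lesssim \min(R, R^{1-\delta})$ for $\phi \in C^\infty_c(\R_+)$ and $R > 0$ — then yields $\sup_{|s|\leq 1} \|\tilde h_s\|_{\Hor^\alpha_2} < \infty$ under our hypothesis on $\delta$. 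The $\Hor^\alpha_2$ calculus of $A$ therefore gives $\|A\exp(isA)h\|_{L^p(Y)} \lesssim \|(1+A)^\delta h\|_{L^p(Y)}$ uniformly in $|s|\leq 1$, and taking pointwise lattice moduli inside the integral and applying Minkowski's inequality delivers the maximal estimate. The $Y$-valued second conclusion of the corollary is obtained by repeating the same argument with $\|\cdot\|_Y$ in place of $|\cdot|$ throughout, exactly as in the proof of Theorem \ref{thm-continuous-paths}.

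The main technical obstacle is that $\exp(itA)$ is not a priori a $C_0$-group on $L^p(Y)$ — it is only defined through the functional calculus after premultiplication by a smoothing factor such as $(1+A)^{-\delta}$ — so both the differentiability of $t \mapsto \exp(itA)h$ on $D(A^\delta)$ and the factorisation $A\exp(isA) = \tilde h_s(A)(1+A)^\delta$ must be justified from within the H\"ormander calculus of Section \ref{subsec-abstract-Hormander} rather than from semigroup theory. The accompanying uniform H\"ormander-norm bound on $\tilde h_s$ for $|s|\leq 1$, while classical in flavour, is the quantitative core of the argument and is precisely where the hypothesis on $\delta$ in the corollary is used.
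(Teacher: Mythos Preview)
Your argument is correct but takes a genuinely different route from the paper. The paper's proof is a single-stroke time-shift: for fixed $t_0 > 0$ one sets $g := \exp(-it_0A)f$ and checks, via the factorisation $g = (1+A)^{-(\alpha+\epsi)}\exp(-it_0A)\,(1+A)^{\alpha+\epsi}f$ and the fact that $\lambda \mapsto (1+\lambda)^{-(\alpha+\epsi)}\exp(-it_0\lambda)$ lies in $\Hor^\alpha_2$, that $g \in D(A^{\delta'})$ with $\delta' = \delta - \alpha - \epsi$. Theorem \ref{thm-continuous-paths} then applies to $g$, giving continuity of $t \mapsto \exp(itA)g = \exp(i(t-t_0)A)f$ on $(0,\infty)$, i.e.\ of $t \mapsto \exp(itA)f$ on $(-t_0,\infty)$; letting $t_0 \to \infty$ finishes. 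The extra $\alpha$ in the hypothesis on $\delta$ is thus exactly the cost of the shift operator $\exp(-it_0A)$ on the fractional domain scale.

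Your three-piece decomposition (positive times, negative times, $t=0$) works, but note that your Step 3 actually requires only $\delta > \alpha + 1$ from the $\Hor^\alpha_2$ bound on $\tilde h_s$, and Steps 1--2 only need Theorem \ref{thm-continuous-paths}'s weaker threshold $\delta > \alpha + \max(\ldots) + \frac32$. So your claim that Step 3 ``is precisely where the hypothesis on $\delta$ in the corollary is used'' is off: your proof never uses the extra $\alpha$ at all and would in fact establish the result under the weaker hypothesis of Theorem \ref{thm-continuous-paths}. The paper's approach is shorter and explains the stated threshold; yours is more hands-on and incidentally shows the threshold can be lowered, at the price of the technical justifications you flag (pointwise versus $L^p$ validity of the integral identity, differentiability of $t \mapsto \exp(itA)h$ through the calculus).
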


\begin{proof}
Fix some $t_0 > 0$ and $\epsi > 0$ such that $\delta > \epsi + 2\alpha + \max \left( \frac12, \frac{1}{\type L^p(Y)} - \frac{1}{\cotype L^p(Y)} \right) + \frac{3}{2}$.
Put $\delta' = \delta - \alpha - \epsi$ which then satisfies the assumption on $\delta$ in Theorem \ref{thm-continuous-paths}.
Let 
\[ g := \exp(-it_0 A)f = (1+A)^{-(\alpha + \epsi)} \exp(-it_0A) (1+A)^{\alpha + \epsi}f. \]
As $\lambda \mapsto (1+\lambda)^{-(\alpha + \epsi)} \exp(-it_0\lambda)$ belongs to $\Hor^\alpha_2$ \cite[Lemma 3.9 (2)]{KrW3} and $(1+A)^{\alpha + \epsi}f$ belongs to $D(A^{\delta'})$, $g$ belongs to $D(A^{\delta'})$ too.
Then we infer by Theorem \ref{thm-continuous-paths} that for a.e. $(x,\omega)$, $t \mapsto \exp(itA)g(x,\omega)$ is continous $(0,\infty) \to \C$ (resp. for a.e. $x$, $t \mapsto \exp(itA)g(x,\cdot)$ is continuous $(0,\infty) \to Y$).
However, by functional calculus, $\exp(itA)g = \exp(i(t-t_0)A)f$, so that $t \mapsto \exp(itA)f(x,\omega)$ is continuous $(-t_0,\infty) \to \C$ (resp. $t \mapsto \exp(itA)f(x,\cdot)$ is continuous $(-t_0,\infty) \to Y$) a.e.
We conclude by letting $t_0 \to \infty$.
\end{proof}

In case that $L^p(\Omega,Y) = L^2(\Omega,\C)$ and $A$ self-adjoint positive, Corollary \ref{cor-continuity-on-the-real-line} comes with (smaller) better exponent $\delta = 2 +\epsi$.
At this point, we remark that e.g. for the case of the free Schr\"odinger propagator and $f \in H^{\frac{d}{2} + \epsilon}(\R^d)$, the pointwise convergence $\exp(it\Delta)f(x) \to f(x)$ as $t \to 0+$ follows from a simple application of Fourier-Plancherel and the H\"older inequality.
Since in Proposition \ref{prop-continuity-on-the-real-line-selfadjoint} below, $\delta = 2 + \epsi$ does not depend on the dimension, this result does not seem to be reducible to such easy argument.

\begin{prop}
\label{prop-continuity-on-the-real-line-selfadjoint}
Let $(\Omega,\mu)$ be a $\sigma$-finite measure space.
Let $A$ be a self-adjoint positive operator on $L^2(\Omega)$ such that $e^{-tA}$ is positivity preserving for all $t \geq 0$ (more generally, such that $|e^{-tA}f| \leq S_t|f|$ where $S_t : L^2(\Omega) \to L^2(\Omega)$ is a positive contraction semigroup).
Let $\delta > 2$.
Then for any $f \in D(A^\delta)$, for a.e. $x \in \Omega$, the function
\[ \begin{cases} \R & \longmapsto \C \\ t & \longmapsto \exp(itA)f(x) \end{cases} \]
is continuous.
\end{prop}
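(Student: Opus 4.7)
A direct application of Corollary \ref{cor-continuity-on-the-real-line} with $p=2$, $Y=\C$ and $\alpha$ just above $\frac12$ yields only $\delta > 3$, so to obtain the sharper threshold $\delta > 2$ the plan is to bypass the H\"ormander-calculus machinery and exploit instead the Hilbert-space structure of $L^2(\Omega)$ together with the one-dimensional Sobolev embedding $W^{1,2}(I) \hookrightarrow C(I)$ on compact intervals $I \subseteq \R$.

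First I would reduce to continuity on each compact interval $[-n,n] \subseteq \R$ and conclude by taking a countable union over $n$. Since $\delta > 2 > 1$, the element $f$ lies in $D(A)$, so Stone's theorem gives that the path $u : \R \to L^2(\Omega),\: u(t) := \exp(itA)f$, is of class $C^1$ with $u'(t) = iA\exp(itA)f$ and constant norms $\|u(t)\|_{L^2} = \|f\|_{L^2}$, $\|u'(t)\|_{L^2} = \|Af\|_{L^2}$. In particular
\[ \int_{-n}^{n} \bigl(\|u(t)\|_{L^2}^2 + \|u'(t)\|_{L^2}^2\bigr) \,dt = 2n \bigl(\|f\|_{L^2}^2 + \|Af\|_{L^2}^2\bigr) < \infty. \]

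Second, I would select jointly measurable versions $U, V : \R \times \Omega \to \C$ of $u(t)(x)$ and $u'(t)(x)$, which exist by the standard lifting of strongly measurable Bochner paths. The $L^2$-valued identity $u(t)-u(s) = \int_s^t u'(r)\,dr$ combined with Fubini gives, for each fixed pair $(s,t)$ and almost every $x$, the pointwise identity $U(t,x) - U(s,x) = \int_s^t V(r,x)\,dr$. Restricting $(s,t)$ to the countable set $\mathbb{Q}^2 \cap [-n,n]^2$ yields a single null set outside of which this identity holds simultaneously for all rational endpoints.

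Third, Tonelli applied to the displayed bound shows that for almost every $x$ the function $r \mapsto V(r,x)$ belongs to $L^2([-n,n]) \hookrightarrow L^1([-n,n])$. For such $x$, the primitive $\tilde U(t,x) := U(-n,x) + \int_{-n}^t V(r,x)\,dr$ is absolutely continuous in $t$ on $[-n,n]$ and agrees with $U(t,x)$ at every rational $t$. Combining the density of $\mathbb{Q}$ in $[-n,n]$ with the $L^2$-continuity of $u$ shows that $\tilde U(t,\cdot)$ is a valid representative of $\exp(itA)f$ for every $t$, which furnishes the sought continuous-in-$t$ representative for almost every $x$.

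The main technical obstacle I anticipate is the bookkeeping in the second step: the exceptional null set must be arranged so as not to depend on $t$, so that one genuinely obtains continuity of $t \mapsto \exp(itA)f(x)$ for a.e.\ $x$, rather than merely pointwise continuity for each fixed $t$ along a rational sequence. The joint measurability of $U$ and $V$ together with the countable density argument in time is precisely what accomplishes this, and it is also the step where the domination hypothesis $|e^{-tA}f| \leq S_t|f|$ would enter if one preferred to invoke instead the abstract framework of the previous sections.
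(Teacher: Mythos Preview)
Your argument is correct, and it is genuinely different from the paper's. The paper proves Proposition~\ref{prop-continuity-on-the-real-line-selfadjoint} by rerunning the H\"ormander-calculus machinery of Theorem~\ref{thm-main-ft}, Corollary~\ref{cor-wave}, Theorem~\ref{thm-continuous-paths} and Corollary~\ref{cor-continuity-on-the-real-line} in the special case $L^p(Y) = L^2(\Omega)$: self-adjointness gives a $\Hor^\alpha_2$ calculus for any $\alpha > \tfrac12$, the Hilbert-space setting makes the type/cotype correction vanish (Proposition~\ref{prop-Hormander-calculus-to-R-Hormander-calculus}), and unitarity of $\exp(itA)$ removes the extra $\alpha$ from the translation step in Corollary~\ref{cor-continuity-on-the-real-line}, yielding $\delta > \tfrac12 + \tfrac32 = 2$. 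Your route instead bypasses all of this: Stone's theorem plus Fubini shows that for $f \in D(A)$ the Bochner-$C^1$ path $t \mapsto \exp(itA)f$ has, for a.e.\ $x$, an absolutely continuous scalar representative. This is more elementary, does not use the domination hypothesis $|e^{-tA}f| \leq S_t|f|$ at all, and actually proves the stronger statement that $f \in D(A)$ (so $\delta \geq 1$) already suffices. The paper's approach, by contrast, has the advantage of being a direct specialisation of the general $L^p(\Omega,Y)$ theory, which is the point the authors wish to illustrate; your argument is specific to $L^2$ and does not extend to that setting.
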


\begin{proof}
The proof goes along the same lines as that of Theorem \ref{thm-main-ft}, Corollary \ref{cor-wave}, Theorem \ref{thm-continuous-paths} and Corollary \ref
{cor-continuity-on-the-real-line}.
This time, we pick $\alpha > \frac12$, since the selfadjoint calculus of $A$ implies $\Hor^\alpha_2$ calculus for such $\alpha$ (note that $\Hor^\alpha_2$ embeds into the space of bounded Borel functions on $[0,\infty)$).
Then the additional summand $\max \left( \frac12, \frac{1}{\type L^p(Y)} - \frac{1}{\cotype L^p(Y)} \right)$ in the choice of $\delta$ is not necessary, since $L^2(\Omega)$ is a Hilbert space, and thus, bounded $\Hor^\alpha_2$ calculus is equivalent to $R$-bounded $\Hor^\alpha_2$ calculus (see Proposition \ref{prop-Hormander-calculus-to-R-Hormander-calculus} and the proof of Theorem \ref{thm-main-ft}).
Also the additional summand $\alpha$ in Corollary \ref{cor-continuity-on-the-real-line} is not necessary, since $\exp(itA)$ is bounded on $L^2(\Omega)$ (without a smoothing power of resolvent of $A$).
We conclude that we can choose $\delta > \alpha + 0 + \frac32 = \frac12 + \frac32 = 2$.
\end{proof}

In the following let us compare Proposition \ref{prop-continuity-on-the-real-line-selfadjoint} with other results in the literature on continuity of $t \mapsto \exp(itA)f(x)$.
First note that in case $A = - \Delta$, the optimal parameter $s$ in order that this continuity in $t =  0$ holds for all $f \in H^s(\R^d)$ is Carleson's problem \cite[p.~24]{Car}, with solution $s > \frac{d}{2(d+1)}$.
A generalization to fractional powers $(-\Delta)^{a/2}$ of $-\Delta$ was obtained in \cite[Theorem 1.2]{Zhang}. See also \cite{ChoKo,MYZ} for related results in dimension two, and \cite{Sj2,SjS} for higher dimensions but sequential convergence $\exp(it_k(-\Delta)^{a/2})f(x) \to f(x)$ for sequences $(t_k)_k$ converging rapidly to $0+$.

\begin{prop}\cite[Theorem 1.2]{Zhang}
Let $a > 0$, $d \in \N$ and $s > d\min(1,a)/4$.
Let $A = (-\Delta)^{a/2}$.
Then $\exp(itA)f(x) \to f(x)$ as $t\to 0$ for a.e. $x \in \R^d$ if $f \in H^s(\R^d)$.
\end{prop}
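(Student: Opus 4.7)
The plan is to reduce the pointwise a.e.\ convergence to a maximal estimate and then establish the latter by standard oscillatory-integral technology. Specifically, I would first prove the maximal inequality
\[ \Bnorm{\sup_{0 < t < 1}|\exp(itA)f|}_{L^2(B(0,R))} \leq C_R \|f\|_{H^s(\R^d)} \quad (R > 0, \: s > d\min(1,a)/4) , \]
and then deduce a.e.\ convergence by the classical Stein--Sawyer density argument: for $f \in \S(\R^d)$ the convergence $\exp(itA)f(x) \to f(x)$ holds uniformly (from Fourier-side dominated convergence), the set of good $f \in H^s$ is closed in $H^s$ thanks to the maximal inequality via sublevel-set estimates on the $\limsup$ function, and $\S(\R^d)$ is $H^s$-dense.

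For the maximal estimate itself, I would apply the Kolmogorov--Seliverstov linearisation: pick an arbitrary measurable selector $t(\cdot) : B(0,R) \to (0,1)$ of near-maximisers and bound the linearised operator
\[ U_t f(x) = \int_{\R^d} e^{2\pi i (x\cdot \xi + t(x)|\xi|^a)} \hat f(\xi)\,d\xi \]
from $H^s(\R^d)$ to $L^2(B(0,R))$ uniformly in $t(\cdot)$. After a Littlewood--Paley decomposition $f = \sum_k f_k$ with $\supp(\hat f_k) \subseteq \{|\xi| \sim 2^k\}$ and $\|f\|_{H^s}^2 \cong \sum_k 2^{2ks}\|f_k\|_2^2$, it suffices to prove the frequency-localised bound $\|U_t f_k\|_{L^2(B(0,R))} \leq C_R 2^{kd\min(1,a)/4}\|f_k\|_{L^2}$, which by $TT^*$ is equivalent to
\[ \sup_{x \in B(0,R)} \int_{B(0,R)}|K_k(x,y)|\,dy \lesssim 2^{kd\min(1,a)/2}, \quad K_k(x,y) := \int_{|\xi| \sim 2^k} e^{2\pi i((x-y)\cdot \xi + (t(x)-t(y))|\xi|^a)}\,d\xi . \]

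The kernel $K_k$ would be analysed by stationary phase in $\xi$, distinguishing the regime where $|x - y|$ dominates $a\cdot 2^{k(a-1)}|t(x) - t(y)|$ (non-stationary, rapid decay via integration by parts) from the regime where it does not (stationary, yielding the dispersive bound $|K_k| \lesssim 2^{kd}(1+2^{ka}|t(x)-t(y)|)^{-d/2}$). Integrating these pointwise bounds over $y \in B(0,R)$ and optimising between the two regimes produces the claimed estimate, with the exponent $\min(1,a)$ reflecting the dichotomy: for $a \geq 1$ the dispersion dominates and the stationary contribution gives the Sj\"olin--Vega threshold $s > d/4$, while for $0 < a < 1$ the sub-dispersive phase only yields $s > da/4$.

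The main obstacle is the stationary-phase analysis of $K_k$ and the careful optimisation of the $\int|K_k|\,dy$ estimate across both regimes, in particular the boundary case $|x-y| \sim 2^{k(a-1)}|t(x)-t(y)|$ and the range $a < 1$ where the phase is sub-linear at high frequency and the standard stationary-phase inequalities need adaptation. I emphasise that this $TT^*$ plus stationary-phase scheme produces only Zhang's exponent $s > d\min(1,a)/4$; the sharp bound $s > \frac{d}{2(d+1)}$ for $a = 2$ due to Bourgain and Du--Guth--Li--Zhang requires considerably deeper restriction and decoupling tools beyond this approach.
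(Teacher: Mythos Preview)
This proposition is not proved in the paper at all: it is quoted verbatim from Zhang's article \cite[Theorem 1.2]{Zhang} as a point of comparison, and the paper supplies no argument for it. The authors' own contribution in this passage is the \emph{next} statement, Corollary~\ref{cor-Zhang}, which gives the alternative threshold $s > 2a$ via their abstract Proposition~\ref{prop-continuity-on-the-real-line-selfadjoint} and improves on Zhang's bound in high dimensions. So there is no ``paper's own proof'' to compare your proposal against.

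That said, your sketch is a reasonable outline of the standard route to Zhang's result: reduction to a local $L^2$ maximal estimate, linearisation of the supremum, Littlewood--Paley localisation, and a $TT^*$ kernel analysis via stationary phase, with the $\min(1,a)$ exponent emerging from the dichotomy between the dispersive ($a \geq 1$) and sub-dispersive ($0 < a < 1$) regimes. This is essentially the Sj\"olin--Vega framework adapted to fractional powers, and is in the spirit of what \cite{Zhang} does. The paper's method, by contrast, is purely abstract functional-calculus machinery (H\"ormander calculus plus positivity of the semigroup) and never touches oscillatory integrals or stationary phase; it yields a dimension-free but generally weaker exponent.
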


An application of Proposition \ref{prop-continuity-on-the-real-line-selfadjoint} yields the following improvement in case of high dimension (e.g. $d > 8$ if $a \in (0,1)$).

\begin{cor}
\label{cor-Zhang}
Let $0 < a < 2$, $d \in \N$ and $s > 2a$.
Let $A = (-\Delta)^{a/2}$.
Then $\exp(itA)f(x) \to f(x)$ as $t \to 0$ for a.e. $x \in \R^d$ if $f \in H^s(\R^d)$.
\end{cor}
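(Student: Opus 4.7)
The plan is to deduce Corollary \ref{cor-Zhang} as a direct application of Proposition \ref{prop-continuity-on-the-real-line-selfadjoint} to the operator $A = (-\Delta)^{a/2}$ on the Hilbert space $L^2(\R^d)$. Three ingredients must be verified: self-adjoint positivity of $A$, lattice positivity (and contractivity) of the semigroup $e^{-tA}$, and an identification of the domain $D(A^\delta)$ with a classical Sobolev space so that the hypothesis $f \in H^s(\R^d)$ can be translated into the hypothesis of Proposition \ref{prop-continuity-on-the-real-line-selfadjoint}.

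First I would recall that $A = (-\Delta)^{a/2}$ is defined via the Borel functional calculus of the self-adjoint positive operator $-\Delta$, hence is itself self-adjoint and positive on $L^2(\R^d)$. For the semigroup, since $0 < a/2 < 1$, Bochner subordination yields
\[ \exp(-tA) f = \exp(-t(-\Delta)^{a/2}) f = \int_0^\infty \exp(s\Delta) f \, d\mu_t(s), \]
where $\mu_t$ is a probability measure on $(0,\infty)$ (the one-sided $a/2$-stable subordinator at time $t$). Since each heat operator $\exp(s\Delta)$ is positivity preserving and $L^2$-contractive, the integral is again positivity preserving and contractive on $L^2(\R^d)$; this is exactly the positivity/contractivity assumption of Proposition \ref{prop-continuity-on-the-real-line-selfadjoint}.

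Second, I would identify $D(A^\delta)$. By the spectral theorem,
\[ D(A^\delta) = \bigl\{ f \in L^2(\R^d) : \: (1+|\xi|^a)^\delta \hat{f}(\xi) \in L^2(\R^d) \bigr\}, \]
and since $(1+|\xi|^a)^\delta \asymp (1+|\xi|^2)^{a\delta/2}$ uniformly in $\xi$, we have the continuous identification $D(A^\delta) = H^{a\delta}(\R^d)$. Given $s > 2a$, the hypothesis $\delta > 2$ of Proposition \ref{prop-continuity-on-the-real-line-selfadjoint} allows us to pick $\delta \in (2, s/a)$, which is non-empty precisely because $s/a > 2$. For such $\delta$, we have $a\delta < s$, hence $f \in H^s(\R^d) \hookrightarrow H^{a\delta}(\R^d) = D(A^\delta)$.

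Finally, applying Proposition \ref{prop-continuity-on-the-real-line-selfadjoint} with $\Omega = \R^d$ and this $A$ and $\delta$, we conclude that for a.e. $x \in \R^d$ the function $t \mapsto \exp(itA)f(x)$ is continuous on $\R$; in particular it is continuous at $t = 0$, giving $\exp(itA)f(x) \to f(x)$ a.e. as $t \to 0$. No step looks delicate: the only mild point is checking positivity/contractivity of the subordinated semigroup, which is standard, and verifying the Sobolev identification of $D(A^\delta)$, which is routine Fourier analysis.
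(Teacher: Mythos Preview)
Your proof is correct and follows essentially the same route as the paper's own proof: apply Proposition \ref{prop-continuity-on-the-real-line-selfadjoint} to $A=(-\Delta)^{a/2}$, verify self-adjointness and that $e^{-tA}$ is a positive contraction semigroup via Bochner subordination to the heat semigroup, and identify $D(A^\delta)=H^{a\delta}(\R^d)$ so that $s>2a$ corresponds to $\delta>2$. Your write-up is in fact somewhat more detailed than the paper's, which compresses all of this into three sentences.
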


\begin{proof}
Note that we can apply Proposition \ref{prop-continuity-on-the-real-line-selfadjoint}.
Indeed, $A$ generates a positive contraction semigroup on $L^2(\R^d)$ (as it is subordinated to the heat semigroup) and is self-adjoint.
Our proposition applies for $f \in D(A^\delta) = D((-\Delta)^{a\delta/2}) = H^{a \delta}(\R^d)$ with $\delta > 2$ so with $a \delta > 2 a$.
\end{proof}

In another direction, in \cite[Theorem 1.4]{LW}, a.e. convergence of $\exp(itA)f(x)$ is proved with $A = P(D)$, where $P : \R^d \to \R$ is a real valued function,
and $P(D)$ is the selfadjoint Fourier multiplier with symbol $P(\xi)$.

\begin{prop}\cite[Theorem 1.4]{LW}
Let $m,s_0 > 0$ and $d \in \N$.
Let $P : \R^d\to \R$ be continuous such that $|P(\xi)| \lesssim |\xi|^m$ as $|\xi| \to \infty$.
Assume that for each $s > s_0$ the following maximal inequality holds.
\[ \norm{ \sup_{0 < t < 1} \left| e^{it P(D)} f \right| }_{L^p(B(0,1))} \lesssim \norm{f}_{H^s(\R^d)}, \quad p \geq 1 . \]
Then for all $f \in H^{s + \delta}(\R^d),\: 0 \leq \delta < m$,
\[ e^{itP(D)}f(x) - f(x) = o(t^{\frac{\delta}{m}}) \]
a.e. $x \in \R^d$ as $t \to 0+$.
\end{prop}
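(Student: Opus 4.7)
The strategy will be to first establish a quantitative maximal inequality
\[ \Bnorm{\sup_{0 < t < 1} t^{-\delta/m} |e^{itP(D)}f - f|}_{L^p(B(0,1))} \lesssim \norm{f}_{H^{s+\delta}(\R^d)} \]
and then to combine it with a standard density/limsup argument. A subtle point is that the hypothesis provides the maximal bound for \emph{every} $s > s_0$; I will exploit this freedom by proving the above inequality using the assumed bound at an auxiliary smoothness level $s' \in (s_0, s)$ chosen strictly below $s$.

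I will set up a Littlewood--Paley decomposition $f = \sum_{j \geq 0} P_j f$ with $P_j$ frequency-localized to $|\xi| \sim 2^j$, and split the supremum in $t$ at the natural threshold $t_j := 2^{-jm}$ where $t|P(\xi)| \sim 1$ on $\supp \psi_j$. In the range $t \leq t_j$, the identity $(e^{itP(D)}-I)P_jf = \int_0^t iP(D) e^{isP(D)} P_j f \,ds$ together with $|P(\xi)| \lesssim 2^{jm}$ on $\supp \psi_j$ yields the pointwise estimate
\[ t^{-\delta/m}|(e^{itP(D)}-I)P_j f(x)| \lesssim t^{1-\delta/m} \cdot 2^{jm} \sup_{0 < s < 1}|e^{isP(D)}\widetilde{Q}_j f(x)| \leq 2^{j\delta} \sup_{s}|e^{isP(D)} \widetilde{Q}_j f(x)|, \]
where $\widetilde{Q}_j := 2^{-jm} P(D) P_j$ is a Fourier multiplier of size $\sim 1$ supported in $|\xi| \sim 2^j$. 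In the range $t > t_j$, the trivial estimate $|(e^{itP(D)}-I)P_jf| \leq |e^{itP(D)}P_jf| + |P_jf|$ together with $t^{-\delta/m} \leq 2^{j\delta}$ gives an analogous bound. Taking $L^p(B(0,1))$ norms and invoking the hypothesized maximal inequality at level $s'$ (together with a Sobolev embedding $H^{s'} \hookrightarrow L^p$ to absorb the pointwise $|P_jf|$ term), I obtain
\[ \Bnorm{\sup_{0 < t < 1} t^{-\delta/m}|(e^{itP(D)}-I)P_jf|}_{L^p(B(0,1))} \lesssim 2^{j\delta}\norm{P_jf}_{H^{s'}}. \]

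Summing in $j$, using the Fourier localization to rewrite $\norm{P_jf}_{H^{s'}} \cong 2^{j(s'-s-\delta)}\norm{P_jf}_{H^{s+\delta}}$, and applying Cauchy--Schwarz in $\ell^2_j$ yields
\[ \sum_j 2^{j\delta}\norm{P_jf}_{H^{s'}} = \sum_j 2^{j(s'-s)}\norm{P_jf}_{H^{s+\delta}} \leq \Big(\sum_j 2^{2j(s'-s)}\Big)^{1/2}\norm{f}_{H^{s+\delta}}, \]
whose geometric factor converges precisely because $s' < s$. For the density step, Schwartz functions are dense in $H^{s+\delta}$, and for $g \in \mathcal{S}(\R^d)$ the uniform estimate $|e^{itP(D)}g(x) - g(x)| \leq t \int_{\R^d}|P(\xi)||\hat{g}(\xi)|\,d\xi = O(t) = o(t^{\delta/m})$ (since $\delta < m$) is trivial. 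Writing $f = g + h$ with $\norm{h}_{H^{s+\delta}}$ arbitrarily small and applying the maximal inequality to $h$ forces $\limsup_{t \to 0+} t^{-\delta/m}|e^{itP(D)}f(x) - f(x)| = 0$ for a.e.\ $x \in B(0,1)$, and hence for a.e.\ $x \in \R^d$ by covering with translated balls.

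The main obstacle I anticipate is the summation over dyadic pieces: a naive triangle-inequality sum produces a bound only in the Besov norm $B^{s+\delta}_{2,1}$, strictly stronger than the target Sobolev norm $H^{s+\delta} = B^{s+\delta}_{2,2}$. Closing this gap is precisely what forces the use of the auxiliary exponent $s' < s$, since the resulting decay factor $2^{j(s'-s)}$ is exactly what Cauchy--Schwarz converts into an $H^{s+\delta}$ bound. Thus the hypothesis that the maximal inequality holds for the whole range $s > s_0$ (not merely for one fixed value) enters the argument in an essential way.
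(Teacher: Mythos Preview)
The paper does not give its own proof of this proposition; it is quoted verbatim from \cite[Theorem 1.4]{LW} purely for comparison, and the authors immediately pass to their own variant (the corollary that follows). So there is nothing to compare your argument against in the paper itself.

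Your argument is essentially correct and is the natural one. One small gap: the Sobolev embedding $H^{s'}(\R^d) \hookrightarrow L^p$ you invoke to control $\|P_j f\|_{L^p(B(0,1))}$ need not hold, since the hypotheses impose no relation between $s_0$, $d$, and $p$ (for large $p$ one would need $s' \geq d(\tfrac12 - \tfrac1p)$, which may exceed $s$). This is easily repaired: since $P_j f$ has compact Fourier support and $P$ is continuous, one has $e^{itP(D)}P_j f \to P_j f$ uniformly as $t \to 0^+$, hence $|P_j f(x)| \leq \sup_{0<t<1}|e^{itP(D)}P_j f(x)|$ pointwise, so the $|P_j f|$ term is already dominated by the maximal term and the hypothesized inequality at level $s'$ handles it directly. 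With this adjustment the proof goes through as written; your observation that the \emph{range} $s>s_0$ (not a single value) is what permits the choice $s'<s$ and hence the Cauchy--Schwarz upgrade from a Besov $B^{s+\delta}_{2,1}$ bound to the desired $H^{s+\delta}$ bound is exactly the point.
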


With somewhat different assumptions, we obtain pointwise convergence (without $o$ information) for such Fourier multipliers $A = P(D)$ in the following way.
Note that the lattice positivity assumption holds e.g. if $d = 1$ and $P$ is an even function such that its restriction to $[0,\infty)$ is concave.
For then, the Fourier transform of $e^{-tP(\xi)}$, which is the convolution kernel of $e^{-tP(D)}$, takes positive values.
The lattice positivity assumption also will hold if $P$ is negative definite, for then by Schoenberg's Theorem \cite{Schoe}, $e^{-tP(\xi)}$ is positive definite, so by Bochner's theorem \cite{Loo}, its Fourier transform is a positive measure. Since $\xi \mapsto |\xi|^a$ is negative definite for $0 < a \leq 2$, the corollary below applies e.g. if $P(\xi) = \sum_{k=1}^m \alpha_k |\xi|^{a_k}$ with $\alpha_k \geq 0$ and $a_k \in (0,2]$.

\begin{cor}
\label{cor-Li-Wang}
Let $P : \R^d \to \R_+$ be a measurable function taking positive values and $P(D)$ the associated positive self-adjoint Fourier multiplier.
Assume that $e^{-tP(D)}$ is lattice positive (more generally, $|e^{-tP(D)}f| \leq S_t |f|$ for some positive contraction semigroup $S_t : L^2(\R^d) \to L^2(\R^d)$).
Let $f \in D(P(D)^\delta)$ with $\delta > 2$ (e.g. if $|P(\xi)| \cong |\xi|^a$, then $f \in  H^{s}(\R^d)$ with $s > 2a$ suffices).
Then
\[ e^{itP(D)}f(x) \to f(x) \]
a.e. $x \in \R^d$ as $t \to 0+$.
\end{cor}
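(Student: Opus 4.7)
The proof is a direct application of Proposition \ref{prop-continuity-on-the-real-line-selfadjoint}. First, I would verify that $A = P(D)$ satisfies all the hypotheses of that proposition with $(\Omega,\mu) = (\R^d, dx)$. Since $P$ is real valued and positive, $A$ is a positive self-adjoint operator on $L^2(\R^d)$ by Plancherel, and its spectral resolution is nothing but the multiplication on the Fourier side by $P(\xi)$, so the functional calculus is given by $m(A) = \mathcal{F}^{-1} m(P(\cdot)) \mathcal{F}$. The semigroup $e^{-tA}$ is assumed to be lattice positive (or dominated by a positive contractive semigroup $S_t$), which is exactly the hypothesis of Proposition \ref{prop-continuity-on-the-real-line-selfadjoint}.

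Applying that proposition with $\delta > 2$ yields: for any $f \in D(A^\delta) = D(P(D)^\delta)$ and a.e. $x \in \R^d$, the function $t \mapsto e^{itP(D)}f(x)$ is continuous on $\R$. In particular, evaluated at $t_0 = 0$, this gives $e^{itP(D)}f(x) \to f(x)$ as $t \to 0+$ a.e.

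It remains to justify the parenthetical claim: if $|P(\xi)| \cong |\xi|^a$, then $H^s(\R^d) \subseteq D(P(D)^\delta)$ for some $\delta > 2$ whenever $s > 2a$. Indeed, by Plancherel,
\[ \|P(D)^\delta f\|_{L^2}^2 = \int_{\R^d} |P(\xi)|^{2\delta} |\widehat{f}(\xi)|^2 \,d\xi \lesssim \int_{\R^d} (1+|\xi|^2)^{a\delta} |\widehat{f}(\xi)|^2 \,d\xi \cong \|f\|_{H^{a\delta}}^2 , \]
so $H^{a\delta}(\R^d) \hookrightarrow D(P(D)^\delta)$. Pick $\delta \in (2, s/a)$, which is nonempty because $s > 2a$; then $f \in H^s(\R^d) \subseteq H^{a\delta}(\R^d) \subseteq D(P(D)^\delta)$, and the conclusion follows.

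There is no real obstacle here: the work has already been done in Proposition \ref{prop-continuity-on-the-real-line-selfadjoint}. The only point worth double-checking is the lattice positivity hypothesis via the two sufficient conditions mentioned in the discussion preceding the corollary (one-dimensional concave even $P$, or negative definite $P$ so that Schoenberg's and Bochner's theorems apply), but this is part of the \emph{assumption} of the corollary rather than something to be proved.
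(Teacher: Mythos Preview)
Your proof is correct and follows exactly the same approach as the paper, which simply says ``Apply Proposition \ref{prop-continuity-on-the-real-line-selfadjoint} directly.'' You have merely spelled out the verification of the hypotheses and the parenthetical Sobolev embedding claim, both of which are accurate.
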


\begin{proof}
Apply Proposition \ref{prop-continuity-on-the-real-line-selfadjoint} directly.
\end{proof}

In a third direction, the Schr\"odinger equation and the Carleson problem can also be studied on domains $\Omega \subseteq \R^d$.
The following result has been obtained in \cite[(1.4)]{Zhe}.

\begin{prop}\cite[Corollary 1.2]{Zhe}
Let $\Omega$ be the right half plane $\{(x,y) \in \R^2 : \: x > 0 \}$.
Let $\Delta_D = \partial_x^2 + (1+x)\partial_y^2$ together with Dirichlet boundary conditions on $\partial \Omega$.
Let $s > \frac12$ and $f \in H^s_D(\Omega)$, where the latter space is the completion of $C^\infty_c(\Omega)$ with respect to the norm $\norm{f}_{H^s_D(\Omega)}^2 = \norm{f}_{L^2(\Omega)}^2 + \norm{(- \Delta_D)^{\frac{s}{2}} f}_{L^2(\Omega)}^2$.
Then $\exp(it\Delta_D)f(x,y) \to f(x,y)$ as $t \to 0+$ for a.e. $(x,y) \in \Omega$.
\end{prop}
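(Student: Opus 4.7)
The plan is a two-step reduction: first to a local maximal inequality via Stein's standard maximal principle, then to an explicit oscillatory integral analysis exploiting the separable structure of $\Delta_D$. By Stein's principle, a.e. convergence $\exp(it\Delta_D)f(x,y) \to f(x,y)$ as $t \to 0+$ for all $f \in H^s_D(\Omega)$ will follow once one establishes (i) pointwise convergence on the dense class $C^\infty_c(\Omega) \subseteq H^s_D(\Omega)$, which is immediate since for $g \in C^\infty_c(\Omega)$ the curve $t \mapsto \exp(it\Delta_D)g$ is continuous into $L^2(\Omega)$ and into every $H^k_D(\Omega)$; together with (ii) the local maximal estimate
\[
\Bigl\| \sup_{0 < t < 1} |\exp(it\Delta_D)f| \Bigr\|_{L^2(K)} \leq C_{K,s}\, \|f\|_{H^s_D(\Omega)}
\]
for every compact $K \subset \Omega$ and every $s > \tfrac12$. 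The rest of the argument is devoted to (ii).

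To attack this maximal estimate I would diagonalize $-\Delta_D$ explicitly by separation of variables. After the partial Fourier transform $\mathcal{F}_y$ in $y$, the operator becomes $L_\eta = \partial_x^2 - (1+x)\eta^2$ on $L^2(0,\infty)$ with Dirichlet condition at $x=0$, which for each $\eta \neq 0$ has purely discrete positive spectrum (as the potential grows at $+\infty$). The eigenvalue equation $L_\eta \phi = -\lambda\phi$ becomes, after the substitution $z = |\eta|^{2/3}(1+x) - \lambda|\eta|^{-4/3}$, the Airy equation $\phi''(z) = z\phi(z)$, and the Dirichlet condition at $x=0$ selects the negative zeros $\{-a_j\}_{j \geq 1}$ of $\mathrm{Ai}$. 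This yields eigenvalues $\lambda_j(\eta) = \eta^2 + a_j |\eta|^{4/3}$ with $L^2$-normalized eigenfunctions $\phi_j^\eta(x) = c_j^\eta \mathrm{Ai}(|\eta|^{2/3} x - a_j)$, and the explicit representation
\[
\exp(it\Delta_D)f(x,y) = \sum_{j \geq 1} \int_{\R} e^{iy\eta}\, e^{-it\lambda_j(\eta)}\, \phi_j^\eta(x)\, \widetilde{f}_j(\eta)\, d\eta,
\]
where $\widetilde{f}_j(\eta) = \int_0^\infty \phi_j^\eta(x')\, (\mathcal{F}_y f)(x',\eta)\, dx'$, together with the norm equivalence $\|f\|_{H^s_D(\Omega)}^2 \cong \sum_j \int_{\R} (1 + \lambda_j(\eta))^s |\widetilde{f}_j(\eta)|^2\, d\eta$.

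The final and hardest step is to extract the sharp threshold $s > \tfrac12$ from this representation. The abstract H\"ormander-calculus machinery of the present paper, via Proposition \ref{prop-continuity-on-the-real-line-selfadjoint} applied to $A = -\Delta_D$, would only give $s > 4$ (because $A$ has degree $2$), and so is far too crude: it does not exploit any cancellation specific to $\Delta_D$. Instead, I would proceed by a $TT^*$/linearization argument, replacing $\sup_t$ by a measurable section $t = t(x,y) \in (0,1)$ and dualizing, which recasts the maximal estimate as an $L^2_{x,y}$ bound on a kernel whose phase is $y(\eta - \eta') - t(x,y)\lambda_j(\eta) + t(x',y')\lambda_{j'}(\eta')$. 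One then combines stationary phase in $\eta$ with the uniform Airy asymptotics $\mathrm{Ai}(-\tau) \sim \tau^{-1/4}\cos(\tfrac23 \tau^{3/2} - \pi/4)$ as $\tau \to +\infty$ and the Weyl law $a_j \sim (\tfrac{3\pi}{2}(j - \tfrac14))^{2/3}$ to reduce the problem to a one-dimensional Carleson-type estimate along the translation-invariant $y$-direction, where the sharp threshold for the free equation is $\tfrac14$. The main obstacle is quantitative: one must carry out this stationary phase analysis \emph{uniformly} in the Airy index $j$ and control the non-scale-invariant perturbation $a_j |\eta|^{4/3}$ of the dispersion $\lambda_j(\eta)$, which breaks the exact homogeneity on which the classical 1D Carleson estimate of \cite{Car} relies and is what forces the threshold to degrade from $\tfrac14$ to $\tfrac12$.
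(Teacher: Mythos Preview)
This proposition is not proved in the present paper; it is quoted verbatim from \cite[Corollary 1.2]{Zhe} as a point of comparison before the paper states its own (weaker but more general) Corollary \ref{cor-Zheng}. So there is no ``paper's own proof'' to compare against here. You yourself already observe this: the abstract H\"ormander-calculus machinery of the paper, via Proposition \ref{prop-continuity-on-the-real-line-selfadjoint}, would only yield $s > 4$ and cannot reach the sharp threshold $s > \tfrac12$.

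What you have written is therefore not a reconstruction of anything in the present paper, but an attempted sketch of Zheng's original argument (or something in its spirit). As a sketch it is plausible in outline --- the separation of variables leading to the Airy spectral decomposition, the reduction via $TT^*$ and linearization of the supremum, and the eventual appeal to a one-dimensional Carleson-type estimate are all natural moves for this operator. But the last paragraph is where all the actual content lies, and there you have only named the difficulties (uniformity in the Airy index $j$, the inhomogeneous perturbation $a_j|\eta|^{4/3}$ breaking scaling) rather than resolved them. Whether the threshold genuinely degrades from $\tfrac14$ to $\tfrac12$ for the reason you suggest, and how exactly the stationary phase is made uniform in $j$, would have to be checked against \cite{Zhe} itself; this paper does not address that question.
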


We obtain the following variant of this result.
Note that we are able to take quite general open (not necessarily convex) domains $\Omega$ and second order differential operators with Dirichlet boundary conditions, but have to take the larger exponent $\delta > 2$, which however is still dimension free (and thus beyond any Sobolev embedding argument for high dimensions).

\begin{cor}
\label{cor-Zheng}
Let $\Omega \subseteq \R^d$ be open non-empty equipped with Lebesgue measure.
Let $A$ be the unbounded selfadjoint operator on $L^2(\Omega)$ associated to the form $a : H^1_0(\Omega) \times H^1_0(\Omega) \to \C$
\[ a(u,v) = \int_\Omega \sum_{j,k = 1}^d a_{kj} \partial_k u \overline{\partial_j v} dx, \]
where $a_{kj} = a_{jk} : \Omega \to \R$ are bounded measurable such that the principal part is elliptic.
That is, there exists a constant $\eta > 0$ such that
\[ \sum_{j,k = 1}^d a_{kj}(x) \xi_j \overline{\xi_k} \geq \eta |\xi|^2 \quad (\xi \in \C^n,\: \text{a.e.} \: x \in \Omega) . \]
Let $\delta > 2$.
Then for any $f \in D(A^\delta)$, 
\[ \exp(itA)f(x) \to f(x)\]
as $t \to 0+$ for a.e. $x \in \Omega$.
\end{cor}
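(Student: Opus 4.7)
The plan is to reduce the corollary directly to Proposition \ref{prop-continuity-on-the-real-line-selfadjoint}, whose pointwise continuity conclusion (applied at $t=0$) is exactly the desired statement. To apply it, I need to check two things: (i) $A$ is a well-defined positive self-adjoint operator on $L^2(\Omega)$, and (ii) the semigroup $e^{-tA}$ is positivity preserving and contractive on $L^2(\Omega)$. Once these are in place, the inclusion $f \in D(A^\delta)$ with $\delta > 2$ feeds straight into Proposition \ref{prop-continuity-on-the-real-line-selfadjoint} and yields a.e.-continuity of $t \mapsto \exp(itA)f(x)$ on all of $\R$, which specialises to $\exp(itA)f(x) \to f(x)$ as $t \to 0+$.

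For (i) I would invoke the Kato representation theorem. The form $a$ is densely defined on $H^1_0(\Omega) \subseteq L^2(\Omega)$, symmetric because the $a_{jk} = a_{kj}$ are real, bounded by $\sup_{j,k}\|a_{jk}\|_\infty \|\nabla u\|_2 \|\nabla v\|_2$, and coercive since $a(u,u) \geq \eta \|\nabla u\|_2^2 \geq 0$ by the ellipticity assumption. Hence the associated operator $A$ is self-adjoint and non-negative on $L^2(\Omega)$, with $D(A^{1/2}) = H^1_0(\Omega)$. In particular $\exp(itA)$ makes sense as a unitary group on $L^2(\Omega)$ via the Borel functional calculus.

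For (ii), I would verify the first Beurling--Deny criterion by hand. For real-valued $u \in H^1_0(\Omega)$, one has $u^\pm \in H^1_0(\Omega)$ with $\partial_j u^+ = 1_{\{u > 0\}}\partial_j u$ and $\partial_j u^- = -1_{\{u<0\}}\partial_j u$, so these gradients have disjoint supports almost everywhere. Consequently
\[ a(u^+, u^-) = \int_\Omega \sum_{j,k=1}^d a_{jk}(x) \partial_k u^+(x) \, \partial_j u^-(x) \, dx = 0 \leq 0, \]
which is exactly the first Beurling--Deny criterion and hence (by the standard characterisation, e.g.\ Ouhabaz \cite{Ouh}) implies that $e^{-tA}$ is positivity preserving on $L^2(\Omega)$. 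Contractivity on $L^2(\Omega)$ is automatic since $A$ is non-negative self-adjoint: $\|e^{-tA}\|_{L^2 \to L^2} = \sup_{\lambda \geq 0} e^{-t\lambda} = 1$.

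Thus both hypotheses of Proposition \ref{prop-continuity-on-the-real-line-selfadjoint} hold, and the corollary follows. The main obstacle is purely cosmetic: there is no deep step beyond verifying the Beurling--Deny criterion, and the point of the corollary is really to demonstrate that Proposition \ref{prop-continuity-on-the-real-line-selfadjoint} produces dimension-free Carleson-type continuity results for general second-order divergence-form elliptic operators with Dirichlet boundary conditions on arbitrary open domains, without any regularity hypothesis on $\partial \Omega$ or on the coefficients $a_{jk}$ beyond boundedness and ellipticity.
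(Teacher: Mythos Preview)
Your proposal is correct and follows essentially the same approach as the paper: both reduce to Proposition \ref{prop-continuity-on-the-real-line-selfadjoint} by verifying that $A$ is positive self-adjoint and that $e^{-tA}$ is positivity preserving, with the paper citing \cite[Corollary 4.3]{Ouh} for the latter while you verify the first Beurling--Deny criterion directly. Your argument for $L^2$-contractivity via the spectral theorem is in fact more direct than the paper's route through $L^\infty$-contractivity and interpolation.
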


\begin{proof}
According to \cite[Corollary 4.3 p. 104]{Ouh}, $e^{-tA}$ is lattice positive.
Moreover, according to \cite[Corollary 4.12 p. 115]{Ouh}, $e^{-tA}$ is $L^\infty$ contractive, so by self-adjointness $L^1$ contractive and by complex interpolation $L^2$ contractive.
Thus, $A$ is positive self-adjoint, so that it suffices to apply Proposition \ref{prop-continuity-on-the-real-line-selfadjoint}.
\end{proof}

Note that in the same source \cite{Ouh}, one can find more general second order operators e.g. with lower order terms, or with other boundary conditions such as Neumann, or mixed boundary conditions, that still satisfy the assumptions and thus the conclusion of Proposition \ref{prop-continuity-on-the-real-line-selfadjoint}.
In other words, for such operators, Corollary \ref{cor-Zheng} also holds.

\section{Examples and concluding remarks}
\label{sec-examples}

In this section, we shortly discuss examples of $0$-sectorial operators $A$ and of UMD lattices $Y$, to illustrate our main results from Sections \ref{sec-q-variation} and \ref{sec-wave}.
We will then end with some concluding remarks and open questions related to the present article.

\paragraph{H\"ormander calculus for (generalised) Gaussian estimates}

For the first result below, we refer to \cite[Definitions 4.2 and 4.3]{DK} for the definition of a semigroup with Gaussian and generalised Gaussian estimates.
We recall the main theorem on H\"ormander functional calculus on $L^p(Y)$ for Gaussian estimates from \cite{DKK}.

\begin{defi}
Let $p \in (1,\infty),$ $p_Y \in (1,2]$ and $q_Y \in [2,\infty).$
We put 
\begin{equation}
\label{equ-defi-alpha}
\alpha(p,p_Y,q_Y) = \max\biggl(\frac{1}{p},\frac{1}{p_Y},\frac12 \biggr) - \min \biggl(\frac{1}{p},\frac{1}{q_Y},\frac12 \biggr) \in (0,1).
\end{equation}
Informally spoken, this is the length of the segment, which is the convex hull of the points $\frac{1}{p},\frac{1}{p_Y},\frac{1}{q_Y}$ and $\frac12$ sitting on the real line.
\end{defi}

\begin{thm}{\cite[Theorem 4.10]{DKK}}
\label{thm-Hoermander}
Let $(\Omega,\dist,\mu)$ be a space of homogeneous type with a dimension $d$.
Let $A$ be a self-adjoint operator on $L^2(\Omega)$ generating the semigroup $(T_t)_{t \geq 0}$.
Let $p_0 \in [1,2)$ and $m \in [2,\infty)$.
Assume that $(T_t)_{t \geq 0}$ satisfies Gaussian (resp. generalised Gaussian) estimates with parameter $m$ (resp. $p_0,m$).
Let $Y$ be a UMD lattice which is $p_Y$-convex and $q_Y$-concave for some $p_Y \in (1,2]$ (resp. $p_Y \in (p_0,2]$) and $q_Y \in [2,\infty)$ (resp. $q_Y \in [2,p_0')$).
Assume that the convexifications $Y^{p_Y}$ and $(Y')^{q_Y'}$ are also UMD lattices.
For example, if $Y = L^s(\Omega')$ for some $s \in (1,\infty)$, then any $p_Y \in (p_0,s)$ and any $q_Y \in (s,p_0')$ are admissible.
Finally, assume that $A$ has a bounded $\HI(\Sigma_\omega)$ calculus on $L^p(\Omega,Y)$ for some fixed $p \in (1,\infty)$ (resp. $p \in (p_0,p_0')$) and $\omega \in (0,\pi)$.

Then $A$ has a H\"ormander $\Hor^\beta_2$ calculus on $L^p(\Omega,Y)$ with
\[ \beta > \alpha(p,p_Y,q_Y) \cdot d + \frac12\]
and $\alpha$ from \eqref{equ-defi-alpha}.
\end{thm}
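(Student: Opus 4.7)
The plan is the standard two-step strategy for H\"ormander spectral multiplier theorems: first reduce to the case where $m$ is supported in the dyadic annulus $[\tfrac12,2]$, then prove the localised block bound
\[ \|m(A)\|_{L^p(Y)\to L^p(Y)} \lesssim \|m\|_{W^\beta_2} \qquad (\beta > \alpha(p,p_Y,q_Y)\,d + \tfrac12). \]
Reassembly of the dyadic blocks into a general $\Hor^\beta_2$ symbol uses the assumed bounded $\HI(\Sigma_\omega)$ calculus: Proposition \ref{prop-Hormander-calculus-to-R-Hormander-calculus} upgrades it to an $R$-bounded $\Hor^\gamma_2$ calculus on compactly supported pieces, and the Paley--Littlewood equivalence of Lemma \ref{lem-Hormander-calculus-Paley-Littlewood}, together with dilation invariance of the $\Hor^\beta_2$ norm and Kahane's contraction principle, recombines the pieces.

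For the block estimate itself, I would use the wave-operator representation of Lemma \ref{lem-representation-formula-wave-operators}. After the substitution $\lambda \mapsto \lambda^m$, setting $m_1(\lambda)=m(\lambda^m)$ which has compact support in $\R_+$, one obtains
\[ m_1(A^{1/m})f = \frac{1}{2\pi}\int_\R \hat{m_1}(s)\exp(isA^{1/m}) f\, ds, \]
so it suffices to show $\|\exp(isA^{1/m})\|_{L^p(Y)\to L^p(Y)} \lesssim (1+|s|)^{\gamma}$ for some $\gamma < \beta - \tfrac12$, the residual $\tfrac12$ being the Cauchy--Schwarz cost of passing from an $L^1_s$-integral of $\hat{m_1}$ to the $W^\beta_2$ norm of $m_1$. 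In the scalar case, the generalised Gaussian estimates yield Davies--Gaffney-type off-diagonal $L^{p_0} \to L^{p_0'}$ bounds for $\exp(-tA)$ that analytically continue to the imaginary axis with polynomial growth; summing annular contributions in space pays the doubling dimension $d$ once, and interpolating between $L^{p_0}\to L^{p_0'}$ and the $L^2$-isometry produces a loss $(1+|s|)^{\alpha\cdot d}$ with $\alpha$ equal to the length of the relevant interpolation segment on the real line.

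The vector-valued passage to $L^p(\Omega,Y)$ is handled by the $p_Y$-convexification and $q_Y$-concavification trick. The hypothesis that both $Y^{p_Y}$ and $(Y')^{q_Y'}$ are UMD lattices is essential here: one transfers the scalar off-diagonal bounds through $L^{p/p_Y}(Y^{p_Y})$ and a dual argument on the concave side, with lattice positivity of the heat kernel (inherent to Gaussian estimates) letting one pointwise dominate $\exp(-tA)\otimes \Id_Y$ acting on $f \in L^p(Y)$ by its scalar counterpart acting on $|f|$. Pushing the interpolation segment into the convex hull of $\{\tfrac1p,\tfrac1{p_Y},\tfrac1{q_Y},\tfrac12\}$ turns the scalar exponent into $\alpha(p,p_Y,q_Y)$.

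The hard part is precisely this vector-valued off-diagonal extrapolation: the scalar Blunck--Kunstmann / Duong--Ouhabaz--Sikora weak-type machinery has to be adapted to the UMD-lattice setting, which is why the hypothesis demands that both $Y^{p_Y}$ and $(Y')^{q_Y'}$ be UMD lattices (so that every intermediate space in the extrapolation supports vector-valued Hardy--Littlewood and Rademacher estimates), and why the assumption of bounded $\HI(\Sigma_\omega)$ calculus on $L^p(\Omega,Y)$ is indispensable as the starting $L^p(Y)$-boundedness input that replaces the elementary $L^p$-boundedness used in the scalar scheme.
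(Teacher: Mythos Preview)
The present paper does not prove this theorem at all; it is simply quoted from \cite[Theorem 4.10]{DKK} as a known result in the examples section, so there is no proof in the paper to compare your proposal against.

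That said, your outline is the right shape for the argument in \cite{DKK}, but two steps would not go through as written. First, invoking Proposition~\ref{prop-Hormander-calculus-to-R-Hormander-calculus} for the reassembly is circular: that proposition takes a H\"ormander calculus as \emph{input} and upgrades it to an $R$-bounded one, whereas here you are trying to establish a H\"ormander calculus in the first place. The correct reassembly device (used in \cite{DKK}) is the abstract machinery of \cite{KrW3}, which shows that uniform block estimates $\|m(A)\|\lesssim\|m\|_{W^\beta_2}$ for dyadically supported $m$, together with a bounded $\HI(\Sigma_\omega)$ calculus, already yield the full $\Hor^\beta_2$ calculus; the Paley--Littlewood equivalence in Lemma~\ref{lem-Hormander-calculus-Paley-Littlewood} likewise presupposes a H\"ormander calculus and cannot be used to bootstrap one.

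Second, the sentence ``lattice positivity of the heat kernel (inherent to Gaussian estimates) letting one pointwise dominate $\exp(-tA)\otimes\Id_Y$'' is not available in the generalised Gaussian setting: generalised Gaussian estimates are $L^{p_0}\to L^{p_0'}$ off-diagonal norm bounds and carry no pointwise positivity. The vector-valued passage in \cite{DKK} is not by domination but by pushing the off-diagonal estimates themselves through the convexification $Y^{p_Y}$ and the dual concavification, using that these are UMD lattices; this is precisely why the hypothesis on $Y^{p_Y}$ and $(Y')^{q_Y'}$ is there. Your description of the interpolation segment producing the exponent $\alpha(p,p_Y,q_Y)\cdot d$ is, however, correct.
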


Theorem \ref{thm-Hoermander} applies to a wide range of differential operators in different contexts, including those listed in the Introduction Section \ref{sec-Introduction}.
For a discussion of many further examples where Gaussian estimates are satisfied, we refer to \cite[Section 7]{DuOS}, see also \cite[Subsection 5.1]{DKK} and \cite[Subsection 4.1]{DK}.

Note also that our semigroups do not need to be self-adjoint on $L^2(\Omega)$ (if ever $p = 2$).
Indeed, suppose that the semigroup $(T_t)_{t \geq 0}$ consists of self-adjoint contractions on $L^2(\Omega,\mu)$, where $\Omega$ is a space of homogeneous type, with dimension $d \in \N$.
Then according to \cite[Theorem 3.2]{DSY}, there exists some $r_0 \in [1,2)$ such that for any $p \in (r_0,\infty)$, any $w \in A_{\frac{p}{r_0}}$ (Muckenhoupt weight class \cite[p.~1109]{DSY}) and $s > \frac{d+1}{2}$, the generator $A$ of $(T_t)_{t \geq 0}$ has a H\"ormander calculus on $L^p(\Omega,w d \mu)$.
Picking $p = 2$, the semigroup $(T_t)_{t \geq 0}$ is thus uniformly bounded on $L^2(\Omega,w d\mu)$, but if $w \in A_{\frac{2}{r_0}}$ is not constant, the semigroup $(T_t)_{t \geq 0}$ and the generator $A$ are not self-adjoint on this weighted $L^2$ space.

\paragraph{Examples of UMD lattices}

Apart from $L^q(\Omega')$ spaces for $1 < q < \infty$ which are the major examples of UMD lattices, one can also consider their intersections and sums when seen as subspaces of the common superspace of (equivalence classes of) measurable functions over $\Omega'$.

\begin{lemma}
\label{lem-intersection-sum-of-Lq}
Let $1 < q_1,q_2 < \infty$.
Let $\Omega'$ be a $\sigma$-finite measure space.
\begin{enumerate}
\item Then $Y = L^{q_1}(\Omega') \cap L^{q_2}(\Omega')$ equipped with the norm $\norm{f}_Y = \max\{\norm{f}_{q_1},\norm{f}_{q_2}\}$ and obvious partial order is a UMD lattice.
\item Then $Y = L^{q_1}(\Omega') + L^{q_2}(\Omega')$ equipped with the norm $\norm{f}_Y = \inf\{ \norm{g}_{q_1} + \norm{h}_{q_2} : \: f = g + h \}$ and obvious partial order is also a UMD lattice.
\end{enumerate}
\end{lemma}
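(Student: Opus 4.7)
The plan is to exhibit both spaces as simple structural constructions on the direct sum $L^{q_1}(\Omega') \oplus L^{q_2}(\Omega')$, so that the UMD property comes for free and only the Banach lattice structure needs to be verified by hand. Specifically, $L^{q_1} \cap L^{q_2}$ with the max norm is isometric to the closed subspace $\{(f,f) : f \in L^{q_1} \cap L^{q_2}\}$ of $L^{q_1} \oplus_\infty L^{q_2}$ via the diagonal embedding, while $L^{q_1} + L^{q_2}$ with the infimum norm is by definition the quotient of $L^{q_1} \oplus_1 L^{q_2}$ by the kernel of the addition map $(f_1,f_2) \mapsto f_1 + f_2$. Since $L^{q_1}$ and $L^{q_2}$ are UMD for $q_1,q_2 \in (1,\infty)$, their direct sum is UMD, and the UMD property passes both to closed subspaces and to quotients (e.g.\ via the martingale characterization). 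This yields the UMD property in both cases at once.

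For the Banach lattice structure of the intersection, everything is straightforward: the ideal property and monotonicity of the norm follow pointwise from the corresponding properties of each $L^{q_i}$, while completeness is inherited from that of $L^{q_1} \oplus_\infty L^{q_2}$ via the closed subspace embedding above.

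The main (and only) nontrivial step is the lattice ideal property of the sum. Given $|f| \leq |g|$ a.e.\ with $g \in L^{q_1}+L^{q_2}$, I would pick any decomposition $g = g_1 + g_2$ with $g_i \in L^{q_i}$ and define
\[ f_i(x) = f(x) \cdot \frac{|g_i(x)|}{|g_1(x)|+|g_2(x)|} \quad (i = 1,2), \]
with the convention $0/0 = 0$. Then $f_1 + f_2 = f$ pointwise and $|f_i| \leq |g_i|$, so $f_i \in L^{q_i}$ with $\|f_i\|_{q_i} \leq \|g_i\|_{q_i}$. Taking the infimum over all decompositions of $g$ yields $\|f\|_Y \leq \|g\|_Y$, which simultaneously shows that $f \in L^{q_1}+L^{q_2}$. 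Completeness of the sum then comes from its identification as a quotient of a Banach space.

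Finally, the remaining K\"othe function space axioms require only brief checks: indicator functions of sets of finite measure lie in $L^{q_1} \cap L^{q_2}$, hence a fortiori in both spaces under consideration. The $\sigma$-Fatou property is automatic because a UMD space is reflexive and any reflexive Banach lattice has the Fatou property, exactly as invoked in Lemma~\ref{lem-UMD-lattice-Fatou}. The hardest part of the whole argument is really just the convex-combination decomposition in the sum case; everything else is formal.
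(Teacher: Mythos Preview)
Your argument is correct. The paper itself does not give a proof but simply refers to \cite[Lemma 4.9]{DK}, so there is no in-paper argument to compare against; your proof is exactly the standard one and is almost certainly what is carried out in that reference: UMD via the diagonal embedding into $L^{q_1}\oplus_\infty L^{q_2}$ (closed subspace) and via the quotient of $L^{q_1}\oplus_1 L^{q_2}$ by the kernel of the addition map, together with the convex-combination trick $f_i = f\cdot |g_i|/(|g_1|+|g_2|)$ for the lattice ideal property of the sum. Nothing further is needed.
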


For a simple proof, we refer to \cite[Lemma 4.9]{DK}.

\paragraph{Concluding remarks}

Already for classical (i.e. without $q$-variation or supremum in $t > 0$) H\"ormander multiplier theorems, a nice description of the exact norm $\|m(A)\|_{L^p \to L^p}$ in terms of a function norm $\|m\|$ of the spectral multiplier is not known today.
This problem is equally present for our $q$-variational and maximal spectral multiplier estimates in this article and only a step by step progression of sufficient conditions in the form $\|t \mapsto m(tA)(\cdot)\|_{L^p \to L^p(V^q)} \lesssim \|m\|_{\ldots}$ seems to be manageable.
In this direction, it would be interesting to know whether in the contexts of Theorem \ref{thm-q-variation} 2. ($q$-variation) and of Theorem \ref{thm-main-ft} 2. (maximal estimate with time-dependent $f_t$) of semigroup generators, one can relax the summation condition to
\[ \sum_{n \in \Z} \frac{\|m(2^n\cdot)\dyad_0\|_{W^c_2(\R)}}{1 + |n|} < \infty \]
as for maximal estimates in the euclidean case \cite{CGHS}, or with an additional factor $\log(|n|+2)$ as in \cite{Choi}.
As another possible relaxation, one can ask the question, whether
\[ \sum_{n \in \Z} \|m(2^n \cdot)\dyad_0\|_{W^c_q(\R)}^2 < \infty \]
is sufficient for our estimates, which is known to be true for the classical maximal estimate of the euclidean Laplacian \cite[(1.3)]{CGHS}.
For this, it would be interesting to know whether there are other approaches to $q$-variational and time dependent maximal estimates than ours, in case of the euclidean Laplacian or sub-Laplacians on Lie groups.
Also $q$-variational estimates for spectral multipliers that do not decay at $\infty$ are not well understood.
Already the scalar case $Y = \C$ would be interesting.

In the context of Section \ref{sec-q-variation-spherical}, it would be interesting to get information on the norm $C_{p,q,d,Y}$ in Theorem \ref{thm-spherical-means} and on $C_{p,q,Y}$ in Proposition \ref{prop-HHL-alternative}, depending on $p,q(,d)$ and $Y$.
Also one could try to find a proof of Proposition \ref{prop-HHL-alternative} using H\"ormander functional calculus that also covers small, so finally all, dimensions.

\section{Acknowledgments}

The authors acknowledge financial support through the research program ANR-18-CE40-0021 (project HASCON).
They also respectively acknowledge financial support through the research program ANR-18-CE40-0035  (project REPKA) and ANR-17-CE40-0021 (project FRONT).

\vspace{0.2cm}

\footnotesize{
\noindent Luc Deleaval \\
\noindent
Laboratoire d'Analyse et de  Math\'ematiques Appliqu\'ees (UMR 8050)\\
Universit\'e Gustave Eiffel\\
5, Boulevard Descartes, Champs sur Marne\\
77454 Marne la Vall\'ee Cedex 2\\
luc.deleaval@univ-eiffel.fr\hskip.3cm
}

\vspace{0.2cm}
\footnotesize{
\noindent Christoph Kriegler\\
\noindent
Universit\'e Clermont Auvergne,\\
CNRS,\\
LMBP,\\
F-63000 CLERMONT-FERRAND,\\
FRANCE \\
URL: \href{http://lmbp.uca.fr/~kriegler/indexenglish.html}{http://lmbp.uca.fr/{\raise.17ex\hbox{$\scriptstyle\sim$}}\hspace{-0.1cm} kriegler/indexenglish.html}\\
christoph.kriegler@uca.fr\hskip.3cm
}
\end{document}